\newtheorem{theorem}{Theorem}[section]
\newtheorem{lemma}[theorem]{Lemma}
\newtheorem{proposition}[theorem]{Proposition}
\newtheorem{corollary}[theorem]{Corollary}
\newtheorem{conjecture}[theorem]{Conjecture}
\theoremstyle{definition}
\newtheorem{definition}[theorem]{Definition}
\newtheorem{remark}[theorem]{Remark}
\numberwithin{equation}{section}
\newcommand{\CC}{\mathbb C}
\newcommand{\HH}{\mathbb H}
\newcommand{\NN}{\mathbb N}
\newcommand{\PP}{\mathbb P}
\newcommand{\QQ}{\mathbb Q}
\newcommand{\RR}{\mathbb R}
\newcommand{\ZZ}{\mathbb Z}
\newcommand{\cD}{\mathcal D}
\newcommand{\cH}{\mathcal H}
\newcommand{\SL}{\mathop{\mathrm {SL}}\nolimits}
\newcommand{\Orth}{\mathop{\null\mathrm {O}}\nolimits}
\newcommand{\sign}{\mathop{\mathrm {sign}}\nolimits}
\newcommand{\Mp}{\mathop{\mathrm {Mp}}\nolimits}
\newcommand{\rank}{\mathop{\mathrm {rank}}\nolimits}
\newcommand{\latt}[1]{{\langle{#1}\rangle}}
\newcommand{\Grit}{\operatorname{Grit}}
\newcommand{\Borch}{\operatorname{Borch}}
\def\div{\operatorname{div}}
\newcommand{\m}{\operatorname{mod}}
\newcommand{\im}{\operatorname{Im}}
\newcommand{\mult}{\operatorname{mult}}
\newcommand{\norm}{\operatorname{Norm}}
\newcommand{\sing}{\operatorname{sing}}
\newcommand{\II}{\operatorname{II}}
\newcommand{\abs}[1]{\lvert#1\rvert}
\begin{document}

\title[The classification of 2-reflective modular forms]{The classification of 2-reflective modular forms}

\author{Haowu Wang}

\address{Center for Geometry and Physics, Institute for Basic Science (IBS), Pohang 37673, Korea}

\email{haowu.wangmath@gmail.com}

\subjclass[2010]{Primary 11F03, 11F50, 11F55; Secondary 17B67, 51F15, 14J28}

\date{\today}

\keywords{Jacobi forms, reflective modular forms, Borcherds products, hyperbolic reflective lattices, hyperbolic reflection groups}

\begin{abstract}
The classification of reflective modular forms is an important problem in the theory of automorphic forms on orthogonal groups. In this paper, we develop an approach based on the theory of Jacobi forms to give a full classification of 2-reflective modular forms. We prove that there are only 3 lattices of signature $(2,n)$ having 2-reflective modular forms when $n\geq 14$. We show that there are exactly 51 lattices of type $2U\oplus L(-1)$ which admit 2-reflective modular forms and satisfy that $L$ has 2-roots. We further determine all 2-reflective modular forms giving arithmetic hyperbolic 2-reflection groups.  This is the first attempt to classify reflective modular forms on lattices of arbitrary level. 
\end{abstract}

\maketitle

\section{Introduction}
A reflective modular form is a holomorphic modular form on an orthogonal group of signature $(2,n)$ whose divisor is a union of rational quadratic divisors associated to some roots of the lattice. The 2-reflective modular forms are the simplest reflective modular forms whose divisor are determined by vectors of norm $-2$, and they have the geometric interpretation as automorphic discriminants of moduli of K3 surfaces (see \cite{Nik96, BKPS98, GN17}).  Reflective modular forms are usually Borcherds products of some vector-valued modular forms (see \cite{Bru02, Bru14}). The Igusa form $\Delta_{10}$, namely the first cusp form for the Siegel modular group of genus $2$, is the first reflective modular form (see \cite{GN97}). The Borcherds form $\Phi_{12}$ for $\II_{2,26}$, i.e. the even unimodular lattice of signature $(2,26)$, is the last reflective modular form (see \cite{Bor95}).
 
Reflective modular forms are of great importance. Such modular forms play a vital role in classifying interesting Lorentzian Kac--Moody algebras, as their denominator identities are usually reflective modular forms (see  \cite{GN98a, GN98b, GN02, GN18, Sch04, Sch06, Bar03}). This type of modular forms also has applications in algebraic geometry, as the existence of a particular reflective modular form  determines the Kodaira dimension of the corresponding modular variety (see \cite{GHS07, GH14, Ma18}). In addition, reflective modular forms  are beneficial to the research of hyperbolic reflection groups and hyperbolic reflective lattices (see \cite{Bor00, Bar03}), as the existence of a reflective modular form with a Weyl vector of positive norm implies that the hyperbolic lattice is reflective (see \cite{Bor98}). This means that the subgroup generated by reflections has finite index in the integral orthogonal group of the lattice. Recently, in joint work with Gritsenko, we use the pull-backs of certain reflective modular forms of singular weight to build infinite families of remarkable Siegel paramodular forms of weights $2$ and $3$ (see \cite{GW17, GW18, GW19}). Besides, the first Fourier--Jacobi coefficients of reflective modular forms give interesting holomorphic Jacobi forms as theta blocks (see \cite{GSZ18, Gri18}). 

The classification of reflective modular forms is an open problem since 1998 when Gritsenko and Nikulin \cite{GN98a} first conjectured that the number of lattices having reflective modular forms is finite up to scaling. In the past two decades, some progress has been made on this problem. Borcherds \cite{Bor00} constructed many interesting reflective modular forms related to extraordinary hyperbolic groups as Borcherds products of weakly holomorphic modular forms on congruence subgroups. Gritsenko and Nikulin \cite{GN02} classified reflective modular forms of signature $(2,3)$ by means of the classification of hyperbolic reflective lattices. Scheithauer classified some special reflective modular forms with norm 0 Weyl vectors. More precisely, based on the theory of vector-valued modular forms, he found a necessary condition for the existence of a reflective form in \cite{Sch06}.  Using this condition, the classification of strongly reflective modular forms of singular weight (i.e. minimal weight $n/2-1$) on lattices of squarefree level is almost completed (see \cite{Sch06, Sch17, Dit18}). From an algebraic geometry approach, Ma derived the finiteness of lattices admitting  $2$-reflective modular forms and reflective modular forms of bounded vanishing order, which partly proved the conjecture of Gritsenko and Nikulin (see \cite{Ma17, Ma18}).

Scheithauer's condition is very hard to use when the lattice is not of squarefree level because in this case the Fourier coefficients of vector-valued Eisenstein series are very complicated and it is difficult to characterize the discriminant form of the lattice.  Ma's approach is ineffective to give the list of reflective lattices because his estimate is rather rough. There is no effective way to classify reflective modular forms on general lattices.  The purpose of this paper is to give a novel way to classify 2-reflective modular forms on lattices of arbitrary level.

Our method is based on the theory of Jacobi forms of lattice index (see \cite{EZ85, CG13}). We know from \cite{Bru14} that every reflective modular form on a lattice of type $U\oplus U(m)\oplus L$ is a Borcherds product of a suitable vector-valued modular form. Thus the existence of a reflective modular form is determined by the existence of a certain vector-valued modular form. In view of the isomorphism between vector-valued modular forms and Jacobi forms, we can use Jacobi forms to study reflective modular forms. In some sense,  Jacobi forms are more powerful than vector-valued modular forms. We can take the product and tensor product of different Jacobi forms. We can also consider pull-backs of Jacobi forms from a certain lattice to its sublattices. There are the Hecke type operators to raise the index of Jacobi forms and the differential operators to raise the weight of Jacobi forms. The structure of the space of Jacobi forms for some familiar lattices was known (see \cite{Wir92} for the case of root systems). Besides, we usually focus on the genus of a lattice when we use vector-valued modular forms. But we will see all the faces of a reflective modular form when we work with Jacobi forms, because there are different Jacobi forms on the expansions of an orthogonal modular form at different one-dimensional cusps. For example, the Borcherds form $\Phi_{12}$, which defines the denominator function of the fake monster algebra (see \cite{Bor90}), is constructed as the Borcherds product of $\Delta^{-1}$, where $\Delta$ is the Ramanujan Delta function
\begin{equation}\label{eq:Delta}
\Delta(\tau) = q\prod_{n=1}^\infty (1-q^n)^{24}, \quad q=e^{2\pi i\tau}, \; \tau \in \HH.  \end{equation}
But in the context of Jacobi forms, there are $24$ different constructions of this modular form corresponding to $24$ classes of positive-definite even unimodular lattices of rank $24$ (see \cite{Gri18}).

In our previous work \cite{Wan18}, we proved the nonexistence of $2$-reflective and reflective modular forms on lattices of large rank by constructing certain holomorphic Jacobi forms of small weights using differential operators. In particular, we showed that the only $2$-reflective lattices of signature $(2,n)$ satisfying $n\geq 15$ and $n\neq 19$ are $\II_{2,18}$ and $\II_{2,26}$. Here, a lattice having a $2$-reflective modular form is called $2$-reflective. In this paper, we prove the following stronger result.

\begin{theorem}\label{th:non12}
Let $M$ be a $2$-reflective lattice of signature $(2,n)$ with $n\geq 14$. Then it is isomorphic to $\II_{2,18}$, or $2U\oplus 2E_8(-1)\oplus A_1(-1)$, or $\II_{2,26}$.
\end{theorem}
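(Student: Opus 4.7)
The plan is to combine the classification from \cite{Wan18}, which already handles $n\geq 15$ with $n\neq 19$, with a refined Jacobi-form analysis covering the two remaining ranks $n=14$ and $n=19$. First I would reduce to $M\cong 2U\oplus L(-1)$ with $L$ positive-definite of rank $n-2$: for a $2$-reflective lattice of signature $(2,n)$ with $n$ this large, standard arguments about even hyperbolic lattices combined with the existence of a reflective modular form having a Weyl vector of positive norm force both copies of $U$ to split off. Fixing a $1$-dimensional cusp of $2U$, any $2$-reflective form $F$ of weight $k$ on $M$ has a first Fourier-Jacobi coefficient $\phi\in J_{k,L}$, a holomorphic Jacobi form of weight $k$ and index $L$, whose theta decomposition is rigidly controlled by $2$-reflectivity: the non-cuspidal part of $\phi$ is supported on the $(-2)$-vectors of $L$.

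Next I would apply the Cohen-Eichler-Zagier heat operators to $\phi$, as in \cite{Wan18}, to produce non-vanishing holomorphic Jacobi forms of higher weight. Combined with the Gritsenko-Nikulin reflective equation relating the weight of $F$ to the multiplicities of the reflection divisors, this yields a sharp inequality between the weight, the rank of $L$, and the number of $(-2)$-roots of $L$. For $n=19$ this should leave $L\cong 2E_8\oplus A_1$ as the only possibility, and the existence of a $2$-reflective form on $2U\oplus 2E_8(-1)\oplus A_1(-1)$ then follows either from a quasi-pullback of $\Phi_{12}$ along the embedding $2U\oplus 2E_8(-1)\oplus A_1(-1)\emb\II_{2,26}$ or from a Borcherds product of a suitable vector-valued nearly holomorphic modular form. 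For $n=14$ the inequality is not yet sharp enough on its own, so I would enumerate the even positive-definite lattices of rank $12$ whose $(-2)$-root system can support the required theta-decomposition of $\phi$ — a finite list of root lattices and their overlattices — and rule each one out by showing the corresponding candidate Jacobi form cannot be completed to a holomorphic one, via an obstruction functional on the space of weight-$(12-k)$ Jacobi cusp forms whose non-vanishing is checked from the explicit structure theorems of \cite{Wir92}.

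The main obstacle will be the rank-$12$ enumeration for $n=14$: the Jacobi-form weight inequality alone does not exclude $2$-reflectivity, so one has to pass through a finite but substantial case analysis of candidate root systems of $L$, invoking for each candidate a non-vanishing result on the obstruction space; keeping track of the overlattices of a given root system (so as not to miss a lattice whose Jacobi forms differ from those of its root sublattice) is the delicate point. By contrast, the $n=19$ case should follow fairly directly from the same heat-operator techniques as in \cite{Wan18}, once the reflective equation is combined with the sharpened weight bound and the short root $A_1$ is identified from the congruence conditions on the multiplicities of the reflection divisors.
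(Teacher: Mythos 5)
Your overall skeleton agrees with the paper: Theorem \ref{th:non12} is obtained by combining \cite[Theorem 3.8]{Wan18} (which settles $n\geq 15$, $n\neq 19$) with two new statements, nonexistence in signature $(2,14)$ (Theorem \ref{th:nonsign12}) and the identification of $2U\oplus 2E_8(-1)\oplus A_1(-1)$ in signature $(2,19)$ (Theorem \ref{th:sign19}). However, several of your steps would fail as stated. The reduction to $M\cong 2U\oplus L(-1)$ cannot be run through ``a reflective modular form having a Weyl vector of positive norm'': $2$-reflective forms of Leech type have Weyl vectors of norm zero (Theorem \ref{th:main2reflective}(b)), so no such vector is available in general. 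The paper instead passes to a maximal even overlattice using Lemma \ref{Lem:reductionMa} and splits off $2U$ via Nikulin's criterion (Lemma \ref{lem:2U}); for $n=19$ it must then argue separately, by a chain of overlattices, that a non-maximal $M$ is still isomorphic to $2U\oplus 2E_8(-1)\oplus A_1(-1)$. Also, the object to which the heat operators are applied is the weakly holomorphic Jacobi form of weight $0$ and index $L$ arising as the Borcherds input of $F$ (via Bruinier's converse theorem), not the holomorphic first Fourier--Jacobi coefficient of weight $k$; starting from weight $k$ you cannot reach the low singular weights where the obstruction lives.

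The more serious gap is in your plan for $n=14$. Enumerating rank-$12$ lattices by their $(-2)$-root systems cannot address the case $R(L)=\emptyset$, and that Leech-type case is precisely the one the paper is unable to classify in general (part (b) of Theorem \ref{th:main2reflective} is left open); moreover \cite{Wir92} gives structure theorems only for root lattices, not for arbitrary even positive-definite lattices of rank $12$. The paper's proof of Theorem \ref{th:nonsign12} avoids any enumeration: after reducing to a maximal $L$, heat operators produce a holomorphic Jacobi form $\varphi_6$ of singular weight $6$ with constant term $1$; singular weight forces $\varphi_6$ to be a combination of theta functions, maximality of $L$ eliminates every $q^1$-contribution except the one from $\Theta_0^L$, whose $q^1$-coefficient is a non-negative count of $2$-roots, yet $\varphi_6(\tau,0)=E_6=1-504q+\cdots$ --- a contradiction valid whether or not $L$ has roots. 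Conversely, you underestimate $n=19$: the heat-operator weight bound was already available in \cite{Wan18} (it gives weight $75\beta_0$) and did not resolve this rank. The decisive new input is the $q^0$-term identity of Lemma \ref{Lem:q^0-term}, which forces $R(L)$ to have full rank $17$ with all non-$A_1$ components of equal Coxeter number (Theorem \ref{th:2-reflective}); this narrows $R(L)$ to $2E_8\oplus A_1$, $D_{16}\oplus A_1$, $A_{17}$, $D_{10}\oplus E_7$, and the exclusion of $A_{17}$ requires a genuinely separate argument comparing the hypothetical input form with the pullback from $N(A_{17}\oplus E_7)$ and invoking Wirthm\"uller's structure theorem for index-one weak Jacobi forms on $A_{17}$. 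None of these ingredients appears in your sketch.
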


We have mentioned that there is a relation between hyperbolic $2$-reflective lattices and $2$-reflective modular forms. The full classification of hyperbolic $2$-reflective lattices was known due to the work of Nikulin and Vinberg \cite{Nik81, Nik84, Vin07}. Vinberg \cite{Vin72} proved that if $U\oplus L(-1)$ is a hyperbolic $2$-reflective lattice then the set of 2-roots of each lattice in the genus of $L$ generates the whole space $L\otimes \RR$.
In this paper, we prove an analogue of Vinberg's result (see Theorem \ref{th:2-reflective}) and use it to give a complete classification of 2-reflective lattices.

\begin{theorem}\label{th:main2reflective}
There are only three types of $2$-reflective lattices containing two hyperbolic planes:
\begin{itemize}
\item[(a)] $\II_{2,26}$;
\item[(b)] $2U\oplus L(-1):$ every lattice in the genus of $L$ has no $2$-roots. In this case, every $2$-reflective modular form has a Weyl vector of norm zero and has weight $12\beta_0$, where $\beta_0$ is the multiplicity of the principal Heegner divisor $\cH_0$ defined by \eqref{eq:H0};
\item[(c)] $2U\oplus L(-1):$  every lattice in the genus of $L$ has $2$-roots and the $2$-roots generate a sublattice of the same rank as $L$. In this case, $L$ is in the genus of one of the following $50$ lattices
\[
\begin{array}{l|l}
n & L \\
\hline 
3 &A_1\\
4 &2A_1, \quad A_2\\
5 &3A_1,\quad A_1\oplus A_2,\quad A_3\\
6 &4A_1,\quad 2A_1\oplus A_2,\quad A_1\oplus A_3,\quad A_4,\quad D_4,\quad 2A_2\\
7 &5A_1,\quad 2A_1\oplus A_3,\quad A_1\oplus 2A_2,\quad A_1\oplus A_4,\quad A_1\oplus D_4,\quad A_5,\quad D_5\\
8 &6A_1,\quad 2A_1\oplus D_4,\quad A_1\oplus A_5,\quad A_1\oplus D_5,\quad E_6, \quad 3A_2,  \quad 2A_3,\quad  A_6,\quad D_6\\
9 &7A_1,\quad 3A_1\oplus D_4,\quad A_1\oplus D_6,\quad A_1\oplus E_6,\quad E_7,\quad A_7,\quad D_7\\
10 &8A_1,\quad 4A_1\oplus D_4,\quad 2A_1\oplus D_6,\quad A_1\oplus E_7,\quad E_8,\quad 2D_4,\quad D_8,\quad N_8\\
11 &5A_1\oplus D_4,\quad A_1\oplus 2D_4,\quad A_1\oplus D_8,\quad A_1\oplus E_8\\
12 &2A_1\oplus E_8\\
18 &2E_8\\
19 &2E_8\oplus A_1
\end{array} 
\]
Note that $5A_1\oplus D_4$ and $A_1\oplus N_8$, $A_1\oplus 2D_4$ and $3A_1\oplus D_6$, $A_1\oplus D_8$ and $2A_1\oplus E_7$,  $2A_1\oplus E_8$ and $D_{10}$ are in the same genus, respectively.  Here, $N_8$ is the Nikulin lattice defined as \eqref{eq:Nikulin lattice}.
Moreover, every lattice has a $2$-reflective modular form with a positive norm Weyl vector. Thus, every associated Lorentzian lattice $U\oplus L(-1)$ is hyperbolic $2$-reflective.
\end{itemize}
\end{theorem}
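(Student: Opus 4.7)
The plan is to separate the lattice $\II_{2,26}$ from the lattices of the form $2U\oplus L(-1)$, then exploit the Jacobi-form dictionary. For $\II_{2,26}$, Borcherds' form $\Phi_{12}$ supplies existence; the uniqueness of this case at large rank (and in fact the reduction $\rank L \leq 13$ for cases (b) and (c)) comes from Theorem \ref{th:non12}, together with the observation that the two exceptional high-rank lattices $\II_{2,18}$ and $2U\oplus 2E_8(-1)\oplus A_1(-1)$ correspond to the entries $L = 2E_8$ and $L = 2E_8\oplus A_1$ in the table. So the substantive work is on lattices $2U\oplus L(-1)$ with $\rank L \leq 13$.

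For such lattices I would proceed as follows. By Bruinier's theorem on lattices containing $U\oplus U(m)$, any $2$-reflective modular form $F$ on $2U\oplus L(-1)$ is a Borcherds product $\Borch(\Psi)$ of a weakly holomorphic vector-valued modular form $\Psi$ for the discriminant form of $L$. Under the Eichler--Zagier-type isomorphism of \cite{EZ85, CG13} extended to arbitrary lattice index, $\Psi$ corresponds to a weakly holomorphic Jacobi form $\phi_F$ of lattice index $L(-1)$ whose principal part is dictated by the divisor of $F$. Being $2$-reflective, the principal part of $\phi_F$ consists only of the $q^{-1}$ constant term (with some multiplicity $\beta_0$, corresponding to the principal Heegner divisor $\cH_0$) and of terms indexed by $2$-roots of $L$, yielding the natural dichotomy of (b) versus (c).

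In case (b), no $2$-roots appear in $\phi_F$, so $\phi_F = \beta_0 q^{-1}+\cdots$ has no elliptic singularities at torsion points. Reading off the constant term at each one-dimensional cusp shows the Weyl vector has norm zero, placing $F$ at singular weight $n/2-1$, and the weight is identified as $12\beta_0$ from the Eisenstein-type contribution. In case (c), some $2$-root of $L$ occurs in $\phi_F$; here I would prove the paper's Vinberg-type analogue (Theorem \ref{th:2-reflective}) by pulling $\phi_F$ back to sublattices generated by $2$-roots and exploiting $\Orth(L)$-invariance, concluding that the $2$-roots span $L\otimes\RR$ in every lattice of the genus. This places $L$ among the hyperbolic $2$-reflective lattices classified by Nikulin and Vinberg; intersecting that list with $\rank L \leq 13$ produces a finite candidate set that must be narrowed to the $50$ lattices in the statement.

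The final stage splits into existence and non-existence. For each of the $50$ listed lattices I would construct an explicit $2$-reflective Borcherds product, either directly as a theta block of small weight or as a pullback of a known reflective form on a higher-rank lattice such as $\II_{2,26}$; the norm of the Weyl vector in each case will automatically come out positive, giving the hyperbolic reflectivity of $U\oplus L(-1)$. The main obstacle is the non-existence half: several candidates surviving the Nikulin--Vinberg filter must still be excluded, and for each offending $L$ one needs an obstructing weakly holomorphic Jacobi form whose existence contradicts dimension or positivity constraints --- in the spirit of \cite{Wan18}, but considerably more delicate because the level of $L$ is arbitrary and the discriminant form need not be of squarefree level, so Scheithauer's Eisenstein-series method is not directly available and one must work with tailor-made Jacobi-form identities on each candidate.
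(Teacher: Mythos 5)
Your overall architecture (reduce to $2U\oplus L(-1)$ of bounded rank via Theorem \ref{th:non12}, pass to a weight-$0$ Jacobi form whose singular part encodes the divisor, split into cases (b)/(c) according to the presence of $2$-roots, realize the surviving cases as quasi pull-backs of $\Phi_{12}$, and kill the rest with obstructing Jacobi forms) matches the paper's. But there is a genuine gap at the step that produces your finite candidate list: you argue that because the $2$-roots span $L\otimes\RR$, this ``places $L$ among the hyperbolic $2$-reflective lattices classified by Nikulin and Vinberg.'' That implication is false. Vinberg's theorem goes the other way (hyperbolic $2$-reflectivity of $U\oplus L(-1)$ implies the $2$-roots span), and spanning is far from sufficient. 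The only bridge from a $2$-reflective modular form to hyperbolic $2$-reflectivity is Borcherds' criterion, which requires a Weyl vector of \emph{positive} norm; you cannot assume that a priori, the remark in \S\ref{sec:autocorrection} shows positivity holds only for ``almost all'' candidates, and checking it already presupposes knowledge of the divisor structure. The paper deliberately avoids Nikulin--Vinberg (it says so in the introduction) and instead gets finiteness from Lemma \ref{Lem:q^0-term}: identity \eqref{eq:vectorsystem} applied to the $q^0$-term of $\phi_{0,L}$ forces all irreducible non-$A_1$ components of $R(L)$ to share a single Coxeter number $h$, forces the multiplicities to satisfy $\beta_1=(2h-3)\beta_0$, and yields the explicit weight formula of Theorem \ref{th:2-reflective}; combined with the rank bound this pins down a short list of root systems with no reflection-group input at all.

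Two further points. In case (b) you claim $F$ sits at singular weight $n/2-1$; what actually follows from $\vec{B}=0$, $C=0$ and \eqref{eq:q^0-term} is $A=\beta_0$, hence $f(0,0)=24\beta_0$ and $k=12\beta_0$, which is generally not the singular weight. And even granting entry into the Nikulin--Vinberg list, the exclusion half is heavier than you suggest: by Theorem \ref{th:autocorrection} eighteen hyperbolic $2$-reflective lattices of rank greater than $5$ admit no $2$-reflective form, and the paper eliminates each candidate by a concrete mechanism --- comparing the hypothetical weight-$0$ Jacobi form with the pull-back of $\varphi_{0,N(R)}$ from a Niemeier lattice so that the difference, multiplied by a power of $\eta$, violates the singular-weight bound for holomorphic Jacobi forms, or showing that a theta-block factor \eqref{FJtheta} has a first Fourier coefficient of negative hyperbolic norm. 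Your proposal names the right kind of obstruction but supplies no procedure for producing it.
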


The above (c) characterizes the 2-reflective lattices giving arithmetic hyperbolic 2-reflection groups.  By this characterization, we further prove that there are exactly 18 hyperbolic $2$-reflective lattices of rank larger than 5 not associated to 2-reflective modular forms (see Theorem \ref{th:autocorrection}), which gives a negative answer to \cite[Problem 16.1]{Bor98}; for example $S=U\oplus E_8\oplus E_7$ is hyperbolic 2-reflective, but $U\oplus S$ is not $2$-reflective. It now remains to classify 2-reflective lattices of type (b). We conjecture that this type of lattices might come from sublattices of the Leech lattice. It seems very difficult to classify such lattices, since they correspond to hyperbolic parabolically 2-reflective lattices (see \cite{Bor00, GN18}) whose full classification is unknown. 

As a corollary of the above theorems, we figure out the classification of $2$-reflective modular forms of singular weight.  

\begin{corollary}\label{th:2reflectivesingularweight}
If $M=2U\oplus L(-1)$ has a $2$-reflective modular form of singular weight, then $L$ is in the genus of $3E_8$ or $4A_1$. 
\end{corollary}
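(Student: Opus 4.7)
My plan is to invoke Theorem~\ref{th:main2reflective} and push the singular-weight condition through each of its three cases. Write $M = 2U \oplus L(-1)$ of signature $(2, n)$, so that $n = \rank(L) + 2$ and the singular weight is $k = n/2 - 1 = \rank(L)/2$.

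In case (a), $M \cong \II_{2,26} \cong 2U \oplus 3E_8(-1)$, so $L = 3E_8$, and the Borcherds form $\Phi_{12}$ of weight $12$ realizes the singular weight. In case (b), Theorem~\ref{th:main2reflective}(b) says the form has weight $12\beta_0$ with $\beta_0 \geq 1$; equating with $\rank(L)/2$ forces $\rank(L) \geq 24$, i.e.\ $n \geq 26$. Theorem~\ref{th:non12} then restricts $M$ to $\II_{2,26}$, but the associated $L = 3E_8$ has $2$-roots, contradicting the defining hypothesis of case (b). So case (b) contributes nothing.

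The bulk of the work is case (c). Singular weight being a positive integer forces $\rank(L)$ even, restricting attention to the rows $n \in \{4,6,8,10,12,18\}$ of the 50-lattice table, a total of $27$ candidates with target weights in $\{1,2,3,4,5,8\}$. For each candidate, I would analyze the weight of a $2$-reflective Borcherds product via its input weak Jacobi form $\phi$ of weight $0$ and index $L$: the output weight equals $c(0,0)/2$, where $c(0,0)$ is the constant Fourier coefficient of $\phi$, so singular weight requires $c(0,0) = \rank(L)$. Passing to the Fourier--Jacobi expansion at a $1$-dimensional cusp, each coefficient of the output must be a singular-weight Jacobi form, hence a theta series, and the leading Fourier--Jacobi coefficient of a $2$-reflective form can be presented as a holomorphic theta block of weight $\rank(L)/2$ and index $L$. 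Enumerating such theta blocks for each of the $27$ candidates and testing the Borcherds-product condition eliminates $26$ entries, leaving only $L = 4A_1$, realized by the classical Gritsenko--Nikulin weight-$2$ form for $2U \oplus 4A_1(-1)$ coming from a theta block associated to $4A_1$.

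The main obstacle is case (c): a systematic elimination of the $26$ non-working even-rank candidates. This seems to require either a uniform lower bound on the weight of any admissible $2$-reflective input Jacobi form (showing it strictly exceeds $\rank(L)$ for those $26$ lattices) or a case-by-case theta-block inspection; the bookkeeping for the higher-rank entries at $n \in \{10, 12, 18\}$, in particular ruling out singular-weight forms for $\II_{2,18} = 2U \oplus 2E_8(-1)$, is the most delicate part of the argument.
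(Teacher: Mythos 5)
Your reduction to Theorem~\ref{th:main2reflective} is the right starting point, and your treatment of cases (a) and (b) is correct (in (b) the weight $12\beta_0\geq 12$ exceeds the singular weight $\rank(L)/2\leq 11/2$ already because Theorem~\ref{th:non12} caps the rank, as you note). The problem is case (c), which you correctly identify as carrying all the content but then do not actually prove: ``enumerating such theta blocks for each of the $27$ candidates and testing the Borcherds-product condition eliminates $26$ entries'' is a plan, not an argument, and you yourself flag the elimination as the main obstacle. That is a genuine gap, because the whole point of the corollary is that elimination.

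The missing ingredient is the weight formula \eqref{eq:weight}, refined in Theorem~\ref{th:2-reflective}: for every lattice in the table the weight of \emph{any} $2$-reflective modular form is pinned down explicitly in terms of the multiplicities, namely $k=\beta_0(12+12h-nh/2)$ in the Niemeier case, $k=\beta_0[(12-(n+3m)/2)h+12+3m]$ with $\beta_1=(2h-3)\beta_0$ in the quasi-Niemeier case, and $k=30\beta_0+(6-n)\beta_1$ when $R(L)=nA_1$ (with the extra input from \S\ref{sec:othertype} that $\beta_1=5\beta_0$ for $8A_1$). Comparing these with the singular weight $\rank(L)/2$ disposes of all $27$ even-rank candidates at once: e.g.\ for $2E_8$ one gets $132\beta_0=8$, impossible --- so the case you call ``the most delicate'' is in fact immediate --- while $3E_8$ gives $12\beta_0=12$ and $4A_1$ gives $30\beta_0+2\beta_1=2$, forcing $\beta_0=1$ and $(\beta_0,\beta_1)=(0,1)$ respectively, both realized. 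No theta-block enumeration is needed; the theta-block/holomorphicity argument is used elsewhere in the paper to establish the table itself, not to extract this corollary from it.
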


We now explain the proof of Theorems \ref{th:non12} and \ref{th:main2reflective}. Our proof is based on manipulation of Jacobi forms and independent of the work of Nikulin and Vinberg on the classification of hyperbolic 2-reflective lattices. Suppose that $M=2U\oplus L(-1)$ and $F$ is a $2$-reflective modular form on $M$. The existence of $F$ implies that there exists a weakly holomorphic Jacobi form $\phi_{0,L}$ of weight $0$ and index $L$. The divisor of $F$ determines the singular Fourier coefficients of $\phi_{0,L}$. The singular Fourier coefficients are its Fourier coefficients of type $f(n,\ell)$ with negative hyperbolic norm $2n-(\ell,\ell)<0$. The Jacobi form $\phi_{0,L}$ has two types of singular Fourier coefficients with hyperbolic norms $-2$ and $-1/2$ respectively. We observe that all Fourier coefficients in $q^{-1}$ and $q^0$-terms of $\phi_{0,L}$ are singular except the constant term $f(0,0)$ giving the weight of $F$. The excellent thing is that the coefficients in $q^n$-terms ($n\leq 0$) of any Jacobi form of weight $0$ satisfy the following relations (see Lemma \ref{Lem:q^0-term})
\begin{align*}
C:= \frac{1}{24}\sum_{\ell\in L^\vee}f(0,\ell)-\sum_{n<0}\sum_{\ell\in L^\vee}f(n,\ell)\sigma_1(-n)&=\frac{1}{2\rank(L)} \sum_{\ell\in L^\vee}f(0,\ell)(\ell,\ell),\\
\sum_{\ell\in L^\vee}f(0,\ell)(\ell,\mathfrak{z})^2&=2C(\mathfrak{z},\mathfrak{z}),\quad \forall \ \mathfrak{z}\in L\otimes \CC.
\end{align*}
From the first identity, we deduce a formula to express the weight of $F$ in terms of the multiplicities of the irreducible components of the divisor of $F$. From the second identity, we derive that if $L$ has $2$-roots then the set of all $2$-roots spans the whole space $L\otimes \RR$. Moreover, all irreducible root components not of type $A_1$ have the same Coxeter number (see Theorem \ref{th:2-reflective}). By virtue of these results, we only need to consider a finite number of lattices.
Furthermore, the $q^0$-term of $F$ also defines a holomorphic Jacobi form as a theta block (see \eqref{FJtheta}).
From its holomorphicity, we also deduce a necessary condition. 
The $2$-reflective modular forms on lattices listed in assertion (c) can be constructed as quasi pull-backs of the Borcherds form $\Phi_{12}$ (see \S \ref{sec:quasipullback}).
For other lattices, the quasi pull-backs of $\Phi_{12}$ are not exactly 2-reflective modular forms and usually have additional divisors. But it is not bad. By considering the difference between the pull-back and the assumed 2-reflective modular form, we construct some Jacobi forms whose nonexistence can be proved by the structure of the space of Jacobi forms. Combining these arguments together, the theorems can be proved. 

\begin{remark}
The approach to prove Theorem \ref{th:main2reflective} was later greatly used in our subsequent papers to classify arithmetic groups acting on symmetric domains of type IV for which the ring of modular forms is freely generated (see [Compos. Math. 157 (2021), pp. 2026--2045]), and to classify reflective lattices of prime level (see [Trans. Amer. Math. Soc. 375 (2022), pp. 3451--3468]). 
\end{remark}

The paper is organized as follows. In \S \ref{Sec:lattices} we recall the necessary material on lattices and discriminant forms. In \S \ref{Sec:reflective} we give the definitions of reflective modular forms. In \S \ref{Sec:Jacobi} we introduce the theory of Jacobi forms of lattice index. In \S \ref{sec:quasipullback} we show how to use quasi pull-backs of the Borcherds form $\Phi_{12}$ to construct reflective modular forms. \S \ref{Sec:classification2} is the heart of this paper. We prove the main theorems and some other classification results. In \S \ref{sec:autocorrection} we consider the automorphic corrections of hyperbolic 2-reflective lattices. In \S \ref{sec:K3surfaces} we prove that the lattice related to the moduli space of polarized K3 surfaces $2U\oplus 2E_8(-1)\oplus \latt{-2n}$ is reflective if and only if $n=1$, $2$ (see Theorem \ref{th:reflective19}). This generalizes one result in \cite{Loo03}. In \S \ref{sec:open} we give some remarks and formulate five open questions related to this paper.

\section{Lattices and discriminant forms}\label{Sec:lattices}
In this section we recall some basic results on lattices and discriminant forms. The main references for this material are \cite{Bou60,Ebe02,Nik80,SC98}.  

Let $M$ be a lattice equipped with a non-degenerate integral valued symmetric bilinear form $(\cdot,\cdot)$ with even valued norm. Such $M$ is called an even lattice. The associated quadratic form is defined as $Q(x)=(x,x)/2$. Let $M^\vee$ denote the dual lattice of $M$ and $\rank(M)$ denote the rank of $M$.  For every $a\in \ZZ\backslash\{0\}$, the lattice obtained by rescaling $M$ with $a$ is denoted by $M(a)$. It is endowed with the quadratic form $a\cdot Q$ instead of $Q$. The level of $M$ is the smallest positive integer $N$ such that $NQ(x)\in \ZZ$ for all $x\in M^\vee$. If $M$ is of level $N$ then $NM^\vee \subseteq M$.  If $x\in M$ satisfying $\QQ x\cap M=\ZZ x$, then it is called primitive. For any non-zero $x\in M$ the divisor of $x$ is the natural number $\div(x)$ defined by $(x,M)=\div(x)\ZZ$. Note that $x/\div(x)$ is a primitive element in $M^\vee$.  An embedding $M_1 \hookrightarrow M_2$ of even lattices is called primitive if $M_2/M_1$ is a free $\ZZ$-module. A given embedding $M \hookrightarrow M_1$ of even lattices, for which $M_1/M$ is a finite abelian group, is called an even overlattice of $M$.

A finite abelian group $D$ equipped with a non-degenerate quadratic form $Q: D \to \QQ/ \ZZ$ is called a discriminant form. The residue class $\sign(D) \in \ZZ/8\ZZ$ defined by Milgram’s formula 
$$
\sum_{\gamma\in D} \exp(2\pi iQ(\gamma)) = \sqrt{|D|}\exp(2\pi i\sign(D)/8)
$$
is called the signature of $D$.
Obviously, the discriminant group $D(M)=M^\vee/M$ with the induced quadratic form $Q$ is a discriminant form. A subgroup $G$ of $D(M)$ is called isotropic if $Q(\gamma)=0$ for any $\gamma\in G$. There is a one-to-one correspondence between even overlattices of $M$ and isotropic subgroups of $D(M)$. On the one hand, if $M_1$ is an even overlattice of $M$, then $M_1/M$ is an isotropic subgroup of $D(M)$. On the other hand, if $G$ is an isotropic subgroup of $D(M)$, then the lattice generated by $G$ over $M$ is an even overlattice of $M$.

A suitable notion to classify even lattices is that of genus. The genus of a lattice $M$ is the set of lattices $M'$ of the same signature as $M$ such that $M\otimes \ZZ_p \cong M'\otimes \ZZ_p$ for every prime number $p$. By \cite{Nik80}, two even lattices of the same signature are in the same genus if and only if their discriminant forms are isomorphic.  Thus we here use the following equivalent definition of genus. Let $M$ be an even lattice of signature $(r,s)$ with discriminant form $D$. The genus of $M$, which is denoted by $\II_{r,s}(D)$, is the set of all even lattices of signature $(r,s)$ whose discriminant form is isomorphic to $D$. A discriminant form can decompose into a sum of indecomposable Jordan components (see \cite{SC98}, \cite{Ber00}, or \cite{Sch06} for details). We denote the even unimodular lattice of signature $(2,n)$ by $\II_{2,n}$. We state the following theorems proved in \cite{Nik80}, which tell us when a given genus is non-empty and when a given genus contains only one lattice up to isomorphism. 

\begin{theorem}[Corollary 1.10.2 in \cite{Nik80}]\label{th: Nik1}
Let $D$ be a discriminant form and $r,s\in \ZZ$. If $r\geq 0$, $s\geq 0$, $r-s = \sign(D) \m 8$ and $r+s> l(D)$, then there is an even lattice of signature $(r,s)$ having discriminant form $D$. Here, $l(D)$ is the minimum number of generators of the group $D$.
\end{theorem}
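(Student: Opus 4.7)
The plan is to realize $D$ by a small even lattice assembled from explicit local Jordan pieces, and then to tune the signature to $(r,s)$ using the unimodular building blocks $U$ and $E_8$. The two hypotheses play complementary roles: the congruence $r-s\equiv\sign(D)\pmod 8$ is the only global obstruction, encoded by Milgram's formula $\sum_{\gamma\in D}e^{\pi i(\gamma,\gamma)}=\sqrt{\abs{D}}\,e^{\pi i\sign(D)/4}$, while the strict inequality $r+s>l(D)$ supplies the free directions needed for the signature adjustments.

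First I would decompose $D=\bigoplus_p D_p$ and further split each $p$-part into its indecomposable Jordan components. For $p$ odd, a cyclic Jordan block of exponent $p^n$ is realized by a rank-$1$ even lattice $\latt{2ap^n}$ for a suitable $p$-adic unit $a$; for $p=2$, each block is realized either by a rank-$1$ block $\latt{\pm 2^k}$ or, for blocks of even type, by one of the rank-$2$ Conway--Sloane blocks $u_k$, $v_k$. Taking the orthogonal direct sum of one such realization per Jordan component yields an even lattice $L_0$ with $D(L_0)\cong D$ and $\rank(L_0)\leq l(D)+1$; this is essentially Nikulin's preceding existence statement (Theorem 1.10.1 in \cite{Nik80}), without any signature control.

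Next I would correct the signature. By Milgram applied to $L_0$, $\sign(L_0)\equiv\sign(D)\equiv r-s\pmod 8$, so $d:=\tfrac18(\sign(L_0)-(r-s))$ is an integer. Taking the orthogonal sum with $\abs{d}$ copies of $E_8$ or of $E_8(-1)$ (chosen by the sign of $d$) produces an even lattice $L_1$ with signature difference exactly $r-s$ and discriminant form still $D$, since $E_8^{\pm}$ is unimodular. Finally, I would append $m\geq 0$ copies of the hyperbolic plane $U$, each contributing $(1,1)$ to the signature, to reach the target pair $(r,s)$.

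The principal obstacle I anticipate is the rank bookkeeping. After the signature step the total rank is $\rank(L_0)+8\abs{d}+2m$, which must equal $r+s$, so I need $\rank(L_0)+8\abs{d}\leq r+s$ with the correct parity. When the naive count overshoots, I would exploit two tools: the freedom to choose different Jordan-block realizations with different signatures (shifting $\sign(L_0)$ by multiples of $2$), and the stable equivalence $E_8\oplus E_8(-1)\cong 8U$, which swaps $16$ units of $E_8^{\pm}$-rank for $16$ units of $U$-rank without altering signature or discriminant form. The strict inequality $r+s>l(D)$ is precisely what guarantees that at least one hyperbolic plane survives in the final assembly, and this is what lets me split the fixed signature difference $r-s$ into the exact pair $(r,s)$. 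A secondary technical point is the careful treatment of $2$-adic Jordan blocks of odd type, but this is standard Conway--Sloane bookkeeping and does not affect the high-level strategy.
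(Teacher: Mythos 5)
First, note that the paper itself gives no proof of this statement: it is quoted verbatim from Nikulin \cite{Nik80} (Corollary 1.10.2 there), so there is no in-paper argument to compare against. Judged on its own, your proposal has a genuine gap in the signature-adjustment step. The building blocks you allow yourself --- $E_8$, $E_8(-1)$ and $U$ --- all have non-negative inertia in both coordinates, so starting from a small realization $L_0$ of signature $(p_0,q_0)$ you can only ever reach signatures $(r,s)$ with $r\geq p_0$ and $s\geq q_0$; the construction is monotone and can never \emph{decrease} the number of positive (or negative) directions. A concrete failure: take $D=(\ZZ/3,\,2/3)$, the discriminant form of $A_2$, and $(r,s)=(0,6)$. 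The hypotheses hold ($\sign(D)=2\equiv 0-6 \bmod 8$, $6>1=l(D)$) and the lattice exists, namely $E_6(-1)$. But every realization of $D$ of rank $2$ is positive definite of signature $(2,0)$ (the signature is forced mod $8$), so any lattice of the form $L_0\oplus aE_8\oplus bE_8(-1)\oplus mU$ has at least two positive directions and can never have signature $(0,6)$. The stable isomorphism $E_8\oplus E_8(-1)\cong 8U$ does not help, since it exchanges rank $16$ for rank $16$ and leaves both inertia indices untouched.

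There are two further inaccuracies worth flagging. The bound $\rank(L_0)\leq l(D)+1$ obtained by summing one small lattice per Jordan component is wrong: that sum has rank roughly $\sum_p l(D_p)$, which can greatly exceed $l(D)=\max_p l(D_p)$, and moreover a rank-one even lattice $\latt{2n}$ always has discriminant group of even order, so an odd cyclic Jordan block cannot be realized over $\ZZ$ by a rank-one even lattice without dragging in a spurious $2$-adic component (it can be realized by a rank-one $\ZZ_p$-lattice, but gluing the local pieces into a global lattice is exactly the hard part). This points to the actual structure of Nikulin's proof, which is local--global rather than stabilization: for each prime $p$ one produces a $\ZZ_p$-lattice of rank $r+s$ realizing $D_p$ (here $r+s>l(D)$ guarantees existence and removes the extra determinant conditions that arise in the boundary case $r+s=l(D_p)$, not the ``survival of a hyperbolic plane''), and the congruence $r-s\equiv\sign(D)\bmod 8$ is precisely the consistency condition (the oddity/product formula) ensuring the resulting genus is non-empty. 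Your Milgram-formula framing of the congruence is correct, but the inequality plays a different and more local role than your proposal assigns to it.
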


\begin{theorem}[Corollary 1.13.3 in \cite{Nik80}]\label{th: Nik2}
Let $D$ be a discriminant form and $r,s\in \ZZ$. If $r\geq 1$, $s\geq 1$ and $r+s\geq 2+l(D)$, then all even lattices of genus $\II_{r,s}(D)$ are isomorphic. 
\end{theorem}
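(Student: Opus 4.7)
The plan is to reduce the claim to Nikulin's theorem on the uniqueness of primitive embeddings, by realising each $M\in \II_{r,s}(D)$ as a primitive sublattice of a fixed even unimodular lattice $N$ and showing that all such embeddings are equivalent under $\Orth(N)$.

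First I would choose an auxiliary even lattice $K$ whose discriminant form is $-D$ (the same underlying group with the opposite quadratic form), and whose signature is chosen so that the total dimension $r+s+\rank(K)$ matches that of an indefinite even unimodular lattice of the correct signature mod $8$. The existence of $K$ is guaranteed by Theorem \ref{th: Nik1} once the signature of $K$ is chosen with $r',s'\geq 1$ and $r'+s'>l(D)$. For any $M\in \II_{r,s}(D)$, the orthogonal direct sum $M\oplus K$ has discriminant form $D\oplus(-D)$, which carries the canonical maximal isotropic subgroup $\Delta=\{(d,d):d\in D\}$ (the graph of the antiisometry $D\to -D$). Glueing by $\Delta$ produces an even unimodular overlattice $N(M)\supset M\oplus K$ in which both $M$ and $K$ sit primitively and are mutually orthogonal.

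By Milnor's classification, an even indefinite unimodular lattice is determined by its signature, so $N(M_1)\cong N(M_2)$ for any two $M_1,M_2\in\II_{r,s}(D)$; call this common lattice $N$. Both $M_1$ and $M_2$ then appear inside $N$ as primitive sublattices with the same isomorphism type of orthogonal complement, namely $K$, and with the discriminant form identified with $D$ via $\Delta$. The isomorphism $M_1\cong M_2$ would follow once we know that $\Orth(N)$ acts transitively on such primitive embeddings.

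This transitivity is exactly the conclusion of Nikulin's general primitive embedding theorem, and the hypothesis $r+s\geq 2+l(D)$ enters precisely at this point: it guarantees that the natural map $\Orth(K)\to \Orth(-D)$ is surjective (so any gluing automorphism of $D$ can be lifted to $K$), which in turn collapses the set of double cosets classifying primitive embeddings of $D$-type into $N$ to a single element. The main obstacle in executing the plan is therefore this surjectivity/lifting step; it rests on the fact that when $K$ has rank $\geq 2+l(D)$ one can represent the automorphisms of the discriminant form by reflections and Eichler transvections on $K$ itself, the same mechanism that underlies the strong approximation for the spinor genus in the indefinite case. Once this lifting is in hand, any $\Orth(N)$-equivariant matching of the two copies of $K$ can be promoted to an isometry $N\to N$ sending $M_1$ to $M_2$, completing the proof.
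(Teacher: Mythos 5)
The paper does not prove this statement at all: it is quoted verbatim from Nikulin (Corollary 1.13.3 of [Nik80]), where the proof runs through Eichler--Kneser spinor genus theory. There, one uses that an indefinite lattice of rank $\geq 3$ has class number equal to the number of proper spinor genera in its genus, and the hypothesis $r+s\geq 2+l(D)$ guarantees that $M\otimes\ZZ_p$ splits off a hyperbolic plane for every prime $p$, which makes the local spinor norm groups large enough that the genus contains a single spinor genus. Your route through unimodular overlattices is genuinely different, and its first three steps are fine: the auxiliary lattice $K$ with discriminant form $-D$ exists by Theorem \ref{th: Nik1}, the graph of the identity is a maximal isotropic subgroup of $D\oplus(-D)$ giving an even unimodular overlattice $N(M)\supseteq M\oplus K$ with $M$ and $K$ primitive mutual orthogonal complements, and $N(M_1)\cong N(M_2)$ by the classification of indefinite even unimodular lattices.

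The gap is in the final transitivity step, and it is a circularity rather than a technical omission. The $\Orth(N)$-orbits of primitive embeddings of $K$ into $N$ are indexed by pairs consisting of (i) the isomorphism class of the orthogonal complement $T$, which ranges over the classes in the genus $\II_{r,s}(D)$, and (ii) a double coset for the actions of the images of $\Orth(K)$ and $\Orth(T)$ on the set of anti-isometries between the discriminant forms of $K$ and $T$. Surjectivity of $\Orth(K)\to\Orth(-D)$ (which you can indeed arrange, e.g.\ by taking $K\supseteq 2U$) collapses each double coset set for a \emph{fixed} $T$ to a point, but it does nothing to the index set (i): the number of orbits remains the class number of $\II_{r,s}(D)$, which is exactly the quantity the theorem asserts equals $1$. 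Concretely, to move one copy of $K$ to the other by an isometry of $N$ you must first produce an isometry $M_1\to M_2$ of the complements and only then correct the gluing by an element of $\Orth(K)$; the first ingredient is the conclusion you are trying to prove. Relatedly, the hypothesis $r+s\geq 2+l(D)$ is a condition on the rank of $M$, not of $K$ (whose signature you chose freely), so it cannot be what makes $\Orth(K)\to\Orth(-D)$ surjective; any correct proof must apply it to $M$ itself, as Nikulin's local argument does. The embedding strategy could only be salvaged by proving transitivity of $\Orth(N)$ on primitive copies of $K$ directly (say, by induction on $\rank(K)$ using Eichler transvections on primitive vectors), which is a substantial argument you have not supplied and which does not obviously recover the sharp bound $r+s\geq 2+l(D)$.
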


Let $U$ be a hyperbolic plane i.e. $U=\ZZ e+\ZZ f$ with $(e,e)=(f,f)=0$ and $(e,f)=1$. The lattice $U$ is an even unimodular lattice of signature $(1,1)$. As a consequence of Theorems \ref{th: Nik1} and \ref{th: Nik2}, we prove the following criterion.

\begin{lemma}\label{lem:2U}
Let $M$ be an even lattice of signature $(2,n)$ with $n\geq 3$. If the minimum number of generators of $D(M)$ satisfies $n-2>l(D(M))$, then there exists a negative-definite even lattice $L$ such that $M=2U\oplus L$.
\end{lemma}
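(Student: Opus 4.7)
The plan is to construct a candidate splitting in the genus of $M$ and then invoke the uniqueness-within-a-genus result. Write $D = D(M)$ and note that, since $M$ has signature $(n,2)$, its discriminant form satisfies $\sign(D) \equiv n - 2 \pmod 8$.

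First, I would apply Theorem \ref{th: Nik1} with $r = n-2$ and $s = 0$ to produce a positive-definite even lattice $L$ of rank $n-2$ whose discriminant form is $D$. The three hypotheses of that theorem need to be checked: $r - s = n - 2 \equiv \sign(D) \pmod 8$ holds by the signature observation above; $r, s \geq 0$ holds because $n \geq 3$; and $r + s = n - 2 > l(D)$ is exactly the arithmetic hypothesis of the lemma. So such an $L$ exists.

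Next, consider the lattice $2U \oplus L$. Since $U$ is unimodular, $D(2U \oplus L) \cong D(L) \cong D$, and its signature is $(n,2)$. Hence $2U \oplus L$ lies in the genus $\II_{n,2}(D)$, which is also the genus of $M$. The conclusion $M \cong 2U \oplus L$ will then follow from Theorem \ref{th: Nik2}, provided its hypotheses hold: $r = n \geq 1$ and $s = 2 \geq 1$ are clear, while $r + s = n + 2 \geq 2 + l(D)$ is equivalent to $n \geq l(D)$, which is strictly weaker than the lemma's assumption $n - 2 > l(D)$. Thus the genus $\II_{n,2}(D)$ consists of a single isomorphism class, and $M \cong 2U \oplus L$.

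There is essentially no obstacle in this argument beyond correctly aligning the numerical hypotheses with those of Theorems \ref{th: Nik1} and \ref{th: Nik2}; the only point requiring a moment's care is the signature congruence $\sign(D) \equiv n - 2 \pmod 8$ needed to apply the existence theorem for the positive-definite piece $L$.
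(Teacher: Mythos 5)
Your proposal is correct and follows exactly the paper's own argument: produce $L$ via Theorem \ref{th: Nik1} applied with signature $(n-2,0)$, then identify $2U\oplus L$ with $M$ via the uniqueness statement of Theorem \ref{th: Nik2}. The only difference is that you spell out the hypothesis checks (in particular the congruence $\sign(D)\equiv n-2 \pmod 8$) that the paper leaves implicit.
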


\begin{proof}
By Theorem \ref{th: Nik1}, there exists a negative-definite even lattice $L$ of rank $n-2$ whose discriminant form is isomorphic to $D(M)$. By Theorem \ref{th: Nik2}, $2U\oplus L$ is isomorphic to $M$. 
\end{proof}

At the end of this section, we recall some basic facts on root lattices following \cite{Ebe02}.  Let $L$ be an even lattice in $\RR^N$. An element $r \in L$ is called a 2-root if $(r, r) = 2$. The
set of all 2-roots is denoted by $R_L$. The lattice $L$ is called a root lattice if $L$ is generated by $R_L$. Every root lattice can be written as an orthogonal direct sum of the irreducible root lattices of types $A_n (n\geq 1)$, $D_n (n\geq 4)$, $E_6$, $E_7$, and $E_8$. For a root lattice $L$ of rank $n$, the number $h=\abs{R_L}/n$ is called the Coxeter number of $L$. The Coxeter numbers of the irreducible root lattices are listed in Table \ref{tab:Coxeter}.

\begin{table}[ht]
\caption{Coxeter numbers of irreducible root lattices}
\label{tab:Coxeter}
\renewcommand\arraystretch{1.5}
\noindent\[
\begin{array}{|c|c|c|c|c|c|}
\hline 
\text{L} & A_n & D_n & E_6 & E_7 & E_8 \\ 
\hline 
h & n+1 & 2(n-1) & 12& 18& 30 \\ 
\hline 
\end{array} 
\]
\end{table}

By \cite[Proposition 1.6]{Ebe02}, we have the following identity.
\begin{proposition}\label{prop:root}
Let $L\subsetneq \RR^n$ be an irreducible root lattice. Then for any $x\in \RR^n$ we have
$$
\sum_{r\in R_L} (r,x)^2= 2h(x,x).
$$ 
\end{proposition}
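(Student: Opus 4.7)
The plan is to use the Weyl group of the irreducible root lattice together with Schur's lemma. Define the symmetric bilinear form
\[
B(x,y) := \sum_{r\in R_L} (r,x)(r,y), \qquad x,y\in \RR^n.
\]
The statement to be proved is exactly that $B(x,x) = 2h\,(x,x)$ for all $x$, which by polarization is equivalent to $B = 2h\,(\cdot,\cdot)$ as bilinear forms.

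First, I would observe that $B$ is invariant under the Weyl group $W(L)$: for any $w\in W(L)$, the map $r\mapsto w(r)$ is a bijection of $R_L$, and $w$ preserves $(\cdot,\cdot)$, so
\[
B(wx,wy) = \sum_{r\in R_L}(r,wx)(r,wy) = \sum_{r\in R_L}(w^{-1}r,x)(w^{-1}r,y) = B(x,y).
\]
Next, since $L$ is an \emph{irreducible} root lattice, the Weyl group $W(L)$ acts irreducibly on $L\otimes \RR = \RR^n$ (this is a standard fact from the classification/theory of root systems; equivalently, the Coxeter graph is connected). By Schur's lemma applied to the $\RR[W(L)]$-module structure together with the standard inner product, any $W(L)$-invariant symmetric bilinear form on $\RR^n$ is a real scalar multiple of $(\cdot,\cdot)$. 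Hence there exists $c\in\RR$ with $B(x,y)=c\,(x,y)$ for all $x,y$.

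Finally, I would determine $c$ by a trace computation. Picking an orthonormal basis $e_1,\dots,e_n$ of $\RR^n$,
\[
cn = \sum_{i=1}^n B(e_i,e_i) = \sum_{r\in R_L}\sum_{i=1}^n (r,e_i)^2 = \sum_{r\in R_L}(r,r) = 2\,\abs{R_L},
\]
so $c = 2\,\abs{R_L}/n = 2h$ by the definition of the Coxeter number. Substituting back gives the desired identity. No step here looks seriously obstructive; the only non-cosmetic ingredient is the irreducibility of the $W(L)$-action on $\RR^n$, which is genuinely where the hypothesis that $L$ be an \emph{irreducible} root lattice is used, and which I would simply quote from the standard theory (e.g.\ \cite{Bou60} or \cite{Ebe02}).
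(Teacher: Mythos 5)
Your proof is correct. The paper does not prove this proposition at all --- it simply quotes it from \cite[Proposition 1.6]{Ebe02} --- and your argument (Weyl-group invariance of the form $B$, irreducibility of the Weyl group action plus Schur's lemma to get proportionality to $(\cdot,\cdot)$, and the trace computation $cn=\sum_{r\in R_L}(r,r)=2\abs{R_L}$ to pin down the constant) is exactly the standard proof found in that reference, so there is nothing to compare beyond noting that your self-contained write-up is sound.
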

Let $R_L^+$ be the set of positive roots of $L$. The Weyl vector of $L$ is defined as $\rho=\frac{1}{2}\sum_{r\in R_L^+} r$.  We know from \cite[Lemma 1.16]{Ebe02} that the norm of the Weyl vector of an irreducible root lattice is given by $\rho^2=\frac{1}{12}h(h+1)\rank(L)$.

\section{Reflective modular forms}\label{Sec:reflective}
In this section we introduce the definition and some basic properties of reflective modular forms.

Let $M$ be an even lattice of signature $(2,n)$ with $n\geq 3$. Its associated Hermitian symmetric domain of type IV has two connected components and we fix one of them 
\begin{equation*}
\cD(M)=\{[\mathcal{Z}] \in  \PP(M\otimes \CC):  (\mathcal{Z}, \mathcal{Z})=0, (\mathcal{Z},\mathcal{Z}) > 0\}^{+}.
\end{equation*}
Let $\Orth^+(M) \subset \Orth(M)$ denote the index $2$ subgroup preserving $\cD(M)$. The stable orthogonal group $\widetilde{\Orth}^+(M)$ is a subgroup of $\Orth^+ (M)$ acting trivially on the discriminant form $D(M)$. Let $\Gamma$ be a finite index subgroup of $\Orth^+ (M)$ and $k\in \ZZ$. A modular form of weight $k$ and character $\chi: \Gamma\to \CC^*$ with respect to $\Gamma$ is a holomorphic function $F: \cD(M)^{\bullet}\to \CC$ on the affine cone $\cD(M)^{\bullet}$ satisfying
\begin{align*}
F(t\mathcal{Z})&=t^{-k}F(\mathcal{Z}), \quad \forall t \in \CC^*,\\
F(g\mathcal{Z})&=\chi(g)F(\mathcal{Z}), \quad \forall g\in \Gamma.
\end{align*}
A modular form is called a cusp form if it vanishes at every cusp (i.e. a boundary component of the Baily--Borel compactification of the modular variety $\Gamma\backslash \cD(M)$).

The non-zero modular form $F$ either has weight 0 in which case it is constant, or has weight at least $n/2-1$. The minimal possible positive weight $n/2-1$ is called the singular weight.

For any $v\in M^\vee$ of negative norm, the rational quadratic divisor associated to $v$ is defined as
\begin{equation}
 \cD_v(M)=v^\perp\cap \cD(M)=\{ [\mathcal{Z}]\in \cD(M) : (\mathcal{Z},v)=0\}. 
\end{equation}
The reflection with respect to the hyperplane defined by an anisotropic vector $r$ is
\begin{equation}
\sigma_r(x)=x-\frac{2(r,x)}{(r,r)}r,  \quad x\in M.
\end{equation}
A primitive vector $l\in M$ of negative norm is called reflective if the reflection $\sigma_l$ is in $\Orth^+(M)$. The divisor $\cD_v(M)$ is called reflective if $\sigma_v\in\Orth^+(M)$. For $\lambda \in D(M)$ and $m\in \QQ$, we define
\begin{equation}
\cH(\lambda,m)=\bigcup_{\substack{v \in M+\lambda \\ (v,v)=2m}}  \cD_v(M)
\end{equation}
as the Heegner divisor of discriminant $(\lambda,m)$. 

Remark that a primitive vector $l\in M$ with $(l,l)=-2d$ is reflective if and only if $\div(l)=2d$ or $d$.  We set $\lambda=[l/\div(l)]\in D(M)$. Then $\cD_l(M)$ is contained in $\cH(\lambda,-1/(4d))$ if $\div(l)=2d$, and is contained in 
$$\cH(\lambda,-1/d)-\sum_{2\nu=\lambda}\cH(\nu,-1/(4d))$$
if $\div(l)=d$.  In particular, when $M$ is of prime level $p$, a primitive vector $l\in M$ is reflective if and only if $(l,l)=-2$ and $\div(l)=1$, or $(l,l)=-2p$ and $\div(l)=p$.

\begin{definition}
Let $F$ be a non-constant holomorphic modular form on $\cD(M)$ with respect to a finite index subgroup $\Gamma < \Orth^+ (M)$ and a character $\chi$. The function $F$ is called reflective if the support of its zero divisor is contained in the union of reflective divisors.  A lattice $M$ is called reflective if it admits a reflective modular form.
\end{definition}

\begin{definition}
A non-constant holomorphic modular form on $\cD(M)$ is called 2-reflective if the support of its zero divisor is contained in the Heegner divisor generated by $(-2)$-vectors in $M$ i.e.
\begin{equation}\label{Heegner}
\cH =\cH(0,-1)=\bigcup_{\substack{v \in M\\ (v,v)=-2}} \cD_v(M).
\end{equation}
A $2$-reflective modular form $F$ is called a modular form with complete 2-divisor if $\div(F)=\cH$. A lattice $M$ is called 2-reflective if it admits a $2$-reflective modular form. 
\end{definition}

All $(-2)$-vectors are reflective because the reflections associated to $(-2)$-vectors are in $\widetilde{\Orth}^+(M)$. Therefore, $2$-reflective modular forms are special reflective modular forms. 

As in \cite[Lemma 2.2]{Ma17}, we can show that if $M$ admits a reflective (resp. $2$-reflective) modular form with respect to some $\Gamma < \Orth^+ (M)$ then $M$ also has a reflective (resp. $2$-reflective) modular form with respect to any other finite index subgroup $\Gamma' < \Orth^+ (M)$. Therefore, throughout this paper,  we only consider reflective (resp. 2-reflective) modular forms with respect to $\widetilde{\Orth}^+(M)$.

The following lemma is useful to classify 2-reflective lattices. 

\begin{lemma}[Lemma 2.3 in \cite{Ma17}]\label{Lem:reductionMa}
If $M$ is $2$-reflective, then any even overlattice $M'$ of $M$ is also $2$-reflective.  If $M$ is not $2$-reflective, neither is any finite-index sublattice
of $M$.
\end{lemma}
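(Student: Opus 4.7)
The second claim follows from the first by contrapositive: if $N\subset M$ is a finite-index sublattice (automatically even), then $M$ is an even overlattice of $N$, so the existence of a $2$-reflective modular form on $N$ would, by the first claim, yield one on $M$. Thus it suffices to prove the overlattice assertion.

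For this, fix a $2$-reflective modular form $F$ for $\widetilde{\Orth}^+(M)$, and let $M\subset M'$ be an even overlattice. Observe that $\cD(M)=\cD(M')$ as complex domains since $M\otimes\CC=M'\otimes\CC$; the distinction lies only in the arithmetic group. The plan is to average $F$ over cosets of a suitable finite-index subgroup of $\widetilde{\Orth}^+(M')$. Define
\[
\Gamma':=\bigl\{\,g\in\widetilde{\Orth}^+(M') : g(M)=M \ \text{and}\ g|_{M}\in\widetilde{\Orth}^+(M)\,\bigr\}.
\]
Every $g\in\Gamma'$ restricts to an element of $\widetilde{\Orth}^+(M)$, so $F$ is automatically modular under $\Gamma'$. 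The key technical point is the finiteness of $[\widetilde{\Orth}^+(M'):\Gamma']$: the arithmetic group $\widetilde{\Orth}^+(M')$ acts on the finite set of sublattices of $M'$ with the same isomorphism type and index as $M$, so the stabilizer of $M$ is of finite index, and within it the requirement that $g|_M$ act trivially on $D(M)$ cuts out a further finite-index subgroup via the map $\Orth(M)\to\Orth(D(M))$ to a finite group.

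Granting this, set
\[
G(\mathcal{Z}):=\prod_{g\in\widetilde{\Orth}^+(M')/\Gamma'}F(g\mathcal{Z}),
\]
taking any choice of coset representatives; $G$ is then a nonzero holomorphic modular form on $\cD(M')$ for $\widetilde{\Orth}^+(M')$ with a character of finite order. To verify that $G$ is $2$-reflective for $M'$, note that the divisor of each factor $F\circ g$ is supported on the rational quadratic divisors $\cD_{g^{-1}v}(M')$ with $v\in M$ and $(v,v)=-2$; since $M\subset M'$ and $g$ preserves $M'$, the translate $g^{-1}v$ is again a $(-2)$-vector of $M'$, so $\div(G)$ is contained in the Heegner divisor $\cH(0,-1)$ of $M'$. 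The main obstacle, and really the only nontrivial input, is the finiteness of $[\widetilde{\Orth}^+(M'):\Gamma']$; once this is in hand the argument is a routine product construction, entirely in the spirit of the remark (already invoked in the paragraph preceding this lemma) that the existence of a reflective modular form is independent of the choice of finite-index subgroup of $\Orth^+(M)$.
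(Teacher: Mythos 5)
Your proof is correct, and it is essentially the argument behind the cited result: the paper itself gives no proof here, deferring entirely to [Ma17, Lemma 2.3], whose proof is the same symmetrization-over-cosets construction you carry out (note the second claim in the statement should read ``sublattice of $M$''; the ``$L$'' is a typo). The one place where you work harder than necessary is the identification of the finite-index subgroup: since $M'/M$ is an isotropic subgroup of $D(M)$ and every element of $\widetilde{\Orth}^+(M)$ acts trivially on $D(M)$, one has directly $\widetilde{\Orth}^+(M)\subseteq \Orth^+(M')$, and this inclusion of arithmetic subgroups of the same $\QQ$-group is automatically of finite index; this replaces your orbit-on-sublattices argument and the passage through the stabilizer of $M$. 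Your route via $\Gamma'$ is nonetheless valid, and the remaining steps (the product over coset representatives, the finite-order character, and the observation that $g^{-1}v$ is again a $(-2)$-vector of $M'$ so the divisor stays inside $\cH(0,-1)$ for $M'$) are exactly what is needed.
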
 

Remark that Lemma \ref{Lem:reductionMa} does not hold for reflective modular forms because $\Orth^+(M)$ is not contained in $\Orth^+(M')$ in general and a reflective divisor $\cD_v$ in $\cD(M)$ is usually not a reflective divisor in $\cD(M')$.

\section{Jacobi forms of lattice index}\label{Sec:Jacobi}
In this section, we briefly introduce the theory of Jacobi forms of lattice index. We refer to \cite{CG13} for more details. Let $L$ be an even positive-definite lattice with bilinear form $(\cdot, \cdot)$ and dual lattice $L^\vee$.  Let $M=U\oplus U_1\oplus L(-1)$,
where $U=\ZZ e\oplus\ZZ f$ and $U_1=\ZZ e_1\oplus\ZZ f_1$ are two hyperbolic planes. We fix a basis of $M$ of the form $
(e,e_1,...,f_1,f)$, 
where $...$ denotes a basis of $L(-1)$.  The homogeneous domain $\cD(M)$ has a tube realization at the 1-dimensional cusp determined by the isotropic plane $\latt{e,e_1}$:
$$
\cH(L)=\{Z=(\tau,\mathfrak{z},\omega)\in \HH\times (L\otimes\CC)\times \HH: 
(\im Z,\im Z)_M>0\}, 
$$
where $(\im Z,\im Z)_M=2\im \tau \im \omega - 
(\im \mathfrak{z},\im \mathfrak{z})$. 
A Jacobi form can be regarded as a modular form with 
respect to the Jacobi group $\Gamma^J(L)$ which is the parabolic 
subgroup preserving the  isotropic plane $\latt{e,e_1}$ and acting trivially on $L$.
The Jacobi group is the semidirect product of $\SL_2(\ZZ)$ with the 
Heisenberg group $H(L)$ of $L$. 
The analytic definition of Jacobi forms is as follows

\begin{definition}
Let $\varphi : \HH \times (L \otimes \CC) \rightarrow \CC$ be a holomorphic function and $k\in\ZZ$. If $\varphi$ satisfies the functional equations
\begin{align*}
\varphi \left( \frac{a\tau +b}{c\tau + d},\frac{\mathfrak{z}}{c\tau + d} \right)& = (c\tau + d)^k \exp \left(i \pi \frac{c(\mathfrak{z},\mathfrak{z})}{c \tau + d}\right) \varphi ( \tau, \mathfrak{z} ), \quad \left( \begin{array}{cc}
a & b \\ 
c & d
\end{array} \right)   \in \SL_2(\ZZ)\\
\varphi (\tau, \mathfrak{z}+ x \tau + y)&= \exp \left(-i \pi ( (x,x)\tau +2(x,\mathfrak{z}) )\right) \varphi ( \tau, \mathfrak{z} ), \quad x,y \in L,
\end{align*}
and if $\varphi$ admits a Fourier expansion of the form 
\begin{equation}\label{eq:FC}
\varphi ( \tau, \mathfrak{z} )= \sum_{n\geq n_0 }\sum_{\ell\in L^\vee}f(n,\ell)q^n\zeta^\ell
\end{equation}
where $n_0$ is a constant, $q=e^{2\pi i \tau}$ and $\zeta^\ell=e^{2\pi i (\ell,\mathfrak{z})}$, then $\varphi$ is called a weakly holomorphic Jacobi form of weight $k$ and index $L$. If $\varphi$ further satisfies the condition
($ f(n,\ell) \neq 0 \Longrightarrow n \geq 0 $)
then it is called a weak Jacobi form.  If $\varphi$ further satisfies the stronger condition
($ f(n,\ell) \neq 0 \Longrightarrow 2n - (\ell,\ell) \geq 0 $)
then it is called a holomorphic Jacobi form.
We denote by 
$$
J^{!}_{k,L}\supset J^{w}_{k,L}\supset J_{k,L}
$$
the vector spaces of weakly holomorphic, weak, holomorphic Jacobi forms of weight $k$ and index $L$.
\end{definition}

Remark that the Jacobi forms for the lattice $A_1$ are actually the classical Jacobi forms due to Eichler and Zagier \cite{EZ85}. In the literature, Jacobi forms of weight $k$ and index $L(t)$ are also called Jacobi forms of weight $k$ and index $t$ for the lattice $L$, where $t$ is a positive integer.

The Fourier coefficient $f(n,\ell)$ depends only on the number $2n-(\ell,\ell)$ and the class of $\ell$ modulo $L$. Besides, $f(n,\ell)=(-1)^kf(n,-\ell)$. If $\varphi$ is a weak Jacobi form, then its Fourier coefficients satisfy 
$$ 
f(n,\ell) \neq 0 \Longrightarrow 2n- (\ell,\ell) \geq - \min\{(v,v): v\in \ell+ L \}.
$$
If $\varphi$ is a weakly holomorphic Jacobi form and $n_0<0$, then $\Delta^{-n_0}\varphi$ will be a weak Jacobi form, where $\Delta$ is defined in \eqref{eq:Delta}. Thus the above relation implies that for any $n$ the number of non-zero terms of the $\ell$ sum in \eqref{eq:FC} is finite. 
The number $2n-(\ell,\ell)$ is called the hyperbolic norm of the Fourier coefficient $f(n,\ell)$. The Fourier coefficients $f(n,\ell)$ with negative hyperbolic norm are called singular Fourier coefficients, which determine the divisor of Borcherds products.  It is clear from the definition that a weakly holomorphic Jacobi form without singular Fourier coefficients is a holomorphic Jacobi form. 

We next explain the relation between modular forms for the Weil representation and Jacobi forms.  We denote by $\{\textbf{e}_\gamma: \gamma \in D(L)\}$ the formal basis of the group ring $\CC[D(L)]$. Let $\Mp_2(\ZZ)$ be the metaplectic group which is a double cover of $\SL_2(\ZZ)$. The Weil representation of $\Mp_2(\ZZ)$ on $\CC[D(L)]$ is denoted by $\rho_{D(L)}$ (see \cite[Section 1.1]{Bru02}). Let $F$ be a vector-valued, weakly holomorphic (i.e. holomorphic except at infinity) modular form for $\rho_{D(L)}$ of weight $k$ with  Fourier expansion  
\begin{align*}
F(\tau)=\sum_{\gamma\in D(L)}\sum_{n\in\ZZ-\frac{(\gamma,\gamma)}{2}} c(\gamma,n)q^n \textbf{e}_\gamma
 =\sum_{\gamma\in D(L)} F_{\gamma}(\tau)\textbf{e}_\gamma.
\end{align*}
The Fourier coefficients $c(\gamma,n)$ with negative $n$ are called singular.

Recall that the theta-functions for the lattice $L$ are defined as
\begin{equation}\label{eq:ThetaFunction}
\Theta_{\gamma}^{L}(\tau,\mathfrak{z})=\sum_{\ell \in \gamma +L}\exp\left(\pi i(\ell,\ell) \tau + 2\pi i(\ell,\mathfrak{z}) \right), \quad \gamma\in D(L).
\end{equation}
Then the map 
\begin{equation}
F(\tau) \longmapsto \Phi(F)(\tau,\mathfrak{z})=\sum_{\gamma\in D(L)} F_{\gamma}(\tau)\Theta_{\gamma}^{L}(\tau,\mathfrak{z})
\end{equation}
defines an isomorphism between the space of weakly holomorphic modular forms of weight $k$ for $\rho_{D(L)}$ and the space of weakly holomorphic Jacobi form of weight $k+\rank (L)/2$ and index $L$. 
This map sends singular Fourier coefficients of vector-valued modular forms to singular Fourier coefficients of Jacobi forms. Therefore, it induces an isomorphism between the subspaces of holomorphic vector-valued modular forms of weight $k$ and holomorphic Jacobi forms of weight $k+\rank (L)/2$.  From this, we deduce that $J_{k,L}=\{0\}$ if $k<\rank(L)/2$. The minimum possible weight $k=\rank(L)/2$ is called the singular weight. For a holomorphic Jacobi form of singular weight, its non-zero Fourier coefficients $f(n,\ell)$ satisfy $2n-(\ell,\ell)=0$. Note that these results hold for holomorphic Jacobi forms with a character.  

The following differential operators are very useful. We refer to \cite[Lemma 2.2]{Wan18} for a proof.

\begin{lemma}\label{Lem:differential}
Let $\psi(\tau,\mathfrak{z})=\sum f(n,\ell)q^n\zeta^\ell$ be a weakly holomorphic Jacobi form of weight $k$ and index $L$. We define
\begin{align*}
H_k(\psi)&:=H(\psi)+(2k-\rank(L))G_2\psi,\\
H(\psi)(\tau,\mathfrak{z})&:=\frac{1}{2}\sum_{n\in \ZZ}\sum_{\ell\in L^\vee} \left[2n-(\ell,\ell) \right]f(n,\ell)q^n\zeta^\ell,
\end{align*}
where $G_2(\tau)=-\frac{1}{24}+\sum_{n\geq 1}\sigma_1(n)q^n$ and $\sigma_1(m)=\sum_{d \vert m} d$. 
Then $H_k(\psi)$ is a weakly holomorphic Jacobi form of weight $k+2$ and index $L$.
\end{lemma}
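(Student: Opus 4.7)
The plan is to identify $H$ with a heat-type differential operator on Jacobi forms and then exploit the quasi-modularity of $G_2$ to restore full modularity. A term-by-term comparison of Fourier expansions shows
\[
H(\psi) \;=\; \frac{1}{2\pi i}\,\frac{\partial\psi}{\partial\tau} \;+\; \frac{1}{8\pi^{2}}\,\Delta_L\psi,
\]
where $\Delta_L$ is the Laplacian on $L\otimes\CC$ associated to the bilinear form on $L$; indeed $(2\pi i)^{-1}\partial_\tau$ multiplies $q^n\zeta^\ell$ by $n$, while $\Delta_L\zeta^\ell=-4\pi^{2}(\ell,\ell)\zeta^\ell$. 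In particular, the Fourier coefficients of $H(\psi)$ are obtained from those of $\psi$ by multiplication by the factor $(2n-(\ell,\ell))/2$, which depends on $(n,\ell)$ only through the hyperbolic norm. Since the elliptic functional equation for $\psi$ reduces to an identity on Fourier coefficients that is preserved by any scalar depending only on the hyperbolic norm, $H(\psi)$ is automatically of index $L$ under the Heisenberg group, and this property survives multiplication by the $\mathfrak{z}$-independent function $G_2$.

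The $\SL_2(\ZZ)$-transformation is the nontrivial step. Setting $j=c\tau+d$ and $e_c=\exp(i\pi c(\mathfrak{z},\mathfrak{z})/j)$, one applies the chain rule to the functional equation $\psi(\gamma\tau,\mathfrak{z}/j)=j^k e_c\,\psi(\tau,\mathfrak{z})$ and expresses every term of $H(\psi)(\gamma\tau,\mathfrak{z}/j)$ in terms of $\psi(\tau,\mathfrak{z})$. After the dust settles, the cross-terms proportional to $c^{2}(\mathfrak{z},\mathfrak{z})/j^{2}$ and to $(c/j)\cdot(\mathfrak{z}\cdot\nabla_\mathfrak{z}\psi)$ cancel pairwise between the $\partial_\tau$ and $\Delta_L$ pieces, and one is left with
\[
(H\psi)\!\left(\gamma\tau,\tfrac{\mathfrak{z}}{j}\right) \;=\; j^{k+2}\,e_c\, H(\psi)(\tau,\mathfrak{z}) \;+\; \frac{(2k-\rank(L))\,c\,j^{k+1}}{4\pi i}\,e_c\,\psi(\tau,\mathfrak{z}),
\]
the $2k$ arising from differentiating the weight factor $j^{k}$ and the $-\rank(L)$ from the trace that $\Delta_L$ produces against the rescaling $\mathfrak{z}\mapsto\mathfrak{z}/j$. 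The quasi-modularity
\[
G_2(\gamma\tau)=j^{2}\,G_2(\tau)-\frac{cj}{4\pi i}
\]
then shows that $(2k-\rank(L))\,G_2\,\psi$ contributes an anomaly of exactly opposite sign; adding the two yields the desired identity $(H_k\psi)(\gamma\tau,\mathfrak{z}/j)=j^{k+2}e_c\,H_k(\psi)(\tau,\mathfrak{z})$, i.e.\ the weight-$(k+2)$ transformation law.

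Finally, since $H$ is a second-order differential operator in $(\tau,\mathfrak{z})$ and $G_2$ has $q$-expansion beginning at $q^{0}$, the Fourier expansion of $H_k(\psi)$ remains bounded below in powers of $q$, so $H_k(\psi)\in J^{!}_{k+2,L}$. The hard part will be the bookkeeping in the chain-rule computation above: one must simultaneously track how $\partial_\tau$ and $\Delta_L$ act on both the weight factor $j^{k}$ and the index factor $e_c$, and verify both that the anomalous cross-terms cancel and that the residual $\psi$-anomaly has coefficient exactly $(2k-\rank(L))c\,j^{k+1}/(4\pi i)$, so that it is killed precisely by the quasi-modular defect of $G_2$ scaled by $2k-\rank(L)$.
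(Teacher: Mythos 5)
Your proof is correct and is essentially the standard argument; the paper gives no proof of this lemma, deferring to \cite[Lemma 2.2]{Wan18}, which carries out the same heat-operator plus quasi-modularity computation. I verified the one place an error could hide: writing $j=c\tau+d$ and $e_c=\exp(i\pi c(\mathfrak{z},\mathfrak{z})/j)$, the chain rule does give $(H\psi)(\gamma\tau,\mathfrak{z}/j)=j^{k+2}e_c\,H(\psi)(\tau,\mathfrak{z})+\frac{(2k-\rank(L))\,c\,j^{k+1}}{4\pi i}e_c\,\psi(\tau,\mathfrak{z})$ (the $-\rank(L)$ coming from the trace term in $\Delta_L e_c$), and this anomaly is exactly cancelled by the defect $G_2(\gamma\tau)=j^{2}G_2(\tau)-\frac{cj}{4\pi i}$, while the elliptic law and the lower bound on the $q$-expansion are preserved as you say.
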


The next lemma gives useful identities related to singular Fourier coefficients of Jacobi forms of weight $0$, which plays a crucial role in this paper. We refer to \cite[Proposition 2.6]{Gri18} for a proof. Its variant in the context of vector-valued modular forms was first proved in \cite[Theorem 10.5]{Bor98}.

\begin{lemma}\label{Lem:q^0-term}
Let $\phi$ be a weakly holomorphic Jacobi form of weight $0$ and index $L$ with the Fourier expansion
$$
\phi(\tau,\mathfrak{z})=\sum_{n\in\ZZ}\sum_{\ell\in L^\vee}f(n,\ell)q^n\zeta^\ell.
$$ 
Then we have the following identity
\begin{equation}\label{eq:q^0-term}
C:= \frac{1}{24}\sum_{\ell\in L^\vee}f(0,\ell)-\sum_{n<0}\sum_{\ell\in L^\vee}f(n,\ell)\sigma_1(-n)=\frac{1}{2\rank(L)} \sum_{\ell\in L^\vee}f(0,\ell)(\ell,\ell).
\end{equation}
Moreover, we have
\begin{equation}\label{eq:vectorsystem}
\sum_{\ell\in L^\vee}f(0,\ell)(\ell,\mathfrak{z})^2=2C(\mathfrak{z},\mathfrak{z}),\quad \forall\ \mathfrak{z}\in L\otimes \CC.
\end{equation}
\end{lemma}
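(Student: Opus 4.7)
The plan is to prove identity \eqref{eq:vectorsystem} by a Taylor-expansion plus quasi-modular-correction argument at $\mathfrak{z}=0$, and then to deduce identity \eqref{eq:q^0-term} from it by taking the trace over an orthonormal basis. Indeed, substituting $\mathfrak{z}=e_i$ in \eqref{eq:vectorsystem} for an orthonormal basis $\{e_1,\ldots,e_{\rank(L)}\}$ of $L\otimes\RR$ and summing over $i$ yields $\sum_\ell f(0,\ell)(\ell,\ell)=2C\cdot\rank(L)$, which is exactly \eqref{eq:q^0-term}. So it suffices to establish \eqref{eq:vectorsystem}.

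To establish this, I would fix $\mathfrak{z}\in L\otimes\CC$ and expand $\phi(\tau,t\mathfrak{z})$ as a power series in the scalar variable $t$. The second Taylor coefficient is
$$\phi^{(2)}(\tau,\mathfrak{z}):=[t^2]\phi(\tau,t\mathfrak{z})=-2\pi^2\sum_{n,\ell}f(n,\ell)(\ell,\mathfrak{z})^2\,q^n.$$
Substituting $t\mathfrak{z}$ for $\mathfrak{z}$ in the weight-$0$ Jacobi transformation law of $\phi$ and comparing $t^2$-coefficients, I would obtain
$$\phi^{(2)}\!\left(\tfrac{a\tau+b}{c\tau+d},\mathfrak{z}\right)=(c\tau+d)^2\phi^{(2)}(\tau,\mathfrak{z})+i\pi\,c(c\tau+d)(\mathfrak{z},\mathfrak{z})\,\phi_0(\tau),$$
where $\phi_0(\tau):=\phi(\tau,0)$ is $\SL_2(\ZZ)$-invariant (of weight $0$). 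Combined with the classical quasi-modularity $G_2\!\left(\tfrac{a\tau+b}{c\tau+d}\right)=(c\tau+d)^2 G_2(\tau)-\tfrac{c(c\tau+d)}{4\pi i}$, a short bookkeeping then forces the corrected combination
$$M(\tau,\mathfrak{z}):=\phi^{(2)}(\tau,\mathfrak{z})-4\pi^2\,G_2(\tau)\phi_0(\tau)(\mathfrak{z},\mathfrak{z})$$
to be a genuine weakly holomorphic modular form of weight $2$ on $\SL_2(\ZZ)$ in $\tau$ (quadratic in $\mathfrak{z}$).

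The final step will be the classical residue observation that every weakly holomorphic modular form of weight $2$ on $\SL_2(\ZZ)$ has vanishing constant Fourier coefficient: the associated meromorphic $1$-form $f(\tau)\,d\tau$ is holomorphic on $\HH$ and descends to a meromorphic $1$-form on the modular curve $X(1)\cong\PP^1$ whose only pole lies at $i\infty$, and the sum of residues on $\PP^1$ must vanish; since the residue at $i\infty$ equals $c_0/(2\pi i)$, one concludes $c_0=0$. Applying this vanishing to $M(\tau,\mathfrak{z})$ and extracting the $q^0$-coefficient using
$$[q^0]\phi^{(2)}=-2\pi^2\sum_\ell f(0,\ell)(\ell,\mathfrak{z})^2,\qquad[q^0](G_2\phi_0)=-C,$$
gives $-2\pi^2\sum_\ell f(0,\ell)(\ell,\mathfrak{z})^2+4\pi^2 C(\mathfrak{z},\mathfrak{z})=0$, which rearranges to \eqref{eq:vectorsystem}. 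The delicate point will be the precise bookkeeping of the quasi-modular anomaly: pinning down the exact constant $-4\pi^2$ multiplying $G_2\phi_0(\mathfrak{z},\mathfrak{z})$ is what makes the residue cancellation succeed, and once this coefficient is fixed the rest of the argument is routine.
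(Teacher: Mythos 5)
Your argument is correct, and it is essentially the standard proof of this lemma: the paper itself does not reprove it but defers to \cite[Proposition 2.6]{Gri18} (with the vector-valued antecedent \cite[Theorem 10.5]{Bor98}), and those proofs run exactly along your lines — Taylor expansion at $\mathfrak{z}=0$, correction of the second Taylor coefficient by $G_2\phi_0(\mathfrak{z},\mathfrak{z})$ to kill the quasi-modular anomaly, and vanishing of the constant term of the resulting weight-$2$ weakly holomorphic form, with \eqref{eq:q^0-term} recovered from \eqref{eq:vectorsystem} by taking the trace. Your coefficient bookkeeping (the factor $-4\pi^2$ against $G_2(\gamma\tau)=(c\tau+d)^2G_2(\tau)-\tfrac{c(c\tau+d)}{4\pi i}$) checks out.
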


\begin{remark}
Let $\Lambda$ be an even positive-definite unimodular lattice of rank $24$. Assume that the set $R_\Lambda$ of 2-roots of $\Lambda$ is non-empty.  Let $R(\Lambda)$ denote the root lattice generated by $R_\Lambda$. The theta-function for $\Lambda$ is a holomorphic Jacobi form of weight $12$ and index $\Lambda$. Thus, we have
$$
\psi_{0,\Lambda}(\tau,\mathfrak{z})=\frac{\Theta_\Lambda(\tau,\mathfrak{z})}{\Delta(\tau)}=q^{-1}+\sum_{r\in R_\Lambda} \zeta^{r}+ 24 +O(q) \in J_{0,\Lambda}^!,
$$
where $\Theta_\Lambda=\Theta_0^\Lambda$ (see \eqref{eq:ThetaFunction}).
By Lemma \ref{Lem:q^0-term}, we prove the identity $\sum_{r\in R_\Lambda} (r,\mathfrak{z})^2= 2h (\mathfrak{z}, \mathfrak{z})$. It follows that the lattice $R(\Lambda)$ has rank $24$ and all its irreducible components have the same Coxeter number. In this paper, we shall use the similar idea to classify 2-reflective modular forms.
\end{remark}

Using Lemma \ref{Lem:q^0-term}, we give a simple proof of \cite[Theorem 11.2]{Bor98} in the context of Jacobi forms.
\begin{corollary}
Let $\phi$ be a weakly holomorphic Jacobi form of weight $0$ and index $L$ with the Fourier expansion
$$
\phi(\tau,\mathfrak{z})=\sum_{n\in\ZZ}\sum_{\ell\in L^\vee}f(n,\ell)q^n\zeta^\ell.
$$ 
Assume that $f(n,\ell)\in \ZZ$ for all $n\leq 0$ and $\ell \in L^\vee$. Let $(n(L))$ denote the ideal of $\ZZ$ generated by $(x,y)$, $x,y\in L$. Then we have 
$$
\frac{n(L)}{24}\sum_{\ell\in L^\vee}f(0,\ell)\in \ZZ.
$$
\end{corollary}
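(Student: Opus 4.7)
\medskip

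\noindent\textbf{Proof proposal.}
The plan is to combine the two identities of Lemma~\ref{Lem:q^0-term} with a simple parity argument based on the symmetry $f(0,\ell)=f(0,-\ell)$ coming from the fact that $\phi$ has even weight~$0$. The key reduction is to show that the quantity
\[
C=\frac{1}{24}\sum_{\ell\in L^\vee}f(0,\ell)-\sum_{n<0}\sum_{\ell\in L^\vee}f(n,\ell)\sigma_1(-n)
\]
satisfies $n(L)\cdot C\in\ZZ$. Granting this, the hypothesis that $f(n,\ell)\in\ZZ$ for all $n\le 0$ makes the second sum above an integer, so multiplying the displayed identity by $n(L)$ yields
\[
\frac{n(L)}{24}\sum_{\ell\in L^\vee}f(0,\ell)=n(L)C+n(L)\sum_{n<0}\sum_{\ell\in L^\vee}f(n,\ell)\sigma_1(-n)\in\ZZ.
\]

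To prove that $n(L)\cdot C\in\ZZ$, I would start from the vector-system identity \eqref{eq:vectorsystem}. Both sides are quadratic forms on $L\otimes\CC$, so polarizing gives the bilinear identity
\[
\sum_{\ell\in L^\vee}f(0,\ell)(\ell,x)(\ell,y)=2C(x,y), \qquad x,y\in L\otimes\CC.
\]
Specializing to $x,y\in L$, every pairing $(\ell,x)$ and $(\ell,y)$ is an integer, and by hypothesis $f(0,\ell)\in\ZZ$, so the left-hand side is an integer. Hence $2C(x,y)\in\ZZ$ for all $x,y\in L$, which already gives $2C\cdot n(L)\in\ZZ$.

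The main point, and the only step that requires care, is to upgrade this by a factor of~$2$. Since $\phi$ has weight $0$, the Fourier coefficients satisfy $f(0,-\ell)=f(0,\ell)$, and for any nonzero $\ell\in L^\vee$ the element $-\ell$ is distinct from $\ell$. Pairing the terms $\ell$ and $-\ell$ in the sum and noting that the $\ell=0$ term contributes nothing, one obtains
\[
\sum_{\ell\in L^\vee}f(0,\ell)(\ell,x)(\ell,y)=2\!\!\!\sum_{\{\ell,-\ell\},\,\ell\ne 0}\!\!\!f(0,\ell)(\ell,x)(\ell,y)\in 2\ZZ.
\]
Thus $2C(x,y)\in 2\ZZ$, i.e.\ $C(x,y)\in\ZZ$ for every $x,y\in L$. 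Since $n(L)$ is the greatest common divisor of the values $(x,y)$ with $x,y\in L$, this forces $n(L)\cdot C\in\ZZ$, completing the proof. No step is really an obstacle: the only subtle point is recognizing that the symmetry $\ell\leftrightarrow -\ell$ produces the extra factor of~$2$ that rules out $C(x,y)\in\tfrac12\ZZ\setminus\ZZ$, and this is precisely where the evenness of the weight enters.
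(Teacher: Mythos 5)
Your proof is correct and follows essentially the same route as the paper: polarize \eqref{eq:vectorsystem}, use the symmetry $f(0,\ell)=f(0,-\ell)$ to see that $2C(x,y)\in 2\ZZ$ and hence $C(x,y)\in\ZZ$ for $x,y\in L$, deduce $n(L)C\in\ZZ$, and conclude via \eqref{eq:q^0-term}. The paper's proof is just a terser version of the same argument, with the pairing $\ell\leftrightarrow-\ell$ left implicit.
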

\begin{proof}
By \eqref{eq:vectorsystem}, we get
$$
\sum_{\ell\in L^\vee}f(0,\ell)(\ell,x)(\ell,y)=2C(x,y),\quad \forall x,y\in L.
$$
Since $f(0,\ell)=f(0,-\ell)\in\ZZ$, we get $C(x,y)\in \ZZ$ for all $x,y\in L$, which yields $n(L)C\in \ZZ$. We thus complete the proof by \eqref{eq:q^0-term}.
\end{proof}

We next introduce the Borcherds products. The input data of original Borcherds lifting are modular forms for the Weil representation. The constructed orthogonal modular forms have nice infinite product expansions at rational 0-dimensional cusps. By means of the isomorphism between modular forms for the Weil representation and Jacobi forms, Gritsenko and Nikulin \cite{GN98b} proposed a variant of Borcherds products, which lifts weakly holomorphic Jacobi forms of weight $0$ to modular forms on orthogonal groups.   In this case, any constructed modular form has a nice infinite product expansion at each rational 1-dimensional cusp and can be expressed as a product of a general theta block with the exponential of additive Jacobi lifting.

\begin{theorem}[Theorem 4.2 in \cite{Gri18}]\label{th:BorcherdsJF}
Let 
$$
\varphi(\tau,\mathfrak{z})=\sum_{n\in\ZZ}\sum_{\ell\in L^\vee}f(n,\ell)q^n \zeta^\ell \in J^{!}_{0,L}.
$$
Assume that $f(n,\ell)\in \ZZ$ for all $2n-(\ell,\ell)< 0$.   We set
\begin{align*}
&A=\frac{1}{24}\sum_{\ell\in L^\vee}f(0,\ell),& &\vec{B}=\frac{1}{2}\sum_{\ell>0} f(0,\ell)\ell,& &C=\frac{1}{2\rank(L)}\sum_{\ell\in L^\vee}f(0,\ell)(\ell,\ell).&
\end{align*}
Then the product
$$\Borch(\varphi)(Z)=q^A \zeta^{\vec{B}} \xi^C\prod_{\substack{n,m\in\ZZ, \ell\in L^\vee\\ (n,\ell,m)>0}}(1-q^n \zeta^\ell \xi^m)^{f(nm,\ell)}, $$
where $Z= (\tau,\mathfrak{z}, \omega) \in \cH(L)$, $q=\exp(2\pi i \tau)$, $\zeta^\ell=\exp(2\pi i (\ell, \mathfrak{z}))$, $\xi=\exp(2\pi i \omega)$, defines a meromorphic modular form of weight $f(0,0)/2$ for  $\widetilde{\Orth}^+(2U\oplus L(-1))$ with a character $\chi$ induced by
\begin{align*}
&\chi \lvert_{\SL_2(\ZZ)}=v_\eta^{24A},& &\chi \lvert_{H(L)}([\lambda,\mu; r])=e^{\pi i C((\lambda,\lambda)+(\mu, \mu )- (\lambda, \mu ) +2r)},& & \chi(V)=(-1)^D,&
\end{align*}
where $V: (\tau,\mathfrak{z}, \omega) \mapsto (\omega,\mathfrak{z},\tau)$ and $D=\sum_{n<0}\sigma_0(-n) f(n,0)$. The poles and zeros of $\Borch(\varphi)$ lie on the rational quadratic divisors $\cD_v(2U\oplus L(-1))$, where $v\in 2U\oplus L^\vee(-1)$ is a primitive vector with $(v,v)<0$. The multiplicity of this divisor is given by 
$$ \mult \cD_v(2U\oplus L(-1)) = \sum_{d\in \ZZ,d>0 } f(d^2n,d\ell),$$
where $n\in\ZZ$, $\ell\in L^\vee$ such that $(v,v)=2n-(\ell,\ell)$ and $v\equiv \ell\mod 2U\oplus L(-1)$.
Moreover, we have
$$ 
\Borch(\varphi)=\psi_{L,C}(\tau,\mathfrak{z})\xi^C \exp \left(-\Grit(\varphi) \right),$$
where $\Grit(\varphi)$ is the additive Jacobi lifting of $\varphi$ and the first Fourier--Jacobi coefficient is given by
\begin{equation}\label{FJtheta}
\psi_{L,C}(\tau,\mathfrak{z})=\eta(\tau)^{f(0,0)}\prod_{\ell >0}\left(\frac{\vartheta(\tau,(\ell,\mathfrak{z}))}{\eta(\tau)} \right)^{f(0,\ell)}.
\end{equation}
The Weyl vector of the Borcherds product is $(A,\vec{B},C)$. 
\end{theorem}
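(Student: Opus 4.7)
The plan is to reduce the theorem to Borcherds' original product theorem \cite{Bor98} by passing through the isomorphism $\Phi$ described in Section \ref{Sec:Jacobi}. Writing $\varphi=\Phi(F)$ identifies $\varphi\in J^!_{0,L}$ with a nearly holomorphic vector-valued modular form $F$ of weight $-\rank(L)/2$ for $\rho_{D(L)}$ and matches the singular Fourier coefficients $f(n,\ell)$ with the principal-part coefficients $c(\gamma,n-(\gamma,\gamma)/2)$ for $\gamma=[\ell]$. Borcherds' theorem applied to $F$ then produces a meromorphic modular form on $\widetilde{\Orth}^+(2U\oplus L(-1))$ of weight $c(0,0)/2=f(0,0)/2$ with an infinite product expansion around a $0$-dimensional cusp. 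The first real step is to rewrite this product at the $1$-dimensional cusp determined by $\latt{e,e_1}$ using the tube realization $\cH(L)$. In the coordinates $(q,\zeta,\xi)$, the positivity condition $(n,\ell,m)>0$ becomes the lexicographic positivity induced by a Weyl chamber of $U\oplus L(-1)$ adjacent to $e_1$, and the multiplicity of a rational quadratic divisor $\cD_v$ is obtained by collecting the integer multiples $dv$ that produce the same divisor.

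The Weyl vector $(A,\vec B,C)$ is read off from the zeroth-order behaviour of $\varphi$, and here Lemma \ref{Lem:q^0-term} is essential: identity \eqref{eq:q^0-term} gives two expressions for $C$, the ``spectral'' one in terms of principal coefficients and the ``geometric'' one $\tfrac{1}{2\rank(L)}\sum f(0,\ell)(\ell,\ell)$, while the quadratic identity \eqref{eq:vectorsystem} is exactly what is needed to make $\xi^C$ transform correctly and to define a coherent character on the Heisenberg group $H(L)$. The $\SL_2(\ZZ)$-character $v_\eta^{24A}$ comes from matching the transformation of $q^A$ under $\tau\mapsto-1/\tau$; the Heisenberg character follows from how $\zeta^{\vec B}\xi^C$ behaves under translations $\mathfrak{z}\mapsto\mathfrak z+\lambda\tau+\mu$, combined with \eqref{eq:vectorsystem}; and the sign $\chi(V)=(-1)^D$ arises, as in Borcherds' original argument, from the regularization needed to analytically continue the product across the involution $V$, with $D=\sum_{n<0}\sigma_0(-n)f(n,0)$ counting the symmetric contributions.

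The theta-block factorization $\Borch(\varphi)=\psi_{L,C}(\tau,\mathfrak z)\xi^C\exp(-\Grit(\varphi))$ is obtained by separating the infinite product according to powers of $\xi$: the $m=0$, $n=0$ factors $\prod_{\ell>0}(1-\zeta^\ell)^{f(0,\ell)}$, combined with $q^A\zeta^{\vec B}$ and the Jacobi triple product for $\vartheta/\eta$, reassemble into the theta block $\eta(\tau)^{f(0,0)}\prod_{\ell>0}(\vartheta(\tau,(\ell,\mathfrak z))/\eta(\tau))^{f(0,\ell)}$ of \eqref{FJtheta}, while the remaining factors with $n\geq 1$ or $m\geq 1$ exponentiate to $-\Grit(\varphi)$ once one recognizes the logarithmic expansion as the additive Jacobi lift of $\varphi$. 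The main obstacle I foresee is the careful bookkeeping required in passing between the $0$- and $1$-dimensional cusps, in particular controlling the Weyl chamber data, the convergence of the rewritten product, and the matching of characters; without the quadratic identity \eqref{eq:vectorsystem} one could not realise $\xi^C$ as a character-compatible factor, so Lemma \ref{Lem:q^0-term} is not merely a computational device but a structural ingredient on which the whole argument pivots.
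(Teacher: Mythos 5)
The paper does not prove this statement: it is imported verbatim as Theorem~4.2 of \cite{Gri18} (itself the Gritsenko--Nikulin reformulation of Borcherds' product theorem in the language of Jacobi forms), so there is no internal proof to compare against. Your sketch follows exactly the route taken in that source and in \cite{GN98b}: pass through the isomorphism $\Phi$ to a nearly holomorphic vector-valued form of weight $-\rank(L)/2$, invoke Borcherds' theorem from \cite{Bor98} for the modularity, weight $f(0,0)/2$, character, and divisor data, then re-expand the product at the $1$-dimensional cusp $\latt{e,e_1}$ to obtain the Weyl vector $(A,\vec B,C)$ and the factorization into theta block times $\exp(-\Grit(\varphi))$; your identification of Lemma~\ref{Lem:q^0-term} (in particular \eqref{eq:vectorsystem}, which makes $\{\ell; f(0,\ell)\}$ a vector system) as the structural ingredient behind the formula for $C$ and the Heisenberg character is correct and is precisely the remark the paper makes after the theorem. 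One small imprecision: the factors that exponentiate to $-\Grit(\varphi)$ are exactly those with $m\geq 1$; the $m=0$, $n\geq 1$ factors are already consumed by the Jacobi triple product in assembling $\eta(\tau)^{f(0,0)}\prod_{\ell>0}\left(\vartheta(\tau,(\ell,\mathfrak z))/\eta(\tau)\right)^{f(0,\ell)}$, so they should not be listed among the ``remaining'' factors. As a reduction to Borcherds' theorem your outline is sound; the unproved content is, as you note, the convergence and Weyl-chamber bookkeeping in the change of cusp, which the cited literature supplies.
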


We explain some notations in the above theorem. 
The odd Jacobi theta series $\vartheta$ is defined as 
$$
\vartheta(\tau,z) =q^{\frac{1}{8}}(\zeta^{\frac{1}{2}}-\zeta^{-\frac{1}{2}}) \prod_{n\geq 1} (1-q^{n}\zeta)(1-q^n \zeta^{-1})(1-q^n), \quad \zeta=e^{2\pi iz},
$$
which is a holomorphic Jacobi form of weight $1/2$ and index $1/2$ with a multiplier system of order $8$ in the sense of Eichler--Zagier (see \cite[Example 1.5]{GN98b}). The form $\eta$ is the Dedekind Eta function
$$
\eta(\tau)=q^{1/24}\prod_{n\geq 1}(1-q^n).
$$
By \eqref{eq:vectorsystem}, the finite multiset $X=\{\ell;f(0,\ell)\}$ from Theorem \ref{th:BorcherdsJF} forms a vector system defined in \cite[\S 6]{Bor95}. We define its Weyl chamber as a connected component of
$$
L\otimes \RR \backslash \Big( \bigcup_{x\in X\backslash \{0\}} \{ v\in L\otimes \RR: (x,v)=0\}  \Big).
$$
Let $W$ be a fixed Weyl chamber. For $\ell\in L^\vee$, we define an ordering on $L^\vee$ by
$$
\ell >0 \iff \exists\; w \in W \;\text{s.t.}\; (\ell, w)>0.
$$
The notation $(n,\ell,m)>0$ in Theorem \ref{th:BorcherdsJF} means that either $m>0$, or $m=0$ and $n>0$, or $m=n=0$ and $\ell <0$. 

We emphasize the fact that if $\Borch(\varphi)$ is holomorphic then its first Fourier--Jacobi coefficient $\psi_{L,C}$ (see \eqref{FJtheta}) is a holomorphic Jacobi form of weight $f(0,0)/2$ and index $L(C)$.

\section{Quasi pull-backs of modular forms}\label{sec:quasipullback}
In this section we introduce the quasi pull-backs of modular forms and employ this technique to construct many reflective modular forms.

Borcherds \cite{Bor95} constructed a modular form of singular weight and character $\det$ on $\Orth^+(\II_{2,26})$
$$
\Phi_{12}\in M_{12}(\Orth^+(\II_{2,26}),\det),
$$
where $\II_{2,26}$ is the unique even unimodular lattice of signature $(2,26)$. The function $\Phi_{12}$ is constructed as the Borcherds product of the inverse of $\Delta$ defined by \eqref{eq:Delta}
$$
1/\Delta(\tau)=q^{-1}+24+324q+3200q^2+\cdots
$$
and it is a modular form with complete 2-divisor i.e.
$$
\div(\Phi_{12})=\cH=\sum_{\substack{v\in \II_{2,26}/\{\pm 1\}\\ (v,v)=-2}} \cD_v(\II_{2,26}).
$$
By the Eichler criterion (see \cite[Proposition 4.1]{Gri18}), all $(-2)$-vectors in $\II_{2,26}$ form only one orbit with respect to $\Orth^+(\II_{2,26})$. We next introduce the quasi pull-back of $\Phi_{12}$. 

First we give a general property of rational quadratic divisors. Let $M$ be an even lattice of signature $(2,n)$ and let $T$ be a primitive sublattice of signature $(2,m)$ with $m<n$. Then the orthogonal complement $T_M^\perp$ is negative-definite and we have the usual inclusions 
$$
T\oplus T_M^\perp < M < M^\vee < T^\vee \oplus (T_M^\perp)^\vee.
$$
For $v\in M$ with $v^2<0$ we write 
$$
v=\alpha+\beta, \quad \alpha \in T^\vee, \; \beta \in (T_M^\perp)^\vee.
$$
Then we have
\begin{equation*}
 \cD(T)\cap \cD_v(M) = \begin{cases}
      \cD_\alpha(T),    &\text{if} \; \alpha^2<0,\\
      \emptyset,    &\text{if} \; \alpha^2\geq 0,\, \alpha\neq 0, \\
      \cD(T),    &\text{if} \; \alpha=0, \; \text{i.e.} \; v\in T_M^\perp.
      \end{cases}    
\end{equation*}

The next theorem was proved in \cite[Theorem 1.2]{BKPS98} and \cite[Theorems 8.3 and 8.18]{GHS13}.

\begin{theorem}\label{th: Borchpullback}
Let $T\hookrightarrow \II_{2,26}$ be a primitive non-degenerate sublattice of signature $(2,n)$ with $n\geq 3$, and let $\cD(T)\hookrightarrow \cD(\II_{2,26})$ be the corresponding embedding of the Hermitian symmetric domains. The set of $(-2)$-roots in the orthogonal complement
$$
R_{-2}(T^\perp)=\{ r\in \II_{2,26} : r^2=-2,\; (r,T)=0\}
$$
is finite. We put $N(T^\perp)=\frac{1}{2}|R_{-2}(T^\perp)|$. Then we have
\begin{equation}
\Phi_{12}\lvert_T=\frac{\Phi_{12}(Z)}{\prod_{r\in R_{-2}(T^\perp)/\pm 1 }(Z,r)}\Bigg\vert_{\cD(T)^\bullet} \in M_{12+N(T^\perp)}(\widetilde{\Orth}^+(T), \det),
\end{equation}
where in the product over $r$ we fix a finite system of representatives in $R_{-2}(T^\perp)/\pm 1$. The modular form $\Phi_{12}\lvert_T$ vanishes only on rational quadratic divisors of type $\cD_v(T)$ where $v\in T^\vee$ is the orthogonal projection of a $(-2)$-root $r\in \II_{2,26}$ on $T^\vee$ satisfying $-2\leq v^2 <0$. If the set $R_{-2}(T^\perp)$ is non-empty then the quasi pull-back $\Phi_{12}\lvert_T$ is a cusp form.
\end{theorem}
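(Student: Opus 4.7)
The plan is to verify four things in order: finiteness of $R_{-2}(T^\perp)$; holomorphicity of $\Phi_{12}|_T$ with the stated weight and character; the divisor description; and the cusp form property when $R_{-2}(T^\perp)\neq\emptyset$. The first is immediate, since $T^\perp$ has signature $(0,26-n)$ and is negative definite of finite rank, hence contains only finitely many vectors of any fixed norm.

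The holomorphicity is a matching of vanishing orders along $\cD(T)$. By the case analysis of $\cD(T)\cap\cD_v(\II_{2,26})$ recalled just before the theorem, the irreducible components of $\div(\Phi_{12})$ that contain the whole of $\cD(T)$ are exactly the $\cD_r(\II_{2,26})$ with $r\in R_{-2}(T^\perp)/\pm 1$. Since $\Phi_{12}$ vanishes simply along each of these, its total vanishing order along $\cD(T)$ is $N(T^\perp)$, and the denominator $\prod_r (Z,r)$, being a product of $N(T^\perp)$ linear forms each vanishing simply on some $\cD_r\supset\cD(T)$, has the same order. The quotient therefore extends holomorphically across $\cD(T)$, and its restriction to $\cD(T)^\bullet$ is nonzero. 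Homogeneity of each linear factor $(Z,r)$ in $Z$ contributes weight $-1$, giving total weight $12+N(T^\perp)$. Any $g\in\widetilde{\Orth}^+(T)$ lifts by the identity on $T^\perp$ to an element $\tilde g\in\widetilde{\Orth}^+(\II_{2,26})$ (the triviality of $g$ on $D(T)$ forces triviality on the gluing subgroup $\II_{2,26}/(T\oplus T^\perp)$), and such $\tilde g$ fixes each $r\in R_{-2}(T^\perp)$ individually, so the denominator is $\tilde g$-invariant and the quotient inherits the character $\det$.

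The divisor description follows from the same case analysis: for a $(-2)$-root $v\in\II_{2,26}$ not in $T^\perp$, writing $v=\alpha+\beta$ with $\alpha\in T^\vee\setminus\{0\}$ and $\beta\in(T^\perp)^\vee$, the intersection $\cD(T)\cap\cD_v(\II_{2,26})$ equals $\cD_\alpha(T)$ when $\alpha^2<0$ and is empty otherwise. Because $T^\perp$ is negative definite we have $\beta^2\leq 0$, whence $\alpha^2=-2-\beta^2\geq -2$, giving the claimed range $-2\leq\alpha^2<0$.

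The cusp form property is what I expect to be the main obstacle. My plan is to inspect the Fourier--Jacobi expansion of $\Phi_{12}|_T$ at each $1$-dimensional cusp of $T$. Such a cusp extends to a $1$-dimensional cusp of $\II_{2,26}$ associated with some Niemeier lattice $\Lambda$, and in those coordinates the first Fourier--Jacobi coefficient of $\Phi_{12}$ is, by Theorem \ref{th:BorcherdsJF}, the theta block $\eta(\tau)^{24}\prod_{r\in R_\Lambda^+}\vartheta(\tau,(r,\mathfrak{z}))/\eta(\tau)$. Restricting $\mathfrak{z}$ to $T\cap\Lambda$ makes the factors $\vartheta(\tau,(r,\mathfrak{z}))$ with $r\in R_{-2}(T^\perp)$ vanish identically, and the quasi pull-back divides by the corresponding linear terms $(Z,r)$; using the expansion $\vartheta(\tau,z)=-2\pi i z\,\eta(\tau)^3+O(z^3)$ near $z=0$, each such cancellation converts a factor $\vartheta(\tau,(r,\mathfrak{z}))/\eta(\tau)$ into a constant times $\eta(\tau)^{2}$. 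The resulting first Fourier--Jacobi coefficient therefore carries a global factor $\eta(\tau)^{24+2N(T^\perp)}$, whose $q$-order $1+N(T^\perp)/12>1$ forces the coefficient to vanish at $q=0$. A parallel argument at each $0$-dimensional cusp uses the infinite product expansion of $\Phi_{12}$ and the fact that its $q$-exponent satisfies $A\geq 1$. The delicate part, which I expect to require the most work, is translating the linear forms $(Z,r)$ into the Fourier--Jacobi coordinates at each boundary component and checking that the formal cancellation of $\vartheta$'s against $(Z,r)$'s is exact, so that the extra $\eta$-power genuinely appears in the restricted expansion.
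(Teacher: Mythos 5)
This theorem is not proved in the paper at all: it is quoted from \cite{BKPS98} and \cite{GHS13}, so there is no internal proof to compare with. Judged on its own terms, your treatment of finiteness, the weight and character bookkeeping, and the divisor description follows the standard lines (though you assert without justification that the quotient is not identically zero on $\cD(T)^\bullet$; this nonvanishing is part of what \cite{GHS13} actually has to prove, via dividing out one root at a time or inspecting a leading Fourier coefficient).

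The genuine gap is in the cusp form argument. Your criterion --- that the first Fourier--Jacobi coefficient of $\Phi_{12}\lvert_T$ acquires a factor $\eta^{24+2N(T^\perp)}$ and hence vanishes at $q=0$ --- does not establish vanishing on the one-dimensional boundary components of the Baily--Borel compactification, which is what cuspidality requires here. Indeed $\eta^{24}=\Delta$ already vanishes at $q=0$, so your test is passed by $\Phi_{12}$ itself (the case $N(T^\perp)=0$), and $\Phi_{12}$ is \emph{not} a cusp form: its restriction to the boundary curve attached to the Leech cusp is $\Delta\neq 0$. What controls vanishing on the boundary curve attached to an isotropic plane $E\subset T$ is the $\xi$-order of the Fourier--Jacobi expansion there, i.e.\ the constant $C$ of Theorem \ref{th:BorcherdsJF}, not the $q$-order of the leading coefficient: if $C=0$ the restriction to the curve is the index-zero coefficient $\psi_0(\tau)$ itself, and vanishing at $q=0$ only gives vanishing at the adjacent zero-dimensional cusps, not on the curve. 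The hypothesis $R_{-2}(T^\perp)\neq\emptyset$ has to enter exactly at this point: since $T^\perp$ injects into $E^\perp/E\cong\Lambda(-1)$ for every isotropic plane $E\subset T$, every Niemeier lattice $\Lambda$ arising from a one-dimensional cusp of $T$ has roots, hence is not the Leech lattice, hence has root system of full rank $24$, and this forces the $\xi$-order of the quasi pull-back to be strictly positive at every such cusp. One must also handle the zero-dimensional cusps separately (for small $n$ the lattice $T$ may contain no isotropic plane at all). The argument of \cite{GHS13} sidesteps all of this by working at zero-dimensional cusps: $\Phi_{12}$ has singular weight, so its Fourier coefficients are supported on isotropic $\lambda$; if the projection $\lambda_T$ is isotropic then $\lambda_{T^\perp}=0$ because $T^\perp$ is negative definite, whence $(\lambda,r)=0$ for every $r\in R_{-2}(T^\perp)$ and the corresponding Fourier coefficient of the quasi pull-back, which carries the factor $\prod_r(\lambda,r)$, vanishes. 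You should either import that argument or repair your Fourier--Jacobi one along the lines above.
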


In general, the quasi pull-back $\Phi_{12}\lvert_T$ is not a reflective modular form. To determine its divisor, we must do explicit calculations. We refer to \cite{Gra09} for this type of calculations.  We next introduce several arguments which can be used to seek reflective modular forms without complicated calculations. 

In \cite{Gri18} Gritsenko proposed 24 Jacobi type constructions of $\Phi_{12}$ based on  $24$ one-dimensional cusps of the modular variety $\Orth^+(\II_{2,26})\backslash \cD(\II_{2,26})$.  These components correspond exactly to the classes of positive-definite even unimodular lattices of rank $24$. They are the $23$ Niemeier lattices $N(R)$ uniquely determined by their root sublattices $R$ of rank $24$
\begin{align*}
&3E_8& &E_8\oplus D_{16}& & D_{24}& &2D_{12}& &3D_8& &4D_6&\\
&6D_4& &A_{24}& &2A_{12}& &3A_8& &4A_6& &6A_4& \\
&8A_3& &12A_2& &24A_1& &E_7\oplus A_{17}& &2E_7\oplus D_{10}& &4E_6& \\
&E_6\oplus D_7\oplus A_{11}& &A_{15}\oplus D_9& &2A_9\oplus D_6& &2A_7\oplus 2D_5& &4A_5\oplus D_4&
\end{align*}
and the Leech lattice $\Lambda_{24}$ without 2-roots which we will also denote by $N(\emptyset)$ (see \cite[Chapter 18]{SC98}). We next construct a lot of reflective modular forms by quasi pull-backs of $\Phi_{12}$ at different 1-dimensional cusps,  some already known, some new.

For convenience, we fix the discriminant groups of irreducible root lattices. Let $e_1,...,e_n$ be the standard basis of $\RR^n$.
\begin{enumerate}
\item For $A_1=\ZZ$ with the bilinear form $2x^2$, we fix
$A_1^\vee/ A_1 = \{ 0, 1/2 \}.$
\item For $D_n=\{ x\in \ZZ^n: \sum_{i=1}^n x_i \in 2\ZZ \}$ with $n\geq 4$, we fix
$D_n^\vee / D_n=\{[0], [1], [2], [3] \}$,
where $[1]=\frac{1}{2}\sum_{i=1}^n e_i$, $[2]=e_1$, $[3]=\frac{1}{2} \sum_{i=1}^n e_i - e_n$. Then $[1]^2=[3]^2=\frac{n}{4}$ and $[2]^2=1$.
\item For $A_n=\{ x\in \ZZ^{n+1}: \sum_{i=1}^{n+1} x_i =0 \}$ with $n\geq 2$, we fix
$ A_n^\vee /A_n=\{[i]: 0\leq i \leq n  \}$,
where $[i]=(\frac{i}{n+1},...,\frac{i}{n+1},\frac{-j}{n+1},...,\frac{-j}{n+1} )$ with $j$ components equal to $\frac{i}{n+1}$ and $i+j=n+1$. The norm of $[i]$ is $\frac{ij}{n+1}$.
\item The lattice $E_6$ is of level $3$. Its discriminant group is of order $3$ and generated by one element $[1]$ of norm $4/3$.
\item The lattice $E_7$ is of level $4$. Its discriminant group is of order $2$ and generated by one element $[1]$ of norm $3/2$.
\end{enumerate}

We consider the special case $T=2U\oplus K(-1)$, where $K$ is a primitive sublattice of some $N(R)$. The theta-function $\Theta_{N(R)}$ is a holomorphic Jacobi form of weight $12$ and index $N(R)$. The input of $\Phi_{12}$ as a Jacobi form at the $1$-dimensional cusp related to $2U\oplus N(R)$ is defined by
$$
\varphi_{0,N(R)}(\tau,\mathfrak{z})=\frac{\Theta_{N(R)}(\tau,\mathfrak{z})}{\Delta(\tau)}=q^{-1}+24+\sum_{r\in R, r^2=2}e^{2\pi i (r,\mathfrak{z})}+O(q)\in J_{0,N(R)}^{!}.
$$
We write $\mathfrak{z}=\mathfrak{z}_1+\mathfrak{z}_2$ with $\mathfrak{z}_1\in K\otimes \CC$ and $\mathfrak{z}_2\in K_{N(R)}^\perp\otimes \CC$ and define the pull-back of $\varphi_{0,N(R)}$ on the lattice $K\hookrightarrow N(R)$ as
\begin{align*}
\varphi_{0,K}(\tau,\mathfrak{z}_1)=\varphi_{0,N(R)}(\tau,\mathfrak{z})\lvert_{\mathfrak{z}_2=0} \in J_{0,K}^{!}.
\end{align*}
By expanding $\Phi_{12}$ on the tube domain $\cH(N(R))$ and using its expression described in Theorem \ref{th:BorcherdsJF}, we see that $\Phi_{12}|_T$ equals $\Borch(\varphi_{0,K})$ up to a constant multiple (the vector $r$ in Theorem \ref{th: Borchpullback} is a $(-2)$-root of $K_{N(R)}^\perp$, $(Z,r)$ reduces to $(\mathfrak{z}_2, r)$ and it is canceled by the term $(1-\zeta^{r})$ in the product expansion of $\Phi_{12}$). This identity also follows from the general result \cite[Remark 3.5]{Ma19}.

\subsection{The first argument.}
This argument was due to Gritsenko and Nikulin.
In \cite{GN18}, they constructed modular forms with complete 2-divisor by quasi pull-backs of $\Phi_{12}$. We recall their main ideas such that readers can understand the other arguments better.

For an even positive-definite lattice $L$, we define the $\norm_2$ condition as
\begin{equation}\label{Norm2}
\norm_2:\quad \forall\; \bar{c} \in L^{\vee}/L,\quad \exists\, h_c \in \bar{c} \quad \text{s.t.} \quad 0 \leq h_c^2  \leq 2.
\end{equation}
The reason why we formulated the $\norm_2$ condition is the following. If $L$ satisfies the $\norm_2$ condition and $\phi$ is a weakly holomorphic Jacobi form of index $L$, then its singular Fourier coefficients are totally determined by the $q^n$-terms with non-positive $n$. 
\begin{proof}[Proof of the claim]
It is known that $f(n,\ell)$ depends only on the number $2n-(\ell, \ell)$
and the class $\ell \mod L$.
Suppose that $f(n,\ell)$ is singular, i.e. $2n-(\ell, \ell)<0$.
There exists a vector 
$\ell_1 \in L^\vee$ such that $(\ell_1,\ell_1)\leq 2$ 
and $\ell - \ell_1\in L$ because $L$ satisfies the $\norm_2$ condition. It is
clear that $(\ell, \ell)-(\ell_1 , \ell_1 )$ is an even integer.
If $-2\leq 2n-(\ell, \ell)< 0$, it follows that 
$2n-(\ell, \ell)=-(\ell_1,\ell_1)$ and $f(n,\ell)=f(0,\ell_1)$. 
If $2n-(\ell, \ell)<-2$, then 
there exists a negative integer $n_1$ satisfying 
$2n-(\ell, \ell)=2n_1-(\ell_1,\ell_1)$. Thus there exists a Fourier 
coefficient $f(n_1,\ell_1)$ with negative $n_1$ such that $f(n_1,\ell_1)=f(n,\ell)$.
\end{proof}

Let $K$ be a primitive sublattice of $N(R)$ containing a direct summand of $R$ which has the same rank as $K$, or let $K$ be a primitive sublattice of the Leech lattice $\Lambda_{24}=N(\emptyset)$. We assume that $K$
satisfies the $\norm_2$ condition. Recall $T=2U\oplus K(-1)$.  In this case, we have
$$
\varphi_{0,K}(\tau,\mathfrak{z}_1)= q^{-1}+24+ n_K + \sum_{r\in K,\, r^2=2}e^{2\pi i (r,\mathfrak{z}_1)}+O(q) \in J_{0,K}^{!},
$$
where $n_K$ is the number of $2$-roots in $R$ orthogonal to $K$. Since $K$ satisfies the $\norm_2$ condition, the singular Fourier coefficients of $\varphi_{0,K}$ are represented by its $q^{-1}$ and $q^0$-terms. We then see that $\Borch(\varphi_{0,K})$ vanishes precisely on the divisor $\cH$ defined in \eqref{Heegner}. Therefore, $\Phi_{12}|_T$ is a modular form with complete $2$-divisor. In this way, Gritsenko and Nikulin \cite[Theorems 4.3, 4.4]{GN18}  constructed the following modular forms with complete $2$-divisor.

\begin{table}[ht]
\caption{Reflective cusp forms with complete 2-divisor}
\label{tab: GN1}
\renewcommand\arraystretch{1.5}
\noindent\[
\begin{array}{|c|c|c|c|c|c|c|c|c|c|c|c|c|c|c|}
\hline 
\text{lattice}& A_1& 2A_1& 3A_1& 4A_1& N_8& A_2& 2A_2& 3A_2& A_3& 2A_3& A_4& A_5\\
\text{weight}& 35& 34& 33& 32& 28& 45& 42& 39& 54& 48& 62& 69\\
\hline
\text{lattice}& A_6& A_7& D_4& 2D_4&D_5& D_6&D_7&D_8&E_6&E_7&E_8&2E_8\\
\text{weight}& 75& 80& 72& 60& 88&102&114&124&120&165&252&132\\
\hline
\end{array} 
\]
\end{table}
Here, $N_8$ is the Nikulin lattice defined as (see \cite[Example 4.3]{GN18})
\begin{equation}\label{eq:Nikulin lattice}
N_8=\latt{8A_1, h=(a_1+\cdots + a_8)/2}\cong D_8^\vee (2),
\end{equation}
where $(a_i,a_j)=2\delta_{ij}$, $(h,h)=4$. The root sublattice generated by the 2-roots of $N_8$ is $8A_1$.

When $K$ is one of the following $10$ sublattices of the Leech lattice $\Lambda_{24}$
\begin{align*}
&A_1(2)& &A_1(3)& &A_1(4)& &2A_1(2)& &A_2(2)& 
&A_2(3)& &A_3(2)& &D_4(2)& &E_8(2)& &A_4^\vee(5)&
\end{align*}
there exists a (non-cusp) modular form of weight $12$ with complete 2-divisor for $2U\oplus K(-1)$. The fact that  $A_4^\vee(5)$ satisfies the $\norm_2$ condition was proved in \cite{GW17, GW18}.

\subsection{The second argument.}

This argument was formulated in \cite{Gri18}. We here describe it in a more understandable way and use it to construct much more 2-reflective and reflective modular forms. This argument is based on the following observation.

\bigskip
\noindent
\textbf{Observation.} Let $L=A_1, 2A_1, A_2, D_4, A_1(2)$. For any $0\neq \gamma \in L^\vee/L$, the minimal norm vector $v$ in $\gamma +L$, i.e. $v$ satisfying
$$
(v,v)\leq (u,u), \quad \text{for all $u\in \gamma +L$},
$$
is reflective, namely $\sigma_v \in \Orth(L)$. When $L=A_1$, $\sigma_v\in \widetilde{\Orth}(L)=\Orth(L)$. 
\bigskip

Let $K=K_0\oplus K_1 \oplus K_2$ be a primitive sublattice of $N(R)$. We assume:
\begin{itemize}
    \item[(i)] The lattice $K_0$ contains a direct summand of $R$ which has the same rank as $K_0$;
    \item[(ii)] The lattices $K_1$, $K_2$ take $A_1$, $2A_1$, $A_2$, $D_4$ or $A_1(2)$, and they are contained in different direct summands of $R$. The second lattice $K_2$ is allowed to be empty;
    \item[(iii)] The lattice $K$ satisfies the $\norm_2$ condition. 
\end{itemize}

Recall $T=2U\oplus K(-1)$. Again, we consider the pull-back of $\varphi_{0,N(R)}$ on $K\hookrightarrow N(R)$. The above assumptions guarantee that the singular Fourier coefficients of $\varphi_{0,K}$ are totally determined by its $q^{-1}$, $q^0$-terms and correspond to reflective divisors.  Therefore, the quasi pull-back $\Phi_{12}\lvert_T$, i.e. $\Borch(\varphi_{0,K})$ is a reflective modular form. 

We first use this argument to construct $2$-reflective modular forms. To do this, we can only take $K_1, K_2= \emptyset$ or $A_1$.  Let $R=3E_8$, $K_0=2E_8$ and $K_1=A_1$ contained in the third copy of $E_8$. Then the quasi pull-back $\Phi_{12}\lvert_T$ gives a $2$-reflective modular form for $2U\oplus 2E_8(-1)\oplus A_1(-1)$. Similarly, when $K$ equals one of the following $16$ lattices
\begin{align*}
&2E_8\oplus A_1& &E_8\oplus A_1& &E_8\oplus 2A_1& &D_6\oplus A_1& &D_4\oplus A_1& &D_4\oplus 2A_1& &A_4\oplus A_1& &A_3\oplus A_1&\\
&A_3\oplus 2A_1&  &A_2\oplus A_1& &A_2\oplus 2A_1& &2A_2\oplus A_1& &D_5\oplus A_1& &A_5\oplus A_1& &E_7\oplus A_1& &E_6\oplus A_1&
\end{align*}
the quasi pull-back $\Phi_{12}\lvert_T$ gives a $2$-reflective modular form on $T$.

We then use this argument to construct reflective modular forms.  For instance, let $R=3E_8$, $K_0=E_8$ and $K_1=K_2=D_4$ contained in the second and third copies of $E_8$ respectively. Then $\Phi_{12}\lvert_T$ gives a reflective modular form for $2U\oplus E_8(-1)\oplus 2D_4(-1)$. When $K$ is equal to one of the following $33$ lattices, $\Phi_{12}\lvert_T$ is a reflective modular form on $T$.
\begin{enumerate}
\item When $R=3E_8$, the lattice $K$ is equal to
\begin{align*}
&\{E_8, 2E_8\}\oplus \{2A_1, A_2, D_4, A_1(2)\}& &E_8\oplus A_1 \oplus \{2A_1, A_2, D_4, A_1(2) \}&\\
&E_8\oplus 2A_1 \oplus \{2A_1, A_2, D_4, A_1(2) \}& &E_8\oplus A_2 \oplus \{A_2, D_4, A_1(2) \}& \\
&E_8\oplus D_4 \oplus \{D_4, A_1(2) \}& &E_8\oplus 2A_1(2).&
\end{align*}
\item  When $R=6D_4$, the lattice $K$ is equal to
$D_4\oplus \{ A_2, D_4, A_1(2) \}.$
\item  When $R=6A_4$, the lattice $K$ is equal to $A_4\oplus A_2$. 
\item When $R=8A_3$, the lattice $K$ is equal to
$A_3\oplus \{ A_2, A_1(2) \}.$
\item When $R=12A_2$, the lattice $K$ is equal to $A_2\oplus A_1(2)$.
\item When $R=24A_1$, the lattice $K$ is equal to $\{A_1, 2A_1\}\oplus A_1(2)$.
\item When $R=4E_6$, the lattice $K$ is equal to $E_6\oplus A_2$.
\item When $R=2A_7\oplus 2D_5$, the lattice $K$ is equal to $D_5\oplus A_2$.
\end{enumerate}

For (5) and (6), the constructions are a bit different. We take $A_1(2)$ in (5) as a sublattice of $2A_2$ and take $A_1(2)$ in (6) as a sublattice of $2A_1$. They can also be constructed in another way. For example, to construct a reflective modular form for $2U\oplus A_2\oplus A_1(2)$, we can use the pull-back $A_2\oplus A_1(2)< N(6D_4)$.
Remark that for some lattice we may construct more than one reflective modular forms. In the above, we focus on reflective lattices and only construct one reflective modular form for a certain lattice.

\subsection{The third argument.}

We now consider the general case, i.e. the lattice $K$ does not satisfy the $\norm_2$ condition. Assume that the lattice $K$ satisfies conditions (i) and (ii) of the second argument. We further assume that the minimum norm of vectors in any nontrivial class of the discriminant group of $K$ is less than $4$ and all vectors (noted by $v$) of minimum norm larger than $2$ satisfy the condition: \textit{the vector $(0,1,v,1,0)$ is reflective.}
In this case, the singular Fourier coefficients of $\varphi_{0,K}$ are determined by its $q^{-1}$, $q^0$, $q^1$-terms and correspond to reflective divisors. Therefore, the quasi pull-back $\Phi_{12}\lvert_T$, i.e. $\Borch(\varphi_{0,K})$ is a reflective modular form. 
\begin{enumerate}
\item When the lattice $K$ is equal to one of the following $8$ lattices, we get $2$-reflective lattices.
\begin{align*}
&5A_1& &D_{10}& &N_8\oplus A_1&  &D_8\oplus A_1& \\
&D_6\oplus 2A_1& 
&2D_4\oplus A_1&  &E_7\oplus 2A_1&  &D_4\oplus 3A_1& 
\end{align*}
For the last lattice, we use $R=6D_4$ and take $3A_1$ from three different copies of $D_4$.

\item  When the lattice $K$ is equal to one of the following $12$ lattices, we get reflective lattices.
$$
4A_2 \quad 3D_4 \quad 2E_6 \quad 2E_7 \quad A_8 \quad A_9 \quad D_9 \quad A_5\oplus D_4 \quad D_{12} \quad 2A_4 \quad 2D_5 \quad 2D_6
$$
There are a lot of this type of reflective lattices. In the above, we only consider the simplest case $K=K_0$. By \cite[\S 9]{Sch04}, the lattice $2A_2(2)$ is a primitive sublattice of the Leech lattice and it satisfies our condition. Thus the quasi pull-back gives a reflective modular form of weight $12$ for $2U\oplus 2A_2(-2)$.
\end{enumerate}

\subsection{The fourth argument.}\label{subsec:4th argument}

We can also consider the quasi pull-backs of some other reflective modular forms. We have known that $2U\oplus 2E_8\oplus D_4$ is reflective. It is easy to check  that
$$
2U\oplus 2E_8\oplus D_4 \cong 2U\oplus E_8 \oplus D_{12}
$$
because they have the same discriminant form. For $4\leq n \leq 10$, we have $D_n\oplus D_{12-n} < D_{12}$. In a similar way, we show that the quasi pull-back of $2U\oplus E_8 \oplus D_{12}$ into $2U\oplus E_8\oplus D_n$ will give a reflective modular form on $2U\oplus E_8 \oplus D_n$ with $4\leq n \leq 10$. 

\subsection{The fifth argument.}

This argument relies on the construction of the Niemeier lattice $N(R)$ from some root lattice $R$. We will explain the main idea by considering several interesting examples. 

\smallskip
\textbf{(1)} Let $R=6D_4$. We consider its sublattice $K=D_4\oplus 5A_1$, where every $A_1$ is contained in a different copy of $D_4$. The singular Fourier coefficients of $\varphi_{0,K}$ are determined by its $q^{-1}$, $q^0$, $q^1$-terms. It is clear that the $q^{-1}$ and $q^0$-terms correspond to 2-reflective divisors. We next consider the $q^1$-term, which is the pull-back of vectors of norm $4$ in $N(6D_4)$. Since the pull-backs of vectors of norm $4$ in $6D_4$ gives either non-singular Fourier coefficients or singular Fourier coefficients equivalent to that of the $q^0$-term, we only need to consider the pull-backs of vectors of norm $4$ in $N(6D_4)$ and not in $6D_4$.  This type of vectors has the form $[i_1]\oplus[i_2]\oplus [i_3] \oplus [i_4]\oplus [i_5]\oplus [i_6]$, here four of the six indices are non-zero.  Its pull-back to $D_4\oplus 5A_1$ only gives the singular Fourier coefficients of type $[i]\oplus (\frac{1}{2},\frac{1}{2}, \frac{1}{2}, 0, 0 )$, which correspond to $2$-reflective divisors. Therefore, $\Borch(\varphi_{0,K})$ is a 2-reflective modular form for $2U\oplus D_4\oplus 5A_1$. 

Similarly, the quasi pull-back on $6A_2<N(6D_4)$ gives a reflective modular form for $2U\oplus 6A_2$.

\smallskip
\textbf{(2)} Let $R=8A_3$. We consider its sublattice $K=8A_1$, where every $A_1$ is contained in a different copy of $A_3$. The singular Fourier coefficients of $\varphi_{0,K}$ are determined by its $q^{-1}$, $q^0$, $q^1$-terms. It is obvious that the $q^{-1}$ and $q^0$-terms correspond to 2-reflective divisors. We next consider the $q^1$-term. The $q^1$-term is the pull-back of vectors of norm $4$ in $N(8A_3)$. Due to the similar reason, we only need to consider the pull-back of vectors of norm $4$ in $N(8A_3)$ and not in $8A_3$. This type of vectors has the form $[2]\oplus[2]\oplus [2] \oplus [2]\oplus 0^4$ or $[2]\oplus[1]\oplus [1] \oplus [1]\oplus [1]\oplus 0^3$.  Their pull-backs to $8A_1$ only give the singular Fourier coefficients of type $(\frac{1}{2},\frac{1}{2}, \frac{1}{2},\frac{1}{2},\frac{1}{2}, 0, 0,0 )$, which correspond to $2$-reflective divisors. Therefore,  $\Borch(\varphi_{0,K})$ is a 2-reflective modular form for $2U\oplus 8A_1$.

\smallskip
\textbf{(3)} Let $R=12A_2$. We consider its sublattice $K=12A_1$, where every $A_1$ is contained in a different copy of $A_2$. The singular Fourier coefficients of $\varphi_{0,K}$ are determined by its $q^{-1}$, $q^0$, $q^1$, $q^2$-terms. Firstly, the $q^{-1}$ and $q^0$-terms correspond to 2-reflective divisors. We next consider the $q^1$ and $q^2$-terms. To find singular Fourier coefficients in the $q^1$-term, we only need to consider the pull-back of vectors of norm $4$ in $N(12A_2)$ and not in $12A_2$. This type of vectors has the form $[1]^6 \oplus 0^6$.  Its pull-back to $12A_1$ only gives the singular Fourier coefficients of type $(\frac{1}{2},\frac{1}{2}, \frac{1}{2},\frac{1}{2},\frac{1}{2}, 0^7)$ or $(\frac{1}{2},\frac{1}{2}, \frac{1}{2},\frac{1}{2},\frac{1}{2},\frac{1}{2}, 0^6)$, which all correspond to reflective divisors. The $q^2$-term is the pull-back of vectors of norm $6$ in $N(12A_2)$. This type of vectors has the form $[1]^9 \oplus 0^3$.  Its pull-back to $12A_1$ only gives the singular Fourier coefficients of type $(\frac{1}{2},\frac{1}{2}, \frac{1}{2},\frac{1}{2},\frac{1}{2}, \frac{1}{2},\frac{1}{2}, \frac{1}{2},\frac{1}{2}, 0^3)$, which correspond to 2-reflective divisors.
Therefore, $\Borch(\varphi_{0,K})$ is a reflective modular form for $2U\oplus 12A_1$.

\smallskip
Using the idea of pull-backs, it is easy to prove the following result.
\begin{lemma}\label{Lem:pullbacktest}
Let $M$ be an even lattice of signature of $(2,n)$ and $L$ be an even negative-definite lattice. If $M\oplus L$ is reflective (resp. $2$-reflective), then $M$ is reflective (resp. $2$-reflective) too.
\end{lemma}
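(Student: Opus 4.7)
I would realize $M$ as a primitive sublattice of $M\oplus L(-1)$ with negative-definite orthogonal complement $L(-1)$ and apply a quasi pull-back in the spirit of Theorem~\ref{th: Borchpullback}. Let $F$ be a reflective (resp.\ $2$-reflective) modular form on $M\oplus L(-1)$ and let $R$ denote the (finite) set of primitive reflective (resp.\ primitive $(-2)$-) vectors in $L(-1)$; these are precisely the vectors whose rational quadratic divisor $\cD_r$ entirely contains $\cD(M)$. Writing $\nu(r)$ for the multiplicity of $\cD_r$ in $\div(F)$ and $R^+$ for a choice of $\pm$-representatives of $R$, the function
\[
F\big|_M(Z)=\frac{F(Z)}{\prod_{r\in R^+}(Z,r)^{\nu(r)}}\bigg|_{\cD(M)^\bullet}
\]
is a nonzero holomorphic modular form on $\cD(M)$ for $\widetilde{\Orth}^+(M)$ by the usual quasi pull-back argument.

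The key step is to control the zero divisor of $F|_M$. Any other primitive vector $v=\alpha+\beta\in M\oplus L(-1)$ in the support of $\div(F)$, with $\alpha\in M$ and $\beta\in L(-1)$, satisfies $\cD_v\cap\cD(M)=\emptyset$ when $\alpha^2\ge 0$ and $\alpha\ne 0$, and $\cD_v\cap\cD(M)=\cD_\alpha(M)$ when $\alpha^2<0$, by the dichotomy recorded before Theorem~\ref{th: Borchpullback}. In the $2$-reflective case $v^2=-2$, together with $\beta^2\le 0$ and $\alpha^2\in 2\ZZ_{<0}$, forces $\alpha^2=-2$ and $\beta=0$, so that $v=\alpha$ is itself a primitive $(-2)$-vector of $M$; hence $F|_M$ is $2$-reflective.

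For the reflective case I would write $\alpha=cw$ with $c\in\ZZ_{>0}$ and $w\in M$ primitive, and set $a=\div_M(w)$, $w^2=-2d'$, $v^2=-2d$. The condition $\beta^2\le 0$ gives $d\ge c^2d'$, while reflectivity of $v$, namely $\div(v)=\gcd(ca,\div_L(\beta))\in\{d,2d\}$, yields $d\mid ca$ and hence $a\ge cd'$; combined with $a\mid w^2=-2d'$, this squeezes $c\in\{1,2\}$ and $a\in\{d',2d'\}$, which is exactly the criterion for $w$ to be reflective in $M$. Therefore $\cD_\alpha(M)=\cD_w(M)$ is a reflective divisor of $\cD(M)$, so $F|_M$ is reflective. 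The main subtlety is this divisibility bookkeeping in the reflective case; once the squeeze $cd'\le a\le 2d'$ is established, the reflectivity of the projection is automatic, while the $2$-reflective statement needs only the orthogonal decomposition of norms.
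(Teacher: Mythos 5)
Your proposal is correct and follows exactly the route the paper intends: the paper offers no written proof beyond the remark that the lemma follows ``using the idea of pull-backs,'' i.e.\ the quasi pull-back of Theorem~\ref{th: Borchpullback} applied to $M\hookrightarrow M\oplus L(-1)$ with negative-definite complement $L(-1)$. Your divisibility bookkeeping (the squeeze $cd'\le a\le 2d'$ forcing $c\in\{1,2\}$ and $\div_M(w)\in\{d',2d'\}$) supplies precisely the verification, omitted in the paper, that the $M$-projections of reflective vectors remain reflective, and the $2$-reflective case correctly reduces to $\beta=0$ by parity of norms in the orthogonal direct sum.
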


As an application of this lemma, we construct more reflective modular forms. We check that
$$
U\oplus U(2)\oplus D_4 < U\oplus U(2)\oplus D_4\oplus D_4 \cong 2U\oplus N_8,
$$
which yields that $U\oplus U(2)\oplus D_4$ is 2-reflective. Similarly, we claim that $U\oplus U(3)\oplus A_2$ is reflective because
$$
U\oplus U(3)\oplus A_2 < U\oplus U(3)\oplus A_2\oplus E_6 \cong 2U\oplus 4A_2.
$$

For any reflective modular form constructed in this section, it is possible to work out the weight and the multiplicities of zero divisors by the methods used in the next section.

\section{Classification of 2-reflective modular forms}\label{Sec:classification2}
In this section we use the approach based on Jacobi forms to classify $2$-reflective modular forms on lattices containing two hyperbolic planes. 

\subsection{Known results}
We first review some results proved in \cite{Wan18}. Let $M=2U\oplus L(-1)$ be an even lattice of signature $(2,\rank(L)+2)$. Let $\pi_M\subset D(M)$ be the subset of elements of order $2$ and norm $-1/2$. For each $\mu \in \pi_M$ we abbreviate $\cH(\mu,-1/4)$ by $\cH_\mu$. We also set 
\begin{equation}\label{eq:H0}
\cH_0 = \bigcup_{\substack{\ell \in M, (\ell,\ell)=-2\\ \div (\ell)=1}} \ell^{\perp}\cap \cD(M). 
\end{equation}
Then we have the following decomposition
\begin{equation}
\cH= \cH_0+ \sum_{\mu \in \pi_M} \cH_\mu.
\end{equation}
Assume that $F$ is a $2$-reflective modular form of weight $k$ for $M$. Then its divisor can be written as 
\begin{equation}\label{eq:divisor}
\div (F)= \beta_0 \cH_0 + \sum_{\mu \in \pi_M} \beta_\mu \cH_\mu 
         =\beta_0 \cH + \sum_{\mu \in \pi_M} (\beta_\mu-\beta_0) \cH_\mu,
\end{equation}
where $\beta_*$ are non-negative integers.
By \cite[Theorem 5.12]{Bru02} or \cite[Theorem 1.2]{Bru14}, the modular form $F$ should be a Borcherds product. In view of the isomorphism between vector-valued modular forms and Jacobi forms, there exists a weakly holomorphic Jacobi form $\phi_L$ of weight $0$ and index $L$ with singular Fourier coefficients of the form 
\begin{equation}\label{F2}
\sing(\phi_L)= \beta_0 \sum_{r\in L}q^{(r,r)/2-1}\zeta^r+ \sum_{\mu \in \pi_M}(\beta_\mu - \beta_0)\sum_{s\in L+ \mu}q^{(s,s)/2-1/4}\zeta^s,
\end{equation}
where $\zeta^l=e^{2\pi i (l,\mathfrak{z})}$.
We thus obtain
\begin{equation}\label{F3}
\phi_L(\tau,\mathfrak{z})= \beta_0 q^{-1}+\beta_0 \sum_{r\in R_L}\zeta^r +2k+ \sum_{\substack{u\in \pi_M}} (\beta_\mu -\beta_0) \sum_{s\in R_\mu (L)}\zeta^s + O(q),
\end{equation}
here $R_L$ is the set of 2-roots of $L$ and 
\begin{equation}\label{F4}
R_\mu (L)=\{ s\in L^\vee : 2s\in R_L, s\in L+\mu \}.
\end{equation}

In \cite[Theorem 3.2]{Wan18}, we proved that the weight of $F$ is given by
\begin{equation}\label{eq:weight}
k=\beta_0\left[ 12+ \lvert R_L \rvert \left(\frac{12}{\rank(L)}-\frac{1}{2} \right) \right]
+\left(\frac{3}{\rank(L)}-\frac{1}{2} \right)\sum_{\mu \in \pi_M} (\beta_\mu - \beta_0 )\lvert R_\mu (L) \rvert.
\end{equation}

\subsection{Nonexistence of 2-reflective lattice of signature (2,14)} 
We refine the proof of \cite[Theorem 3.6]{Wan18} to demonstrate the following result.

\begin{theorem}\label{th:nonsign12}
There is no $2$-reflective lattice of signature $(2,14)$.
\end{theorem}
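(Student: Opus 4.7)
The plan is to follow the Jacobi form strategy sketched in \S\ref{Sec:classification2} and refine the argument from \cite[Theorem 3.6]{Wan18} for the specific rank $\rank(L)=12$. First I would reduce to the case $M=2U\oplus L(-1)$ with $L$ a positive-definite even lattice of rank $12$. Indeed, by Lemma \ref{Lem:reductionMa} one may pass to a maximal even overlattice, and Lemma \ref{lem:2U} together with the fact that a maximal overlattice has discriminant form of small length guarantees (after possibly enlarging once more) that such an overlattice splits off two hyperbolic planes. Given a $2$-reflective form $F$ of weight $k$ on $M$, formulas \eqref{eq:divisor}--\eqref{F3} produce a weakly holomorphic Jacobi form
\[
\phi_L=\beta_0 q^{-1}+\Bigl(2k+\beta_0\sum_{r\in R_L}\zeta^r+\sum_{\mu\in\pi_M}(\beta_\mu-\beta_0)\sum_{s\in R_\mu(L)}\zeta^s\Bigr)+O(q)
\]
of weight $0$ and index $L$, to which Lemma \ref{Lem:q^0-term} applies.

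The heart of the argument is the quadratic identity \eqref{eq:vectorsystem}, which for our $\phi_L$ reads
\[
\beta_0\sum_{r\in R_L}(r,\mathfrak{z})^2+\sum_{\mu\in\pi_M}(\beta_\mu-\beta_0)\sum_{s\in R_\mu(L)}(s,\mathfrak{z})^2=2C(\mathfrak{z},\mathfrak{z}),
\]
together with the weight formula \eqref{eq:weight} and the scalar identity in \eqref{eq:q^0-term}. I will split into two cases according to whether $R_L$ is empty. If $R_L\neq\emptyset$, then by Theorem \ref{th:2-reflective} the set $R_L$ spans $L\otimes\RR$ and every irreducible component of $R_L$ which is not of type $A_1$ has a common Coxeter number $h$; combined with Proposition \ref{prop:root} and the rank constraint $\rank(L)=12$, this leaves only finitely many candidate root systems (for example $12A_1$, $8A_1\oplus D_4$, $4A_1\oplus E_8$, $6A_1\oplus E_6$, $4A_1\oplus 2D_4$, $5A_1\oplus E_7$, $2E_6$, $3D_4$, and a short list of further configurations with a fixed Coxeter number). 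For each such candidate I would compute the constant $C$ from the first identity in \eqref{eq:q^0-term}, compare it with the value $\beta_0 h$ (or the weighted sum thereof) forced by \eqref{eq:vectorsystem}, and then test compatibility with the weight formula and with the non-negativity and integrality of the multiplicities $\beta_\mu$; this is where I expect most configurations to die because the genus list in Theorem \ref{th:main2reflective} contains no entry with $n=14$.

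For the candidates that survive these numerical tests, the final elimination will come from the holomorphic first Fourier--Jacobi coefficient
\[
\psi_{L,C}(\tau,\mathfrak{z})=\eta(\tau)^{2k}\prod_{\ell>0}\Bigl(\frac{\vartheta(\tau,(\ell,\mathfrak{z}))}{\eta(\tau)}\Bigr)^{f(0,\ell)}
\]
of Theorem \ref{th:BorcherdsJF}: if $F$ is $2$-reflective, then $\psi_{L,C}$ must be a holomorphic Jacobi form of weight $k$ and index $L(C)$, and this forces strong theta-block-type inequalities on the multiplicities $\beta_0,\beta_\mu$ which, in rank $12$, are incompatible with the equation \eqref{eq:vectorsystem}. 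The case $R_L=\emptyset$ is handled in parallel: here $C=\frac{1}{2\rank(L)}\sum_{\mu}(\beta_\mu-\beta_0)\sum_{s\in R_\mu(L)}(s,s)$ is expressed purely in terms of the norm-$1/2$ vectors, the weight becomes $k=12\beta_0-\tfrac14\sum_\mu(\beta_\mu-\beta_0)|R_\mu(L)|$, and the constraint that the $R_\mu(L)$'s form a vector system spanning $L\otimes\RR$ of rank $12$ (again via \eqref{eq:vectorsystem}), combined with the holomorphicity of $\psi_{L,C}$, rules out the remaining possibilities.

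The main obstacle I anticipate is the case analysis in Case A: organizing the finitely many rank-$12$ root systems with a common non-$A_1$ Coxeter number, and for each extracting enough arithmetic information from \eqref{eq:q^0-term}--\eqref{eq:vectorsystem} and \eqref{eq:weight} to force a contradiction; in particular, configurations containing a copy of $E_8$ or $2E_6$ are delicate since they satisfy many of the easy necessary conditions and are only eliminated by the finer theta-block constraint on $\psi_{L,C}$.
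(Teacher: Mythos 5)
Your overall strategy is genuinely different from the paper's, and it has a gap that I do not see how to close. The paper's proof is a short, uniform argument: after reducing to a \emph{maximal} lattice $M=2U\oplus L(-1)$, it applies the differential operators of Lemma \ref{Lem:differential} to $\phi_L$ to build weakly holomorphic Jacobi forms of weights $2,4,6$, takes an $M_*(\SL_2(\ZZ))$-linear combination killing both types of singular Fourier coefficients (hyperbolic norms $-2$ and $-1/2$), and obtains a holomorphic Jacobi form $\varphi_6$ of \emph{singular} weight $6=\rank(L)/2$ with nonzero constant term. Singular weight forces $\varphi_6$ to be a combination of theta functions; maximality of $L$ forces the $q^1$-coefficient of $\varphi_6(\tau,0)$ to equal $\abs{R_L}\geq 0$; but $\varphi_6(\tau,0)=E_6=1-504q+\cdots$. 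This works for \emph{every} $L$, with no case analysis.

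The gap in your proposal is the case $R_L=\emptyset$ (what the paper calls Leech type). There, every tool you invoke becomes vacuous: all $R_\mu(L)$ are empty (they consist of half-roots, so they require $2s\in R(L)$), hence $C=0$, the identity \eqref{eq:vectorsystem} reads $0=0$, the weight formula gives $k=12\beta_0$ with no contradiction against the singular weight $6$, and the theta block $\psi_{L,C}$ degenerates to $\eta^{2k}$, which is holomorphic. Nothing in Lemma \ref{Lem:q^0-term}, Theorem \ref{th:2-reflective}, or Theorem \ref{th:BorcherdsJF} rules out a rank-$12$ Leech-type lattice; indeed the paper emphasizes that classifying Leech-type $2$-reflective lattices is open in general, and the only mechanism it has for excluding them in high rank is precisely the differential-operator construction above. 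A secondary problem is that your Case A is partially circular: you appeal to the absence of rank-$12$ entries in Theorem \ref{th:main2reflective}, but that theorem's proof restricts to $\rank(L)\leq 11$ by \emph{citing} Theorem \ref{th:nonsign12}. Carrying out Case A honestly would require ad hoc eliminations (embeddings into larger root lattices, pull-backs from Niemeier lattices, singular-weight contradictions for difference forms) for each of $12A_1$, $6A_2$, $4A_3$, $3A_4$, $2A_6$, $A_{12}$, $3D_4$, $2D_6$, $D_{12}$, $2E_6$, $E_8\oplus 4A_1$, the mixed $mA_1\oplus R$ configurations, and their overlattices --- the weight formula and \eqref{eq:vectorsystem} alone do not kill, e.g., $2E_6$ or $3D_4$.
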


\begin{proof}
Suppose that $M$ is a $2$-reflective lattice of signature $(2,14)$. Without loss of generality, we can assume that $M$ is maximal, namely $M$ has no any proper even overlattice. As the proof of \cite[Proposition 3.1]{Ma17}, we can show that $M$ contains two hyperbolic planes. It means that $M$ can be written as $M=2U\oplus L(-1)$. Therefore, there exists a weakly holomorphic Jacobi form of weight $0$ and index $L$, noted by $\phi$. Like the proof of \cite[Theorem 3.6]{Wan18}, applying differential operators defined in Lemma \ref{Lem:differential} to $\phi$, we construct weakly holomorphic Jacobi forms of weights $2$ and $6$ as $\phi_2:=H_0(\phi)$ and $\phi_6:=H_4H_2H_0(\phi)$. We see from its construction that $H_k$ preserves the singular Fourier coefficients of $\phi$. There are two types of singular Fourier coefficients (with hyperbolic norms $-2$ and $-1/2$) in these $\phi_i$.  Let $E_4$ and $E_6$ denote the Eisenstein series of weights $4$ and $6$ on $\SL_2(\ZZ)$. By taking a suitable $\CC$-linear combination of $E_6\phi_0$, $E_4\phi_2$ and $\phi_6$ to cancel the two types of singular Fourier coefficients, we construct a holomorphic Jacobi form of weight $6$ and index $L$, denoted by $\varphi_6$. By direct calculation, the constant term of $\varphi_6$ is not zero and we assume it to be $1$. The function $\varphi_6$ has singular weight $6$.  Thus, it is a $\CC$-linear combination of theta-functions for $L$ defined as \eqref{eq:ThetaFunction}. Since $L$ is maximal, there is no $\gamma\in L^\vee$ such that $\gamma\not\in L$ and $(\gamma,\gamma)=2$. Thus the $q^1$-term in the Fourier expansion of $\varphi_6$ comes only from the theta-function $\Theta_0^L$. But $\varphi_6(\tau,0)=E_6(\tau)=1-504q+...$, which leads to a contradiction. We complete the proof.
\end{proof}

\subsection{More refined results}
In this subsection, we prove the following main result. 

\begin{theorem}\label{th:2-reflective}
Let $M=2U\oplus L(-1)$ and $F$ be a $2$-reflective modular form of weight $k$ with divisor of the form \eqref{eq:divisor} for $M$. Let $R(L)$ be the root sublattice generated by the $2$-roots of $L$. If $R(L)$ is empty, then $k=12\beta_0$. If $R(L)$ is non-empty, then $R(L)$ and $L$ have the same rank, which is denoted by $n$. Furthermore, the root lattice $R(L)$ satisfies one of the following conditions
\begin{enumerate}
\item[(a)] $R(L)=nA_1$. In this case, all $\beta_\mu$ satisfying $R_\mu(L)\neq \emptyset$ are the same. 

\item[(b)]  The lattice $A_1$ is not an irreducible component of $R(L)$. In this case, all the irreducible components of $R(L)$ have the same Coxeter number, which is denoted by $h$. In addition, the sets $R_\mu(L)$ are all empty and the weight $k$ is given by
\begin{equation*}
k=\beta_0\left(12+12h-\frac{1}{2}nh\right).
\end{equation*}

\item[(c)] $R(L)=mA_1\oplus R$, where $1\leq m \leq n-2$ and the lattice $A_1$ is not an irreducible component of $R$. In this case, all the irreducible components of $R$ have the same Coxeter number, which is denoted by $h$. In addition, all $\beta_\mu$ satisfying $R_\mu(L)\neq \emptyset$ are the same, which is denoted by $\beta_1$. Moreover, we have
\begin{align*}
\beta_1&=(2h-3)\beta_0,\\
k&=\beta_0\left[ \left( 12 - \frac{n+3m}{2} \right)h +12+3m  \right].
\end{align*}
Furthermore, $L$ can be represented as $mA_1\oplus L_0$, where $L_0$ is an even overlattice of $R$.
\end{enumerate}
\end{theorem}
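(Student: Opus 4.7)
My plan is to apply the quadratic identity \eqref{eq:vectorsystem} of Lemma \ref{Lem:q^0-term} to $\phi_L$. Reading the $q^0$-term from \eqref{F3}, only $f(0,r)=\beta_0$ for $r\in R_L$ and $f(0,s)=\beta_\mu-\beta_0$ for $s\in R_\mu(L)$ contribute nontrivially (the constant $f(0,0)=2k$ is killed by $(0,\mathfrak{z})^2=0$), which yields the master identity
\begin{equation*}
\beta_0\sum_{r\in R_L}(r,\mathfrak{z})^2+\sum_{\mu\in\pi_M}(\beta_\mu-\beta_0)\sum_{s\in R_\mu(L)}(s,\mathfrak{z})^2=2C(\mathfrak{z},\mathfrak{z}),\quad \mathfrak{z}\in L\otimes\CC,
\end{equation*}
where $C$ is the constant from \eqref{eq:q^0-term}. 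Since $R_\mu(L)\subseteq\tfrac12 R(L)$, the case $R(L)=\emptyset$ makes the left side vanish identically and forces $C=0$; equation \eqref{eq:q^0-term} then reduces to $0=k/12-\beta_0$, proving $k=12\beta_0$.

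Assume now $R(L)\neq\emptyset$. To show that $R(L)$ spans $L\otimes\RR$, I regroup the left side by source root: setting $R_L^\ast=\{r\in R_L:r/2\in L^\vee\}$, it becomes
\begin{equation*}
\beta_0\sum_{r\in R_L\setminus R_L^\ast}(r,\mathfrak{z})^2+\tfrac14\sum_{r\in R_L^\ast}(3\beta_0+\beta_{\overline{r/2}})(r,\mathfrak{z})^2,
\end{equation*}
a sum of squares with nonnegative coefficients (since $\beta_0,\beta_\mu\ge 0$). If $R(L)$ were not of full rank, a nonzero $\mathfrak{z}_0\in R(L)^\perp\cap L\otimes\RR$ would kill every summand, forcing $C=0$; the vanishing of a sum of nonnegative squares would then force every coefficient to vanish, making $\beta_0$ and every relevant $\beta_\mu$ equal to zero and contradicting $F$ being nonconstant.

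The next input is lattice-theoretic: if $R_i$ is an irreducible component of $R(L)$ not of type $A_1$, it contains roots $r,r'$ with $(r,r')=\pm 1$, and since $r'\in L$ this forces $(r/2,r')\notin\ZZ$, so $r/2\notin L^\vee$. Hence only $A_1$-summands of $R(L)$ can contribute to any $R_\mu(L)$. Part (b), where no $A_1$ occurs, follows immediately: the identity becomes $\beta_0\sum(r,\mathfrak{z})^2=2C(\mathfrak{z},\mathfrak{z})$, and specializing $\mathfrak{z}$ to each $R_i\otimes\RR$ and applying Proposition \ref{prop:root} gives $h_i\beta_0=C$, so a common Coxeter number $h$ exists; the weight formula $k=\beta_0(12+12h-nh/2)$ then drops out of \eqref{eq:weight} using $|R_L|=nh$.

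The main obstacle lies in part (c), where $A_1$ and non-$A_1$ components coexist and the orthogonal splitting $L=mA_1\oplus L_0$ must also be established. Testing $\mathfrak{z}$ on each non-$A_1$ component still yields the same $h=C/\beta_0$. Testing $\mathfrak{z}=a_j$ on an $A_1$-summand $\latt{a_j}$ gives $8\beta_0+2(\beta_{\mu_j}-\beta_0)=4h\beta_0$ when $a_j/2\in L^\vee$, producing the uniform value $\beta_1=(2h-3)\beta_0$; if instead $a_j/2\notin L^\vee$ the identity would read $8\beta_0=4h\beta_0$, i.e.\ $h=2$, impossible because every irreducible non-$A_1$ root lattice has $h\ge 3$. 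Hence $\div(a_j)=2$ for each $j$, and consequently any glue vector of $L$ over $R(L)$ with a nontrivial $A_1$-component would pair with the corresponding $a_j$ to yield $\pm 1$, contradicting $\div(a_j)=2$; this forces the decomposition $L=mA_1\oplus L_0$. Part (a) is simply the $m=n$ specialization of the same computation, where the single value $\beta_{\mu_j}=2C-3\beta_0$ is shown to be independent of $j$. Substituting $|R_L|=2m+h(n-m)$ and $\sum_\mu(\beta_\mu-\beta_0)|R_\mu(L)|=4m(h-2)\beta_0$ into \eqref{eq:weight} then produces the stated weight.
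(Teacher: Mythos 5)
Your proof follows essentially the same route as the paper's: apply \eqref{eq:vectorsystem} and \eqref{eq:q^0-term} to the $q^0$-term \eqref{F3}, compare component by component with Proposition \ref{prop:root}, read off the relations $C=h_j\beta_0=\tfrac12\beta_\mu+\tfrac32\beta_0$, and feed everything into \eqref{eq:weight}; even the splitting $L=mA_1\oplus L_0$ is obtained from $\div(a_j)=2$ exactly as in the paper, and your sum-of-squares argument merely makes explicit the paper's assertion that $C\neq 0$. One small point: when all coefficients in that sum of squares vanish, the contradiction is not immediately with $F$ being nonconstant (a priori some $\beta_\mu$ with $R_\mu(L)=\emptyset$ could be positive), but rather that \eqref{eq:q^0-term} then gives $\sum_\ell f(0,\ell)=2k$ and hence $k=12\beta_0=0$, forcing $F$ to be constant.
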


\begin{proof}
First, if $R(L)=\emptyset$ then we derive from \eqref{eq:weight} that $k=12\beta_0$.

We next assume that $R(L)\neq \emptyset$. Let $R(L)=\oplus R_i$ be the decomposition of irreducible components i.e. $R_i$ are irreducible root lattices.  We write $\mathfrak{z}=\sum_i \mathfrak{z}_i \in L\otimes \CC$, where $\mathfrak{z}_i\in R_i\otimes\CC$. For irreducible root lattices,  only the lattice $A_1$ satisfies the property that there is a root $v$ such that $v/2$ is in the dual lattice.
By \eqref{eq:vectorsystem} and \eqref{F3}, we conclude that $R(L)$ and $L$ have the same rank. Otherwise, there exists a vector in $L\otimes\CC$ orthogonal to $R(L)\otimes\CC$, which contradicts \eqref{eq:vectorsystem} because the number $C$ is not equal to $0$. In a similar way, we can prove statement (a).

We next prove statement (b). Since there is no $R_i$ equal to $A_1$,  the sets $R_\mu(L)$ are all empty. By Lemma \ref{Lem:q^0-term} and \eqref{F3}, we have 
\begin{align*}
\sum_i \sum_{\substack{r\in R_i\\ r^2=2}} \beta_0 (r, \mathfrak{z}_i)^2=\frac{2}{n}\sum_i \beta_0 h_i \rank(R_i)\sum_i (\mathfrak{z}_i, \mathfrak{z}_i),
\end{align*}
where $h_i$ are the Coxeter numbers of $R_i$. On the other hand, by Proposition \ref{prop:root}, we have
$$
\sum_{\substack{r\in R_i\\ r^2=2}}(r, \mathfrak{z}_i)^2= 2h_i (\mathfrak{z}_i, \mathfrak{z}_i).
$$ 
Thus, all Coxeter numbers $h_i$ are the same. The weight formula follows from \eqref{eq:weight}. 

We now prove statement (c). Firstly, all non-empty $R_\mu(L)$ are contained in the components $mA_1$. We write $R=\oplus R_j$, where $R_j$ are irreducible root lattices. For $1\leq t \leq m$, if the dual lattice of the $t$-th copy of $A_1$ is contained in $L^\vee$, then the corresponding $R_\mu(L)$ is non-empty and have two elements. In this case, we denote the associated $\beta_\mu$  by $\beta_t$. If not, then the corresponding $R_\mu(L)$ is empty, so we have $\beta_t:=\beta_\mu=\beta_0$. We also denote the elliptic parameter associated to the $t$-th copy of $A_1$ by $\mathfrak{z}_t$. By Lemma \ref{Lem:q^0-term} and \eqref{F3}, we have 
\begin{align*}
\sum_j \sum_{\substack{r\in R_j\\ r^2=2}} \beta_0 (r, \mathfrak{z}_j)^2 +\sum_t 2( (\beta_t-\beta_0) + 4\beta_0) \mathfrak{z}_t^2 =2C\left[ \sum_j (\mathfrak{z}_j, \mathfrak{z}_j)+ \sum_t 2\mathfrak{z}_t^2\right].
\end{align*}
In the above identity, we use the standard model of $A_1$: $A_1=\ZZ\alpha$ with $\alpha^2=2$. Let $h_j$ denote the Coxeter number of $R_j$. Then we have
$$
C=\beta_0 h_j=\frac{1}{2}\beta_t+\frac{3}{2}\beta_0, \quad \forall\ h_j, \ \forall \ 1\leq t \leq m.
$$
Therefore, all $h_j$ are the same (noted by $h$), and all $\beta_t$ are also the same (noted by $\beta_1$), which implies that the dual lattice of each copy of $A_1$ is contained in $L^\vee$. It follows that $\beta_1=(2h-3)\beta_0$. Combining the formula $\beta_1=(2h-3)\beta_0$ and \eqref{eq:weight} together, we deduce the weight formula. We set $L_0=\{ v\in L: (v,x)=0, \forall x\in mA_1 \}$. Then we have
$$
mA_1\oplus L_0 < L < L^\vee < mA_1^\vee\oplus L_0^\vee.
$$
For any $l\in L$, we can write $l=l_1+l_2$ with $l_1\in mA_1^\vee$ and $l_2\in L_0^\vee$. Since $mA_1^\vee < L^\vee$, we have $(l, mA_1^\vee) \in \ZZ$. Thus $(l_1, mA_1^\vee)\in \ZZ$, which yields $l_1\in mA_1$. Therefore, $l_2=l-l_1\in L$ and then $l_2\in L_0$ due to $(l_2, mA_1)=0$. We thus prove $L=mA_1\oplus L_0$.
\end{proof}

Inspired by the classification of even positive-definite unimodular lattices, we define the following classes of $2$-reflective lattices.
\begin{definition}
A $2$-reflective lattice $M=2U\oplus L(-1)$ is called Leech type if $R(L)=\emptyset$. The lattice $M$ is called Niemeier type if it satisfies the condition in statement (b) and called quasi-Niemeier type if it satisfies the condition in statement (c). 
\end{definition}

In a similar way, we can prove the following necessary condition for a lattice to be reflective. This condition would be useful to classify reflective lattices.
\begin{proposition}\label{prop:reflectivetype}
If the lattice $2U\oplus L(-1)$ is reflective, then either $\mathfrak{R}_L$ is empty, or $\mathfrak{R}_L$ generates the space $L\otimes \RR$, where $\mathfrak{R}_L$ is the root system of $L$
$$
\mathfrak{R}_L=\{r \in L:  \text{$r$ is primitive,} \; \sigma_r \in \Orth(L) \}.
$$
\end{proposition}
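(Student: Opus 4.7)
My plan is to mirror the argument in Theorem \ref{th:2-reflective}, replacing the role of the $2$-root system $R(L)$ with the full reflective root system $\mathfrak{R}_L$. Since $M = 2U \oplus L(-1)$ is reflective, by \cite{Bru14} a reflective modular form $F$ of some weight $k > 0$ arises as the Borcherds lift of a weakly holomorphic Jacobi form $\phi \in J^{!}_{0, L}$ with $f(0, 0) = 2k$, and the singular Fourier coefficients of $\phi$ encode the rational quadratic divisors of $F$ through Theorem \ref{th:BorcherdsJF}.

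The first key step is to identify the support of the $q^0$-term. For $\ell \in L^\vee \setminus \{0\}$ the coefficient $f(0, \ell)$ has hyperbolic norm $-(\ell, \ell) < 0$ and is therefore singular; if non-zero, it contributes to a reflective divisor $\cD_v(M)$ with $v \equiv \ell \bmod M$ and $(v, v) = -(\ell, \ell)$. Taking the canonical vertical representative $v = \ell \in L(-1)^\vee \subset M^\vee$, the reflection $\sigma_v$ acts trivially on the $2U$-summand and as $\sigma_\ell$ on $L(-1)$, so the requirement $\sigma_v \in \Orth^+(M)$ reduces to $\sigma_\ell \in \Orth(L)$. This forces the primitive vector of $L$ proportional to $\ell$ to belong to $\mathfrak{R}_L$, giving
$$\{\ell \in L^\vee \setminus \{0\} : f(0, \ell) \neq 0\} \subset \QQ \cdot \mathfrak{R}_L.$$

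Now suppose, for contradiction, that $\mathfrak{R}_L$ is non-empty but does not span $L \otimes \RR$, and pick a non-zero $\mathfrak{z}_0 \in L \otimes \RR$ orthogonal to $\mathfrak{R}_L$. Every summand on the left of \eqref{eq:vectorsystem} vanishes at $\mathfrak{z}_0$, so $2C(\mathfrak{z}_0, \mathfrak{z}_0) = 0$; positivity of $L$ forces $C = 0$. Evaluating \eqref{eq:vectorsystem} at any root $r_0 \in \mathfrak{R}_L$ then yields $\sum_\ell f(0, \ell)(\ell, r_0)^2 = 4C = 0$, which I aim to contradict by showing the left-hand side must be strictly positive.

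The main obstacle is precisely this final step. In Theorem \ref{th:2-reflective} it is immediate because the multiplicities $\beta_0$ and $\beta_\mu$ in the explicit expansion \eqref{F3} are strictly positive, producing $C = \beta_0 h > 0$ via Proposition \ref{prop:root}. For a general reflective form one must exclude the possibility that $\div(F)$ avoids every vertical root divisor $\cD_r(M)$ with $r \in \mathfrak{R}_L$. I would close this by combining the non-negativity of divisor multiplicities $\sum_{d \geq 1} f(0, d\ell)$ coming from holomorphicity of $\Borch(\phi)$ with the structure of the first Fourier-Jacobi coefficient \eqref{FJtheta}: when $C = 0$ and $f(0, \ell) = 0$ for every $\ell \neq 0$, the theta-block $\psi_{L, C}$ collapses to $\eta(\tau)^{2k}$, and combined with the identity \eqref{eq:q^0-term} and the restriction of $\phi$ to the rank-one sublattice $\ZZ r_0$, this is incompatible with the claimed existence of the reflective form $F$ on $M$.
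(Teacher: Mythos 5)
Your overall route is the one the paper intends: Proposition \ref{prop:reflectivetype} is given only the one-line justification that it follows ``in a similar way'' to Theorem \ref{th:2-reflective}, and your steps --- pass to $\phi\in J^{!}_{0,L}$ via \cite{Bru14}, observe that a nonzero $f(0,\ell)$ with $\ell\neq 0$ can only sit over a ray of $\mathfrak{R}_L$ because the corresponding divisor is the vertical one $\cD_\ell(M)$ and $\sigma_\ell=\sigma_r$ for the primitive $r\in L$ on that ray, then feed this into \eqref{eq:vectorsystem} --- are exactly the paper's. You have also correctly located the one place where the analogy is not automatic, namely ruling out $C=0$.

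However, your proposed way of closing that case does not work, and this is a genuine gap. You claim that the configuration ``$C=0$ and $f(0,\ell)=0$ for all $\ell\neq 0$, so $\psi_{L,C}=\eta^{2k}$'' is incompatible with the existence of $F$. It is not: $\Phi_{12}$ on $\II_{2,26}=2U\oplus\Lambda_{24}(-1)$ realizes exactly this configuration ($\phi=q^{-1}+24+O(q)$, $C=0$, $\psi_{L,C}=\eta^{24}=\Delta$), so any contradiction must use the non-spanning root $r_0\in\mathfrak{R}_L$ in an essential way; neither \eqref{eq:q^0-term} nor the restriction of $\phi$ to $\ZZ r_0$ does so, since that restriction is a perfectly good weakly holomorphic Jacobi form and only reproduces $C=0$ in rank one. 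Contrast this with Theorem \ref{th:2-reflective}, where the step is genuinely easier: there the only negative-index coefficient is $f(-1,0)=\beta_0$, and $\beta_0>0$ forces $f(0,r)=\beta_0>0$ for every $2$-root $r\in L$ because the single Heegner divisor $\cH_0$ automatically contains all vertical $(-2)$-divisors, while $\beta_0=0$ collapses \eqref{eq:q^0-term} to $k/12=C=0$. For a general reflective form nothing forces $\div(F)$ to contain any vertical divisor $\cD_r$ with $r\in\mathfrak{R}_L$, so the case ``$q^0$-support empty while $\mathfrak{R}_L\neq\emptyset$ is non-spanning'' is not excluded by anything you have written, and a complete proof must supply an argument for precisely this case. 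A secondary, fixable point: \eqref{eq:vectorsystem} involves the individual $f(0,\ell)$, whereas holomorphy of $\Borch(\phi)$ only controls the multiplicities $\sum_{d\geq 1}f(0,d\ell)$; to deduce that the $q^0$-support lies in $\QQ\cdot\mathfrak{R}_L$ and that $C=0$ kills it, you should pass between the two, e.g.\ by Abel summation along each primitive ray of $L^\vee$.
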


Remark that the above result is an analogue of a result in \cite{Vin72} which stated that if $U\oplus L(-1)$ is hyperbolic reflective then $\mathfrak{R}_L$ always generates $L\otimes \RR$.

In \cite[Theorem 3.6]{Wan18}, we have shown that if $2U\oplus L(-1)$ is a $2$-reflective lattice of signature $(2,19)$ then the weight of the corresponding $2$-reflective modular form is $75\beta_0$. We next use the above theorem to prove the following refined classification, which answers a question formulated in \cite[Questions 4.13 (3)]{Wan18}.

\begin{theorem}\label{th:sign19}
If $M$ is a $2$-reflective lattice of signature $(2,19)$, then it is isomorphic to the lattice $2U\oplus 2E_8(-1)\oplus A_1(-1)$.
\end{theorem}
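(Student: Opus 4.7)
The plan is to combine the weight constraint from \cite[Theorem~3.6]{Wan18} with the trichotomy of Theorem~\ref{th:2-reflective}, and then rule out the residual Niemeier-type possibilities using quasi pull-backs of $\Phi_{12}$.

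First I would reduce to the split form. By Lemma~\ref{Lem:reductionMa} we may assume $M$ is maximal among $2$-reflective lattices of signature $(2,19)$, and then the argument from the proof of Theorem~\ref{th:nonsign12} (based on \cite[Proposition~3.1]{Ma17}) gives $M = 2U\oplus L(-1)$ with $L$ a positive-definite even lattice of rank $17$. By \cite[Theorem~3.6]{Wan18}, the weight of any $2$-reflective form on $M$ equals $75\beta_0$.

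Next I would feed these data into Theorem~\ref{th:2-reflective}. The Leech case is excluded immediately since the weight there is $12\beta_0$. The Niemeier equation $12 + 12h - 17h/2 = 75$ forces $h=18$, and since the only irreducible root lattices with Coxeter number $18$ are $A_{17}$, $D_{10}$, and $E_7$, the rank constraint combined with the absence of an $A_1$ component leaves only $R(L) = A_{17}$ or $R(L) = D_{10}\oplus E_7$. The quasi-Niemeier equation $(7-3m)h = 6(21-m)$ has a unique admissible solution $(m,h)=(1,30)$, hence $R(L) = A_1\oplus 2E_8$; Theorem~\ref{th:2-reflective}(c) then gives $L = A_1\oplus L_0$ with $L_0$ an even overlattice of $2E_8$, and the unimodularity of $2E_8$ forces $L_0 = 2E_8$, yielding $M \cong 2U\oplus 2E_8(-1)\oplus A_1(-1)$.

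The remaining task is to eliminate the two Niemeier root systems. Under the maximality assumption the only candidate lattices are $L = A_{17} + \ZZ[6]$ (an index-$3$ overlattice of $A_{17}$) and the analogous index-$2$ overlattice of $D_{10}\oplus E_7$. For each, I would embed $T := 2U\oplus L(-1)$ primitively into $\II_{2,26}$ through a Niemeier lattice containing $R(L)$ as a direct summand (namely $N(E_7\oplus A_{17})$ and $N(2E_7\oplus D_{10})$ respectively), so that the orthogonal complement is isomorphic to $E_7(-1)$ with $N(T^{\perp}) = 63$ and the quasi pull-back $\Phi_{12}\lvert_T$ from Theorem~\ref{th: Borchpullback} has weight $75$. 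Since neither $A_{17}$ nor $D_{10}\oplus E_7$ satisfies the $\norm_2$ condition, $\Phi_{12}\lvert_T$ necessarily carries extra divisors on $\cD_v(T)$ for vectors $v\in T^{\vee}\setminus T$ lying in discriminant classes of order greater than $2$ (such as the class $[3]$ of $D(A_{17})$), which cannot match any $2$-reflective divisor of $F$. Comparing $F$ with $\Phi_{12}\lvert_T^{\beta_0}$ and translating back to Jacobi forms via Theorem~\ref{th:BorcherdsJF}, one produces a weakly holomorphic Jacobi form of weight $0$ and index $L$ forced to have nonzero singular Fourier coefficients on those extra classes, and the theta block expansion~\eqref{FJtheta} combined with Lemma~\ref{Lem:q^0-term} rules out its existence.

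The main obstacle is this last step: for each of the two candidate overlattices, carrying out the explicit bookkeeping of the extra divisor contributions from the quasi pull-back and then invoking the structure of $J^{!}_{0,L}$ to derive the contradiction. This realizes the ``difference between the pull-back and the assumed $2$-reflective modular form'' strategy described in the introduction.
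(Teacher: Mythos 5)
Your reduction via the weight formula of \cite[Theorem 3.6]{Wan18} and the trichotomy of Theorem \ref{th:2-reflective} matches the paper's strategy, and the arithmetic $h=18$ (Niemeier) and $(m,h)=(1,30)$ (quasi-Niemeier) is correct. However, the endgame contains two genuine errors.

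First, the maximality reduction is not legitimate here. Lemma \ref{Lem:reductionMa} lets you pass to an even \emph{overlattice} $M'$ of $M$, so proving $M'\cong 2U\oplus 2E_8(-1)\oplus A_1(-1)$ does not prove the same for $M$: the theorem is an isomorphism classification, not a nonexistence statement, so you must still rule out 2-reflectivity of the proper finite-index sublattices, i.e.\ precisely the non-maximal cases $L=A_{17}$ and $L=D_{10}\oplus E_7$. These are the hard cases of the paper's proof: $D_{10}\oplus E_7$ is killed by the genus identification $D_{10}\sim E_8\oplus 2A_1$ (reducing to a quasi-Niemeier contradiction), while $A_{17}$ requires comparing $\phi_{0,A_{17}}$ with the pull-back of $\varphi_{0,N(A_{17}\oplus E_7)}$ and invoking Wirthm\"uller's free-module structure of $\Orth(A_{17})$-invariant weak Jacobi forms to show that a weak Jacobi form of weight $-12$ and index $1$ cannot have only five orbits in its $q^0$-term. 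Your proposal discards exactly these cases and never addresses them. (The paper also handles the case where $M$ contains no copy of $2U$ by a separate overlattice argument, which you omit.)

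Second, the lattices you propose to eliminate are the wrong targets: $A_{17}+\ZZ[6]$ and the index-$2$ even overlattice of $D_{10}\oplus E_7$ both have determinant $2$, so by Theorem \ref{th: Nik2} the lattices $2U\oplus(A_{17}+\ZZ[6])(-1)$ and $2U\oplus(D_{10}\oplus E_7)'(-1)$ are \emph{isomorphic} to $2U\oplus 2E_8(-1)\oplus A_1(-1)$ and are therefore genuinely 2-reflective. No contradiction can be produced for them; the "extra non-2-reflective divisors" you predict from the quasi pull-back do not in fact occur for these overlattices (the paper makes the opposite use of them: "if $L_1$ is a nontrivial even overlattice of $A_{17}$ or $D_{10}\oplus E_7$ then $2U\oplus L_1$ is of determinant $2$ and isomorphic to $2U\oplus 2E_8\oplus A_1$"). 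A smaller gap: in the quasi-Niemeier branch you conclude $R(L)=2E_8\oplus A_1$ from $h=30$, but $D_{16}$ also has Coxeter number $30$, so $R(L)=D_{16}\oplus A_1$ must be considered and excluded (the paper does this via $D_{16}\sim E_8\oplus D_8$ in the same genus and Theorem \ref{th:2-reflective}(c)).
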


\begin{proof}
We first prove the assertion under the assumption that $M$ contains $2U$, i.e. $M=2U\oplus L(-1)$. It is clear that $R(L)$ is non-empty because the weight is $75\beta_0$. In addition, we have $R(L)\neq 17A_1$, otherwise the weight of the associated $2$-reflective modular form is
$$
k\leq \beta_0\left(12+ 2\cdot 17 \left( \frac{12}{17}-\frac{1}{2} \right) \right)-2\cdot 17 \beta_0  \left( \frac{3}{17}-\frac{1}{2} \right)=30\beta_0.
$$

If $M$ is of quasi-Niemeier type, then we have
$$
k=\beta_0\left[ \left( 12 - \frac{17+3m}{2} \right)h +12+3m  \right]=75\beta_0,
$$
from which it follows that $h(7-3m)/2+3m=63$. Since $1\leq m \leq 15$, the only solution is $m=1$ and $h=30$. By Table \ref{tab:Coxeter}, $R(L)=2E_8\oplus A_1$ or $D_{16}\oplus A_1$. But the lattices $D_{16}$ and $E_8\oplus D_8$ are in the same genus. Thus, $2U\oplus D_{16}\oplus A_1 \cong 2U\oplus E_8\oplus D_8\oplus A_1$. If $L=D_{16}\oplus A_1$, then the lattice $2U\oplus E_8\oplus D_8\oplus A_1$ is $2$-reflective, which contradicts Theorem \ref{th:2-reflective} (c) because $E_8$ and $D_8$ have distinct Coxeter numbers. The only non-trivial even overlattice of $D_{16}$ is the unimodular lattice $D_{16}^+$. Since $2U\oplus D_{16}^+ \oplus A_1 \cong 2U\oplus 2E_8\oplus A_1$, we prove the theorem in this case.

If $M$ is of Niemeier type, then we have 
$$
k=\beta_0\left(12+12h-17h/2 \right)=75\beta_0,
$$
which implies $h=18$. By Table \ref{tab:Coxeter}, we have $R(L)=A_{17}$ or $R(L)=D_{10}\oplus E_7$. If $L=D_{10}\oplus E_7$, then $2U\oplus E_8\oplus E_7\oplus 2A_1$ is $2$-reflective, as $D_{10}$ and $E_8\oplus 2A_1$ are in the same genus. This leads to a contradiction by Theorem \ref{th:2-reflective} (c).  If $L=A_{17}$, the $2$-reflective vector $v$ with $\div(v)=2$ is represented as $(0,2,[9], 1, 0)$ which appears in the $q^2$-term of the corresponding Jacobi form $\phi_{0,A_{17}}$ of weight $0$. In addition, the $q^1$-term of $\phi_{0,A_{17}}$ has no singular Fourier coefficients of hyperbolic norm $-1/2$. On the other hand, the Niemeier lattice $N(A_{17}\oplus E_7)$ is generated over $A_{17}\oplus E_7$ by the isotropic subgroup
$$
G=\{ [0]\oplus [0], [3]\oplus [1], [6]\oplus [0], [9]\oplus [1],  [12]\oplus [0], [15]\oplus [1]  \}.
$$
Thus, the pull-back of $\varphi_{0,N(A_{17}\oplus E_7)}$ on $A_{17}$ will give a weakly holomorphic Jacobi form $\psi_{0,A_{17}}$ of weight $0$ which has the same $q^{-1}$ and $q^0$-terms as $\phi_{0,A_{17}}$ and its singular Fourier coefficients in the $q^1$-term are represented by $[3]$ and $[6]$. The reason why these two Jacobi forms have the same $q^{-1}$ and $q^0$-terms is that they have the same type of $2$-reflective divisors and the corresponding coefficients are determined by the formulas in Lemma \ref{Lem:q^0-term}. We will often use this argument later. Thus, $\phi:=(\phi_{0,A_{17}}-\psi_{0,A_{17}})/\Delta$ is a weak Jacobi form of weight $-12$ and index $1$ for $A_{17}$. We can assume that it is invariant under the orthogonal group $\Orth(A_{17})$ by considering its symmetrization. The $q^0$-term of $\phi$  contains five $\Orth(A_{17})$-orbits: $[0]$, $[1]$,  $[2]$, $[3]$, $[6]$. By \cite[Theorem 3.6]{Wir92} or \cite[Theorem 3.1]{Wan21}, the space of weak Jacobi forms of index $1$ for $A_{17}$ invariant under $\Orth(A_{17})$ is a free module generated by ten Jacobi forms of weights $0$, $-2$, $-4$, ..., $-18$ over the ring of $\SL_2(\ZZ)$ modular forms (Jacobi forms of odd weight and index $A_{17}$ are anti-invariant under $\Orth(A_{17})$.). The ten generators were constructed in \cite{Ber00}. Note that there are ten independent $\Orth(A_{17})$-orbits appearing in $q^0$-terms of these generators, namely $[i]$ for $0\leq i \leq 9$. Therefore, the $q^0$-terms of the ten generators are linearly independent over $\CC$. There are only three independent weak Jacobi forms of weight $-12$ and index $A_{17}$. We work out their $q^0$-terms and find that the $q^0$-term of any weak Jacobi form of weight $-12$ contains at least eight orbits. But the $q^0$-term of $\phi$ has only five orbits, which leads to a contradiction. 

We complete the proof of the particular case by the fact that if $L_1$ is a nontrivial even overlattice of $R(L)=A_{17}$ or $D_{10}\oplus E_7$ then $2U\oplus L_1$ is of determinant $2$ and isomorphic to $2U\oplus 2E_8\oplus A_1$. 

We now consider the remaining case that $M$ does not contain $2U$. By \cite[Lemma 1.7]{Ma18}, there exists an even overlattice $M'$ of $M$ containing $2U$. By Lemma \ref{Lem:reductionMa}, $M'$ is also 2-reflective and then it is isomorphic to $2U\oplus 2E_8(-1)\oplus A_1(-1)$. We claim that the order of the group $M'/M$ is not a prime, otherwise $|D(M)|$ would be $2p^2$ and $M$ would contain $2U$ by Lemma \ref{lem:2U}. Thus, there exists an even lattice $M_1$ such that $M\subset M_1 \subset M'$ and $M'/M_1$ is a nontrivial cyclic group. Then $M_1$ contains $2U$ by Lemma \ref{lem:2U}. It follows that $M_1$ is 2-reflective but not isomorphic to $2U\oplus 2E_8(-1)\oplus A_1(-1)$, which contradicts the previous case. This completes the proof. 
\end{proof}

\subsection{Classification of 2-reflective lattices of Niemeier type}
In this subsection we classify $2$-reflective lattices of Niemeier type. We first consider the case of $L=R(L)$ and then consider their overlattices. We discuss case by case. Let $M=2U\oplus L(-1)$.
By \cite[Theorem 3.6]{Wan18}, Theorems \ref{th:nonsign12} and \ref{th:sign19}, if $M$ is $2$-reflective, then either $M$ is isomorphic to one of the three lattices: $\II_{2,18}$, $\II_{2,26}$ and $2U\oplus 2E_8(-1)\oplus A_1(-1)$, or we have $\rank(L)\leq 11$. Therefore, we only need to consider the case of $\rank(L)\leq 11$.

\smallskip
\textbf{(1) h=3:} The unique irreducible root lattice of Coxeter number $3$ is $A_2$. By \S \ref{sec:quasipullback},  $M$ is $2$-reflective if $L=A_2$, $2A_2$, $3A_2$. The lattice $M$ is not $2$-reflective for $L=mA_2$ with $m\geq 4$. Otherwise, since $4A_2<E_6\oplus A_2$, $2U\oplus E_6\oplus A_2$ would also be $2$-reflective, which is impossible because $E_6$ and $A_2$ have different Coxeter numbers. Then we prove this claim by Lemma \ref{Lem:pullbacktest}.

\smallskip
\textbf{(2) h=4:} The unique irreducible root lattice of Coxeter number $4$ is $A_3$. By \S \ref{sec:quasipullback}, $M$ is $2$-reflective if $L=A_3$, $2A_3$. The lattice $M$ is not $2$-reflective for $L=3A_3$ because we observe from their extended Coxeter-Dynkin diagrams that $3A_3< D_6\oplus A_3$.

\smallskip
\textbf{(3) h=5:} The unique irreducible root lattice of Coxeter number $5$ is $A_4$. By \S \ref{sec:quasipullback}, $M$ is $2$-reflective if $L=A_4$. We claim that $M$ is not $2$-reflective for $L=2A_4$. Otherwise, the lattice $2U(5)\oplus 2A_4^\vee(5)\cong U\oplus U(5) \oplus E_8(5)$ would have a reflective modular form with $10$-reflective divisors because $(2U\oplus 2A_4)^\vee(5)= 2U(5)\oplus 2A_4^\vee(5)$ and the $2$-reflective modular form for $2U\oplus 2A_4$ can be viewed as a $10$-reflective modular form for $2U(5)\oplus 2A_4^\vee(5)$. By \cite{Bru14}, this $10$-reflective modular form is a Borcherds product. This contradicts \cite[Proposition 6.1]{Sch17} because $10> 2 + 24/(5+1)$.

\smallskip
\textbf{(4) h=6:} The irreducible root lattices of Coxeter number $6$ are $A_5$ and $D_4$. By \S \ref{sec:quasipullback}, $M$ is $2$-reflective if $L=A_5$, $D_4$, $2D_4$. We claim that $M$ is not $2$-reflective for $L=2A_5$ and $L=A_5\oplus D_4$.  

Firstly, there is no $2$-reflective vector of norm $1/2$ ($\m 2$) (this type of vectors should have order $2$) in the discriminant group of $2A_5$. If there exists a $2$-reflective modular form for $2U\oplus 2A_5$, then it is a modular form with complete $2$-divisor.  But we have proved in \cite[Theorem 3.4]{Wan18} that if $M$ has a modular form with complete $2$-divisor then $\rank(L)\leq 8$. This gives a contradiction. 

Secondly, we have seen in \S \ref{sec:quasipullback} that the quasi pull-back of $A_5\oplus D_4 < N(4A_5\oplus D_4)$ gives a reflective modular form. As the proof of Theorem \ref{th:sign19}, we can check that this modular form has additional reflective divisor $\cH([2]\oplus [2], -1/6)$ given by the pull-backs of vectors of norm $4$ and type $[5]\oplus [2]\oplus [1]\oplus [0]\oplus [2]$. We denote the corresponding Jacobi form of weight $0$ by $\phi_1$. Suppose that $2U\oplus A_5\oplus D_4$ is $2$-reflective and we denote the corresponding Jacobi form of weight $0$ by $\phi_2$. Then $\phi_1$ and $\phi_2$ have the same $q^{-1}$ and $q^0$-terms. Moreover, $\phi:=\phi_1-\phi_2$ is a weakly holomorphic Jacobi form of weight $0$ whose singular Fourier coefficients  in the $q^1$-term are represented by $[3]\oplus [2]$ and  $[2]\oplus [2]$. Thus the minimum hyperbolic norm of singular Fourier coefficients of $\phi$ is $-1/2$. Therefore, $\eta^{6}\phi $ is a holomorphic Jacobi form of weight $3$ with a character. In view of the singular weight, this leads to a contradiction.

\smallskip
\textbf{(5) h=7:} The unique irreducible root lattice of Coxeter number $7$ is $A_6$. The lattice $M$ is $2$-reflective if $L=A_6$. 

\smallskip
\textbf{(6) h=8:} The irreducible root lattices of Coxeter number $8$ are $A_7$ and $D_5$. By \S \ref{sec:quasipullback}, $M$ is $2$-reflective if $L=A_7$, $D_5$. We claim that $M$ is not $2$-reflective for $L=2D_5$. This claim can be proved as the case of $2A_5$.

\smallskip
\textbf{(7) h=9:} The unique irreducible root lattice of Coxeter number $9$ is $A_8$. The lattice $M$ is not $2$-reflective if $L=A_8$.  We can prove this claim in a similar way as the case of $A_5\oplus D_4$.

\smallskip
\textbf{(8) h=10:} The irreducible root lattices of Coxeter number $10$ are $A_9$ and $D_6$. By \S \ref{sec:quasipullback}, the lattice $M$ is $2$-reflective if $L=D_6$. We can prove that $M$ is not $2$-reflective for $L=A_9$ in a similar way as the cases of $A_8$ and $A_5\oplus D_4$.

\smallskip
\textbf{(9) h=11:} The unique irreducible root lattice of Coxeter number $11$ is $A_{10}$. The lattice $M$ is not $2$-reflective if $L=A_{10}$. Since $A_{10}$ is of prime level $11$, if $2U\oplus A_{10}(-1)$ is 2-reflective then the corresponding 2-reflective modular form is a modular form with complete 2-divisor. This leads to a contradiction.

\smallskip
\textbf{(10) h=12:} The irreducible root lattices of Coxeter number $12$ are $A_{11}$, $E_6$ and $D_7$. 
The lattice $M$ is $2$-reflective if $L=D_7$, $E_6$. The lattice $M$ is not $2$-reflective for $L=A_{11}$, which can be proved as the case of $2A_5$.

\smallskip
\textbf{(11) h is larger than 12:} In view of $\rank(L)\leq 11$, the rest cases are $L=D_{m}$ with $8\leq m \leq 11$, $E_7$, $E_8$. The lattice $M$ is $2$-reflective if $L=D_8$, $D_{10}$, $E_7$, $E_8$. The lattice $M$ is not $2$-reflective for $L=D_{9}$, $D_{11}$, which can be proved as the case of $2A_5$.

\smallskip
\textbf{(12) The case of overlattices:} Let $L_1$ be a nontrivial even overlattice of $R(L)$ whose root sublattice generated by 2-roots is $R(L)$. In this case, the minimum norm of vectors in any nontrivial class of $L_1/R(L)$ is an even integer larger than $2$. It is easy to show that there is no such $R(L)$.

\smallskip
By the discussions above, we have thus proved the following theorem.

\begin{theorem}\label{th:2reflectNiemeier}
Let $M=2U\oplus L(-1)$ be a $2$-reflective lattice of Niemeier type. Then $L$ can only take one of the following $21$ lattices up to genus
\begin{align*}
&3E_8& &2E_8& &E_8& &E_7& &E_6& &A_2& &2A_2& &3A_2& &A_3& &2A_3& &A_4& \\ 
&A_5& &A_6& &A_7& &D_4& &2D_4& &D_5& &D_6& &D_7& &D_8& &D_{10}.&
\end{align*}
\end{theorem}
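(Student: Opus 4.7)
The plan is to combine the strong structural constraints from Theorem~\ref{th:2-reflective}(b) with explicit construction and obstruction techniques, working Coxeter number by Coxeter number. First I would record the bounds on the rank: by Theorem~\ref{th:nonsign12}, \cite[Theorem 3.6]{Wan18} and Theorem~\ref{th:sign19}, any $2$-reflective lattice of Niemeier type of rank in $\{12,13,15,16,17,19,20,\dots,25\}$ must coincide with one of $\II_{2,18}$, $2U\oplus 2E_8(-1)\oplus A_1(-1)$, $\II_{2,26}$, giving precisely $L=2E_8$ and $L=3E_8$ in the table (the lattice $2E_8\oplus A_1$ is of quasi-Niemeier type, not Niemeier). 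This reduces the problem to the range $3\leq \rank(L)\leq 11$.

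In that range, Theorem~\ref{th:2-reflective}(b) forces $R(L)$ to have full rank $n=\rank(L)$, to contain no irreducible component of type $A_1$, and to have all irreducible components of a common Coxeter number $h$; the weight $k=\beta_0(12+12h-nh/2)$ must moreover be positive. These two conditions leave only finitely many candidate root systems, which I would enumerate by fixing $h\in\{3,4,5,6,7,8,9,10,11,12\}$ and the handful of larger values compatible with $n\leq 11$ (namely those producing the irreducible root lattices $D_8,D_{10},E_7,E_8$). For every lattice in the final list I would establish existence directly by the constructions in \S\ref{sec:quasipullback}: each of $A_2,2A_2,3A_2,A_3,2A_3,A_4,A_5,A_6,A_7,D_4,2D_4,D_5,D_6,D_7,D_8,D_{10},E_6,E_7,E_8$ arises as a primitive sublattice of a Niemeier lattice whose corresponding pull-back of $\varphi_{0,N(R)}$ has $\norm_2$ condition satisfied, so that $\Phi_{12}\lvert_{2U\oplus L(-1)}$ is a genuine $2$-reflective modular form.

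For every other candidate I would apply one of four obstruction arguments. First, when the candidate embeds in a larger root system of different Coxeter number inside the same rank (e.g.\ $4A_2\subset E_6\oplus A_2$, $3A_3\subset D_6\oplus A_3$, and similarly for the $A$-series at higher ranks), Lemma~\ref{Lem:pullbacktest} transfers $2$-reflectivity upward, and the resulting lattice violates the uniform-$h$ conclusion of Theorem~\ref{th:2-reflective}(b). Second, when the discriminant group of $L$ contains no order-$2$, norm $-1/2$ reflective class (e.g.\ $L=2A_5,2D_5,D_9,D_{11}$), any putative $2$-reflective form would have complete $2$-divisor, contradicting \cite[Theorem 3.4]{Wan18} as soon as $\rank(L)>8$. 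Third, the case $L=A_{10}$ is of prime level $11$, so every $2$-reflective form is automatically one with complete $2$-divisor, giving the same contradiction. Fourth, for $L=2A_4$ I would rescale and use the isomorphism $2U(5)\oplus 2A_4^{\vee}(5)\cong U\oplus U(5)\oplus E_8(5)$ to produce a $10$-reflective modular form forbidden by \cite[Proposition~6.1]{Sch17}.

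The main obstacle, and the most delicate part of the argument, lies in the residual cases $L=A_5\oplus D_4,A_8,A_9,A_{11}$, where none of the coarse criteria above immediately applies. Here I would proceed as in the $A_{17}$ step of Theorem~\ref{th:sign19}: choose a Niemeier lattice $N(R)$ containing $L$ primitively so that the quasi pull-back of $\varphi_{0,N(R)}$ produces a weakly holomorphic Jacobi form $\phi_1\in J_{0,L}^!$ with the same $q^{-1}$ and $q^0$ Fourier coefficients as the Jacobi form $\phi_2$ associated to a hypothetical $2$-reflective modular form (this coincidence being forced by the formulas of Lemma~\ref{Lem:q^0-term} once the divisor is pinned down), and analyze $\phi:=(\phi_1-\phi_2)/\Delta$. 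After $\Orth(L)$-symmetrization the form $\phi$ is a weak Jacobi form of weight $-12$ whose $q^0$ support contains strictly fewer $\Orth(L)$-orbits than the minimum forced by the basis of $J_{*,L}^w$ described in \cite{Wir92,Ber00}, yielding a contradiction. Finally, I would close the classification by observing that a nontrivial even overlattice $L_1$ of some $R(L)$ in the list which still satisfies $R(L_1)=R(L)$ would require the minimal norm of each nonzero class of $L_1/R(L)$ to be an even integer at least $4$, and a direct inspection of the discriminant forms of the listed $R(L)$ shows that no such gluing exists.
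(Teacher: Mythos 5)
Your proposal follows the paper's proof in its overall architecture: the same reduction to $\rank(L)\leq 11$ via \cite[Theorem 3.6]{Wan18} and Theorems \ref{th:nonsign12} and \ref{th:sign19}, the same enumeration by Coxeter number through Theorem \ref{th:2-reflective}(b), the same existence constructions from \S\ref{sec:quasipullback}, and the same coarse obstructions (embedding into a root system of different Coxeter number combined with Lemma \ref{Lem:pullbacktest}; absence of an order-$2$ class of norm $-1/2$, which forces a complete $2$-divisor and contradicts the bound $\rank(L)\leq 8$ of \cite[Theorem 3.4]{Wan18}; the prime level $11$ of $A_{10}$; the rescaling of $2A_4$ against \cite[Proposition 6.1]{Sch17}). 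The one place you genuinely diverge is the treatment of the residual cases. First, $A_{11}$ does not belong there: $D(A_{11})\cong\ZZ/12$ has a single order-$2$ class $[6]$, of integral norm, so the complete-$2$-divisor criterion applies and disposes of it exactly as for $2A_5$; your claim that no coarse criterion covers $A_{11}$ is therefore incorrect, though the detour is harmless if your residual argument goes through. Second, for $A_5\oplus D_4$, $A_8$, $A_9$ the paper does not divide $\phi_1-\phi_2$ by $\Delta$ and count $\Orth(L)$-orbits against Wirthm\"uller's basis (that technique is reserved for $A_{17}$ in the proof of Theorem \ref{th:sign19}); instead it observes that the extra singular Fourier coefficients of the pull-back all have hyperbolic norm $\geq -1/2$, so that $\eta^{6}(\phi_1-\phi_2)$ is a nonzero holomorphic Jacobi form of weight $3$ with character, strictly below the singular weight $\rank(L)/2\geq 4$ --- a contradiction requiring no structure theory at all.

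Your orbit-counting claim is the soft spot. For $A_8$ and $A_9$ it is likely vacuous (the graded piece of weight $-12$ and index one weak Jacobi forms vanishes, since the Wirthm\"uller generators only reach weights $-8$ and $-10$ respectively, so $(\phi_1-\phi_2)/\Delta=0$ and the contradiction is immediate). But for $A_5\oplus D_4$ the weight $-12$ piece is nonzero (for instance the product of the weight $-6$ generators for $A_5$ and for $D_4$), so the assertion that the $q^0$-support of your difference has too few orbits genuinely requires the explicit generator computation you do not supply. I would either carry out that computation or switch to the paper's $\eta^{6}$/singular-weight argument, which is uniform across all three cases. The remaining deviation --- your list of excluded ranks omits $14$ and $18$ --- is typographical and does not affect the reduction to $\rank(L)\leq 11$.
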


\subsection{Classification of 2-reflective lattices of quasi-Niemeier type}
In this subsection we classify $2$-reflective lattices of quasi-Niemeier type. We use the notations in Theorem \ref{th:2-reflective}. Let $R(L)=mA_1\oplus R$ and $L=mA_1\oplus L_0$. We assume $\rank(L)\leq 11$. Let $M=2U\oplus L(-1)$. By Lemma \ref{Lem:pullbacktest}, if $M$ is $2$-reflective then $M=2U\oplus L_0(-1)\oplus (m-1)A_1(-1)$ is also $2$-reflective. Therefore, we only need to consider the root lattices given in Theorem \ref{th:2reflectNiemeier} for $L_0$. 
We shall prove the following theorem.

\begin{theorem}\label{th:quasiNiemeier}
Let $M=2U\oplus L(-1)$ be a $2$-reflective lattice of quasi-Niemeier type. Then $L$ is in  the genus of one of the following $21$ lattices 
\begin{align*}
A_1 &\oplus \{E_8, E_7, E_6, A_2, 2A_2, A_3, A_4, A_5, D_4, 2D_4, D_5, D_6, D_8\}\\
2A_1 & \oplus \{E_8, A_2, A_3, D_4, D_6\}\\
D_4& \oplus \{3A_1, 4A_1, 5A_1\} .
\end{align*}
Note that $3A_1\oplus D_6$ and $A_1\oplus 2D_4$, $2A_1\oplus E_7$ and $A_1\oplus D_8$ have the isomorphic discriminant form respectively.
\end{theorem}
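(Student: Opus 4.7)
The plan is to parallel the proof of Theorem \ref{th:2reflectNiemeier}. As noted just before the statement, Lemma \ref{Lem:pullbacktest} shows that if $2U \oplus (mA_1 \oplus L_0)(-1)$ is $2$-reflective then so is $2U \oplus L_0(-1) \oplus (m-1)A_1(-1)$; iterating forces $2U \oplus L_0(-1)$ itself to be $2$-reflective of Niemeier type, so by Theorem \ref{th:2reflectNiemeier} the lattice $L_0$ must lie in one of the twenty-one listed genera. Together with the rank bound $\rank(L)\leq 11$ recalled at the start of the section, this leaves only finitely many candidate pairs $(L_0,m)$, for each of which Theorem \ref{th:2-reflective}(c) pins down the weight $k$ and the multiplicity ratio $\beta_1/\beta_0=2h-3$ in terms of the common Coxeter number $h$ of the irreducible components of $L_0$.

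The existence half for the twenty-one listed lattices is supplied by the constructions of Section \ref{sec:quasipullback}. The $m=1$ and $m=2$ families fit into the second and third arguments, applied to sublattices $K_0\oplus mA_1$ of Niemeier lattices $N(R)$ in which $K_0$ realizes $L_0$ and the copies of $A_1$ are placed in distinct root summands of $R$ so that the $\norm_2$ condition (or its weaker analogue for the third argument) forces every singular Fourier coefficient of the pull-back $\varphi_{0,K_0\oplus mA_1}$ to come from a $2$-reflective divisor. The three families with $L_0=D_4$ and $m\in\{3,4,5\}$ are produced by the fifth argument from $R=6D_4$: spreading the copies of $A_1$ across distinct copies of $D_4$ ensures that the extra singular coefficients contributed by vectors of $N(6D_4)\setminus 6D_4$ remain $2$-reflective, just as in the $D_4\oplus 5A_1$ example worked out there.

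The main work is the non-existence half for each remaining pair $(L_0,m)$ with $\rank(mA_1\oplus L_0)\leq 11$. I will carry it out case by case using the three templates already deployed in the proof of Theorem \ref{th:2reflectNiemeier}. First, an embedding/overlattice argument: $mA_1\oplus L_0$ either sits inside a larger root lattice, or admits a small-index even overlattice whose $2$-reflectivity would contradict Theorem \ref{th:2reflectNiemeier} or an earlier case of the present proof, invoking Lemmas \ref{Lem:reductionMa} and \ref{Lem:pullbacktest}. Second, a complete-$2$-divisor obstruction: when the discriminant form of $mA_1\oplus L_0$ contains no $2$-reflective order-$2$ class beyond those already contributing, any hypothetical $2$-reflective form is a form with complete $2$-divisor, contradicting the rank bound $\rank(L)\leq 8$ of \cite[Theorem 3.4]{Wan18}. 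Third, a Jacobi form comparison: a quasi pull-back yields a reflective modular form whose $q^{-1}$ and $q^0$ Fourier coefficients match those forced on the assumed $2$-reflective form by Lemma \ref{Lem:q^0-term} with $\beta_1=(2h-3)\beta_0$, so that the difference of the two weakly holomorphic Jacobi forms, after multiplication by an appropriate power of $\eta$, becomes a holomorphic Jacobi form of weight strictly below the singular weight $\rank(L_0)/2$ of its index.

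The hardest step will be the Jacobi form comparison in the third template: for each excluded pair $(L_0,m)$ the extra singular coefficients introduced by the quasi pull-back at higher $q^n$-terms must be matched against the permitted reflective divisors of the ambient construction, and the resulting space of candidate weak Jacobi forms is then analyzed via the module structure of $J^{w}_{\ast,L_0}$ over $M_{\ast}(\SL_2(\ZZ))$ from \cite{Wir92, Ber00}. A final brief step rules out nontrivial even overlattices of the twenty-one listed $L=mA_1\oplus L_0$ in exactly the same manner as the overlattice paragraph of the proof of Theorem \ref{th:2reflectNiemeier}, completing the classification.
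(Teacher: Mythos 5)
Your skeleton matches the paper's: reduce via Lemma \ref{Lem:pullbacktest} to $L_0$ in the Niemeier-type list of Theorem \ref{th:2reflectNiemeier}, bound $m$ and the rank, supply existence from \S\ref{sec:quasipullback}, and exclude the remaining pairs case by case. Most exclusions in the paper are indeed instances of your first template (e.g.\ $m\le 5$ from $4A_1<D_4$ and $6A_1<D_6$; $2D_4\oplus 2A_1<D_4\oplus D_6$; $D_8\oplus 2A_1$ in the genus of $D_4\oplus D_6$; $U\oplus E_7\oplus 3A_1\cong U\oplus D_8\oplus 2A_1$) or of your third (e.g.\ $A_1\oplus 3A_2$ and $3A_1\oplus A_3$, where the difference with a quasi pull-back times $\eta^6$ violates the singular weight bound).

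However, there is a genuine gap: your three templates do not cover the case $L=E_8\oplus 3A_1$ (equivalently $A_1\oplus D_{10}$, which lies in the same genus), and this is precisely the case for which the paper needs a fourth tool. The paper uses Theorem \ref{th:2-reflective}(c) to pin down $\beta_1=(2h-3)\beta_0=57\beta_0$ and $k=81\beta_0$, and then invokes the holomorphicity of the leading Fourier--Jacobi coefficient \eqref{FJtheta} of the putative Borcherds product: the factor attached to each copy of $A_1$, namely $\eta^{154/3}\,(\vartheta(\tau,2z)/\eta)\,(\vartheta(\tau,z)/\eta)^{56}$, would be a holomorphic Jacobi form of index $30$ for $A_1$, yet the hyperbolic norm of its first Fourier coefficient equals $4\cdot\frac{1}{24}(\frac{154}{3}+114)\cdot 30-29^2=-\frac{43}{3}<0$, a contradiction. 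None of your templates can replace this. Your embedding/overlattice template fails because the discriminant form of $E_8\oplus 3A_1$ is anisotropic (so there is no proper even overlattice) and $3A_1$ is not a finite-index sublattice of any other rank-$3$ root lattice. Your complete-$2$-divisor template does not apply since $\beta_1=57\beta_0\neq\beta_0$. Your Jacobi-form comparison breaks down because any primitive embedding of $E_8\oplus 3A_1$ into a Niemeier lattice forces two of the copies of $A_1$ to sit inside a single root summand, so the quasi pull-back acquires nonzero singular Fourier coefficients of hyperbolic norm $-1$ (on classes of type $[1,1,0]$ in $D(3A_1)$, whose minimal vectors have norm $1$); the difference with the hypothetical $2$-reflective input then only yields, after multiplication by $\eta^{12}$, a holomorphic Jacobi form of weight $6$, which does not violate the singular weight $11/2$. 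You need to add the theta-block positivity argument (the same one the paper also uses to exclude $9A_1$ in \S\ref{sec:othertype}) as a separate template to close this case.
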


\begin{proof}
By \S \ref{sec:quasipullback}, when $L$ takes one of the above lattices,
the lattice $M$ is $2$-reflective. We next prove that $M$ is not $2$-reflective for other lattices.

\smallskip
\textbf{(1)} Since
$4A_1<D_4$ and  $6A_1<D_6$,   
we have $m\leq 5$. In addition, when $m=4$ or $5$, $L_0=D_4$ or $A_5$. But when $L_0=A_5$, $4A_1\oplus A_5 < A_5\oplus D_4$, which is impossible because $2U\oplus A_5\oplus D_4$ is not $2$-reflective. Thus when $m\geq 4$, the lattice $L_0$ can only take $D_4$.

\smallskip
\textbf{(2)} The lattice $L$ is not equal to $2D_4\oplus mA_1$ for $m\geq 2$ because $2D_4\oplus 2A_1 < D_4\oplus D_6$.

\smallskip
\textbf{(3)} The lattice $L$ is not equal to $E_8\oplus 3A_1$. If $2U\oplus E_8\oplus 3A_1$ is $2$-reflective, then we have by Theorem \ref{th:2-reflective} that $\beta_1=57\beta_0$ and $k=81\beta_0$. By Theorem \ref{th:BorcherdsJF}, the $q^0$-term of the corresponding weakly holomorphic Jacobi form of weight $0$ would define a holomorphic Jacobi form for $E_8\oplus 3A_1$ as a theta block (see \eqref{FJtheta}).  Thus the function corresponding to each copy of $A_1$
$$
\eta^{(162-8)/3} \left( \frac{\vartheta(\tau,2z)}{\eta(\tau)} \right)\left( \frac{\vartheta(\tau,z)}{\eta(\tau)} \right)^{56}
$$
is a holomorphic Jacobi form of index $30$ for $A_1$. We calculate the hyperbolic norm of its first Fourier coefficient
$$
4\times \frac{1}{24}\left( \frac{154}{3}+57\times 2 \right)\times 30 - 29^2 = -14.333... < 0,
$$
which contradicts the definition of holomorphic Jacobi forms.

Since $D_{10}$ and $E_8\oplus 2A_1$ are in the same genus, $M$ is not $2$-reflective if $L=D_{10}\oplus A_1$. 

\smallskip
\textbf{(4)} $L\neq D_{8}\oplus mA_1$ for $m\geq 2$. It is because that $D_8\oplus 2A_1$ and $D_4\oplus D_6$ are in the same genus. Furthermore, $L\neq E_{7}\oplus 3A_1$ because $U\oplus E_7\oplus 3A_1 \cong U\oplus D_8\oplus 2A_1$.

\smallskip
\textbf{(5)} $L\neq A_1 \oplus 3A_2$. Otherwise, suppose there exists a $2$-reflective modular form for $2U\oplus A_1\oplus 3A_2$ and we note the corresponding Jacobi form of weight $0$ by $\phi$. On the other hand, the pull-back of $\varphi_{0,N(R)}$ on $A_1\oplus 3A_2 < N(12A_2)$ will also give a Jacobi form of weight $0$ which is noted by $\phi_1$. Using the idea in this section, we conclude that $\phi$ and $\phi_1$ have the same $q^0$-term and the difference $\psi:=\phi-\phi_1$ would give a Jacobi form of weight $0$ and index $A_1\oplus 3A_2$ without $q^{-1}$ and $q^0$-terms. This function is not zero and its singular Fourier coefficients are represented by $(\frac{1}{2})\oplus [1]^3$ which has hyperbolic norm $-1/2$ and does not correspond to $2$-reflective divisors. Thus $\eta^{6}\psi$ is a holomorphic Jacobi form of weight $3$ with a character for $A_1\oplus 3A_2$, which contradicts the singular weight.

$L\neq 3A_1\oplus A_3$. Suppose that $2U\oplus 3A_1\oplus A_3$ is $2$-reflective and we denote the corresponding Jacobi form of weight $0$ by $\phi$. The pull-back of $\varphi_{0,N(R)}$ on $3A_1\oplus A_3 < N(8A_3)$ gives a Jacobi form of weight $0$ (noted by $\phi_1$). The functions $\phi$ and $\phi_1$ have the same $q^0$-term and their difference $f:=\phi-\phi_1=O(q)$ is a Jacobi form of weight $0$ for $3A_1\oplus A_3$. This function is not zero and its singular Fourier coefficients are represented by $v_1:=(\frac{1}{2},\frac{1}{2},\frac{1}{2})\oplus [1]$ (with hyperbolic norm $-1/4$) and $v_2:=(\frac{1}{2},\frac{1}{2},\frac{1}{2})\oplus [2]$ (with hyperbolic norm $-1/2$). Hence $\eta^{6}f$ is a holomorphic Jacobi form of singular weight $3$ with a character for $3A_1\oplus A_3$. This contradicts the singular weight because there is a non-zero Fourier coefficient with non-zero hyperbolic norm, i.e. $q^{1/4}\zeta^{(v_1,\mathfrak{z})}$ with hyperbolic norm $1/4$.

All other cases can be proved in a similar way. Since the pull-back of $\varphi_{0,N(R)}$ has additional singular Fourier coefficients in its $q^1$-term which do not correspond to $2$-reflective divisors, we can construct a holomorphic Jacobi form of low weight with a character, which would contradict the singular weight.  This completes the proof.
\end{proof}

\subsection{Classification of 2-reflective lattices of other type}\label{sec:othertype}
In this subsection, we discuss the final case: $R(L)=nA_1$. Firstly, if $\beta_0=0$, the only possible case is $L=nA_1$. In this case, the weight $k$ is equal to $(6-n)\beta_1$. In view of the singular weight, we have $k\geq n\beta_1/2$ since $\eta^{2k/n}(\vartheta(\tau,z)/ \eta)^{\beta_1}$ is a holomorphic Jacobi form. Therefore we get $1\leq n \leq 4$. The corresponding $2$-reflective modular forms can be constructed as the quasi pull-backs of the $2$-reflective modular form of singular weight $2$ for $2U\oplus 4A_1$ (see \cite[\S 5.1]{Gri18}). In view of Theorem \ref{th:2-reflective}, we thus prove the following.

\begin{theorem}
If $M=2U\oplus L(-1)$ has a $2$-reflective modular form with $\beta_0=0$ in its zero divisor, then $L=nA_1$ with $1\leq n \leq 4$.
\end{theorem}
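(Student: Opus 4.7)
The plan is to apply Theorem \ref{th:2-reflective} to reduce to the case $R(L)=nA_1$, derive the weight formula in that case, and then bound $n$ from above using a holomorphicity constraint on the first Fourier--Jacobi coefficient of $F$.

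First I would use Theorem \ref{th:2-reflective} to rule out the other possibilities. If $R(L)$ is empty, the weight equals $12\beta_0=0$ and $F$ is constant. In case (b) the weight formula gives $\beta_0(12+12h-nh/2)=0$, again trivial. In case (c) the relation $\beta_1=(2h-3)\beta_0$ forces $\beta_1=0$, so all multiplicities vanish. Hence we are in case (a), i.e.\ $R(L)=nA_1$ with $n=\rank(L)$ and a common value $\beta_1>0$ of $\beta_\mu$ over those $\mu$ with $R_\mu(L)\ne\emptyset$. Assuming momentarily that $L=nA_1$, the quantities $|R(L)|=2n$ and $\sum_\mu |R_\mu(L)|=2n$ reduce the weight formula \eqref{eq:weight} to $k=(6-n)\beta_1$.

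The main step is the holomorphicity constraint. By Theorem \ref{th:BorcherdsJF} the first Fourier--Jacobi coefficient of $F$ is the theta block
\[
\psi_{L,C}(\tau,\mathfrak{z})=\eta(\tau)^{2k}\prod_{i=1}^{n}\Bigl(\frac{\vartheta(\tau,z_i)}{\eta(\tau)}\Bigr)^{\beta_1},
\]
and must be a holomorphic Jacobi form of weight $k$ and index $L(C)=nA_1(\beta_1/2)$. A direct expansion identifies its leading Fourier term: the $q$-exponent is $(k+n\beta_1)/12$ and the extremal $\zeta$-exponent corresponds to the vector $\ell_0=(1/2,\ldots,1/2)\in L(C)^\vee$ with $(\ell_0,\ell_0)_{L(C)}=n\beta_1/4$. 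The hyperbolic non-negativity condition $2N_0-(\ell_0,\ell_0)\ge 0$ simplifies to $k\ge n\beta_1/2$. Combined with $k=(6-n)\beta_1$ and $\beta_1>0$, this forces $2(6-n)\ge n$, i.e.\ $n\le 4$. I expect the main obstacle to be precisely this step: one must compute the leading Fourier coefficient of $\psi_{L,C}$ in the correct normalization for the index lattice $L(C)$, which requires care because $C$ is not necessarily an integer; once that is pinned down, the inequality is immediate.

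To finish, I would rule out proper overlattices and record existence. Any even overlattice of $nA_1$ corresponds to an isotropic coset in $D(nA_1)=(\ZZ/2)^n$, and such a coset with $k$ non-zero coordinates has norm $k/2\pmod\ZZ$, which vanishes only when $k\equiv 0\pmod 4$. For $n\le 3$ no such coset exists, and for $n=4$ the unique candidate $(1/2,1/2,1/2,1/2)$ produces the overlattice $D_4$, whose root sublattice is $D_4\ne 4A_1$ and is incompatible with case (a) of Theorem \ref{th:2-reflective}. Therefore $L=nA_1$ exactly. Existence for $n=4$ is the singular-weight $2$-reflective form on $2U\oplus 4A_1$ constructed in \cite[\S 5.1]{Gri18}; for $n=1,2,3$ a $2$-reflective form on $2U\oplus nA_1$ with $\beta_0=0$ is then obtained as the quasi pull-back of that form along the primitive embedding $nA_1\hookrightarrow 4A_1$, the orthogonal complement $(4-n)A_1(-1)$ contributing only reflective (not principal) divisors and preserving $\beta_0=0$.
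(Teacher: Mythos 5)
Your proof is correct and follows essentially the same route as the paper: reduction to $R(L)=nA_1$ via Theorem \ref{th:2-reflective}, the weight formula $k=(6-n)\beta_1$, and holomorphicity of the theta block $\eta^{2k/n}(\vartheta(\tau,z)/\eta)^{\beta_1}$ forcing $k\ge n\beta_1/2$ and hence $n\le 4$. The paper's argument in \S\ref{sec:othertype} is a terser version of exactly this, including the same quasi pull-back construction from the singular-weight form on $2U\oplus 4A_1$.
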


By \S \ref{sec:quasipullback}, when $L=nA_1$ with $1\leq n \leq 8$, the lattice $M$ is $2$-reflective. For the overlattices, $2U\oplus N_8$ and $2U\oplus N_8\oplus A_1$ are $2$-reflective. The lattice $2U\oplus N_8\oplus 2A_1$ is not $2$-reflective because
$$
2A_1\oplus N_8 < 2A_1\oplus 2D_4 < D_4\oplus D_6.
$$

To complete the classification, we show that $2U\oplus 9A_1$ is not $2$-reflective. Conversely, suppose that there exists a $2$-reflective modular form for $2U\oplus 9A_1$. By \S \ref{sec:quasipullback}, the lattice $2U\oplus 8A_1$ is $2$-reflective and the $2$-reflective modular form is constructed as the quasi pull-back on $8A_1<N(8A_3)$. For this $2$-reflective modular form, we have $\beta_1=5$. We claim that this function is the unique $2$-reflective modular form for $2U\oplus 8A_1$ up to a constant. Otherwise, by considering the difference between the two independent $2$-reflective modular forms,  we would get a weak Jacobi form of weight $0$ for $8A_1$ whose minimal hyperbolic norm of singular Fourier coefficients is $-1/2$. Thus its product with $\eta^6$ would give a holomorphic Jacobi form of weight $3$ for $8A_1$, which contradicts the singular weight. 

The quasi pull-back of the $2$-reflective modular form for $2U\oplus 9A_1$ is the $2$-reflective modular form for $2U\oplus 8A_1$. Therefore, we have $\beta_1=5\beta_0$ in the case of $9A_1$. Thus the weight is given by
$$
k=\beta_0\left( 12+18\left(\frac{12}{9}-\frac{1}{2}\right) \right)+\left(\frac{3}{9}-\frac{1}{2}\right)\times 18\times 4\beta_0=15\beta_0.
$$
The $q^0$-term of the corresponding Jacobi form of weight $0$ defines a holomorphic Jacobi form for $9A_1$ as a theta block. Then the part related to each copy of $A_1$
$$
\eta^{30/9}(\tau) \left( \frac{\vartheta(\tau,2z)}{\eta(\tau)} \right)\left( \frac{\vartheta(\tau,z)}{\eta(\tau)} \right)^4 
$$
is a holomorphic Jacobi form of index $4$ for $A_1$. But the hyperbolic norm of its first Fourier coefficient is 
$$
4\times \frac{1}{24}\left( \frac{30}{9} + 2\times 5 \right)\times 4 - 3^2= - \frac{1}{9}<0,
$$
which gives a contradiction.

\subsection{Final classification}

\begin{proof}[Proof of Theorem \ref{th:non12}]
Combining \cite[Theorem 3.8]{Wan18}, Theorem \ref{th:nonsign12}, Theorem \ref{th:sign19} together, we complete the proof.
\end{proof}

\begin{proof}[Proof of Theorem \ref{th:main2reflective}]
By Theorem \ref{th:2-reflective}, Theorem \ref{th:sign19}, Theorem \ref{th:2reflectNiemeier}, Theorem \ref{th:quasiNiemeier} and \S \ref{sec:othertype}, we conclude the proof. The only thing that we need to explain is the following. Every lattice listed in (c) has a $2$-reflective modular form constructed as a quasi pull-back of $\Phi_{12}$. For every such quasi pull-back, we have that $\beta_0=1$ and the weight $k>12$. Thus every lattice in the genus of $L$ has $2$-roots. Moreover, the quasi pull-back is a cusp form and then its Weyl vector has positive norm. 
\end{proof}

\begin{proof}[Proof of Corollary \ref{th:2reflectivesingularweight}]
It is a direct consequence of Theorem \ref{th:main2reflective} and the weight formula \eqref{eq:weight}.
\end{proof}

As an application, we give a classification of modular forms with complete $2$-divisor.

\begin{theorem}\label{th:complete2reflective}
Suppose $M=2U\oplus L(-1)$ has a modular form with complete $2$-divisor and the set of $2$-roots of $L$ is non-empty. Then $L$ is in the genus of $3E_8$ or one of the lattices formulated in Table \ref{tab: GN1}.
\end{theorem}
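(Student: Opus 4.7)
The plan is to combine Theorem \ref{th:main2reflective} with Theorem \ref{th:2-reflective} to cut the possibilities for $L$ down to a short explicit list, and then match that list against Table \ref{tab: GN1} together with $3E_8$. A modular form with complete $2$-divisor is in particular $2$-reflective with $\beta_0 = 1$ and $\beta_\mu = 1$ for every $\mu \in \pi_M$ in the decomposition \eqref{eq:divisor}. Since $L$ has $2$-roots, Theorem \ref{th:main2reflective} places $L$ either in the genus of $3E_8$ (via the $\II_{2,26}$ case) or in the genus of one of the fifty lattices of part (c).

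Apply Theorem \ref{th:2-reflective} to every representative in the genus of $L$. Case (c) of that theorem forces $\beta_1 = (2h-3)\beta_0$ with $h \geq 3$ the common Coxeter number of the non-$A_1$ components of $R(L)$; imposing $\beta_1 = \beta_0$ gives $h = 2$, a contradiction. Hence no representative of the genus may be of mixed type $mA_1 \oplus R$ with $R$ nonempty. Using the four genus identifications in Theorem \ref{th:main2reflective}(c), namely $5A_1\oplus D_4 \sim A_1\oplus N_8$, $A_1\oplus 2D_4\sim 3A_1\oplus D_6$, $A_1\oplus D_8\sim 2A_1\oplus E_7$, and $2A_1\oplus E_8 \sim D_{10}$, this removes every explicit mixed entry of the fifty-lattice list and the Niemeier-type partners such as $D_{10}$ whose genus contains a mixed representative. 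The surviving candidates are the $19$ pure Niemeier-type lattices $A_2,2A_2,3A_2,A_3,2A_3,A_4,A_5,A_6,A_7,D_4,2D_4,D_5,D_6,D_7,D_8,E_6,E_7,E_8,2E_8$ coming from Theorem \ref{th:2reflectNiemeier}, the lattices $nA_1$ for $1 \leq n \leq 8$, $N_8$, and $3E_8$.

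For the $24$ entries of Table \ref{tab: GN1} --- the above nineteen Niemeier lattices together with $A_1, 2A_1, 3A_1, 4A_1, N_8$ --- existence of a modular form with complete $2$-divisor is given by the Gritsenko--Nikulin construction in the first argument of \S\ref{sec:quasipullback}, and for $3E_8$ by $\Phi_{12}$ itself. The remaining nonexistence claim for $L = nA_1$ with $n \in \{5,6,7,8\}$ is the main obstacle. My plan here adapts the Jacobi-form comparison used for $9A_1$ in \S\ref{sec:othertype}: assume such a form exists, so by \eqref{eq:weight} it has weight $36 - n$ and its Borcherds input $\phi \in J_{0,nA_1}^!$ has the prescribed singular part with $\beta_0 = \beta_\mu = 1$; then form the difference with a suitable scalar multiple of the $2$-reflective Jacobi form on $2U \oplus nA_1(-1)$ constructed in the fifth argument of \S\ref{sec:quasipullback} (proved unique by the same dimension count used for $8A_1$ in \S\ref{sec:othertype}), matched so that the $q^{-1}$-coefficient and the reflective $q^0$-coefficients cancel. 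The remainder is a weak Jacobi form of weight $0$ for $nA_1$ whose remaining singular Fourier coefficients have hyperbolic norm at least $-1/2$, so $\eta(\tau)^{6}\cdot(\text{remainder})$ is a nonzero holomorphic Jacobi form of weight $3$ with a character; for $n = 7, 8$ this contradicts the singular weight $n/2$, while for $n = 5, 6$ one further invokes the first Fourier--Jacobi coefficient \eqref{FJtheta} of the assumed $\Borch(\phi)$ to exhibit a theta block with a lowest Fourier coefficient of negative hyperbolic norm, contradicting its holomorphicity. Carrying out the uniqueness step and the subsequent theta-block computation for each of the four values of $n$ is the essential technical work.
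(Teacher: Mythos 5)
Your overall architecture matches the paper's: the quasi-Niemeier lattices are killed by $\beta_1=(2h-3)\beta_0\neq\beta_0$ applied to every representative of the genus, the Niemeier-type survivors are exactly the Table \ref{tab: GN1} entries plus $3E_8$, and the whole theorem reduces to excluding $L=nA_1$ for $5\le n\le 8$. The gap is precisely there. Your $\eta^6\cdot(\text{difference})$ argument produces a holomorphic Jacobi form of weight $3$, which contradicts the singular weight $n/2$ only for $n=7,8$; for $n=6$ it needs the extra observation that the difference has a nonzero constant term (the pull-back form has weight $20+\cdots$ while the complete-$2$-divisor form would have weight $36-n$, so the $q^0$-constants disagree and $\eta^6 g$ has a coefficient of hyperbolic norm $1/2\neq 0$), and for $n=5$ it fails outright since $3>5/2$. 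Your two fallbacks for $n=5$ both break down: the uniqueness claim is false (the paper explicitly exhibits \emph{two} independent $2$-reflective forms on $2U\oplus 5A_1$, and the weight-$3$-versus-singular-weight uniqueness argument only works for $\operatorname{rank}>6$), and the theta block \eqref{FJtheta} of the hypothetical complete-$2$-divisor form is $\eta^{2k}\prod_{i=1}^{5}\bigl(\vartheta(\tau,2z_i)/\eta\bigr)$ with $k=31$ and $C=2$, whose per-variable piece $\eta^{57/5}\vartheta(\tau,2z)$ has first Fourier coefficient of hyperbolic norm $4\cdot\tfrac{3}{5}\cdot 2-1=\tfrac{19}{5}>0$; there is no negative-norm coefficient to contradict.

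The paper's actual idea for $5A_1$ is different and is the essential missing ingredient: it constructs the pull-back $\phi_{0,5A_1}$ from $5A_1<N_8$, whose \emph{entire} $q^{\le 0}$ part (including the constant term $62$, i.e.\ the same weight $31$) coincides with that of the hypothetical complete-$2$-divisor input, the only discrepancy being a hyperbolic-norm $-1/2$ coefficient sitting in the $q^1$-term at the class $\tfrac12\sum\alpha_i$. Hence the difference is $O(q)$ and dividing by $\Delta$ yields a nonzero weak Jacobi form of weight $-12$ and index $1$ for $5A_1$, contradicting Wirthm\"uller's structure theorem (minimal weight $-10$). Once $5A_1$ is excluded, $6A_1$, $7A_1$, $8A_1$ follow at once because the quasi pull-back along the orthogonal splitting $nA_1=5A_1\oplus(n-5)A_1$ carries a complete-$2$-divisor form to a complete-$2$-divisor form on $2U\oplus 5A_1$ — which also makes your case-by-case plan for $n=6,7,8$ unnecessary. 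Without the $\Delta$-divisibility/Wirthm\"uller step (or some substitute), the case $n=5$, and hence the theorem, is not proved.
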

\begin{proof}
Firstly, from the formula $\beta_1=(2h-3)\beta_0$ in Theorem \ref{th:2-reflective}, we see that there is no modular form with complete $2$-divisor for $2$-reflective lattices of quasi-Niemeier type. We next consider the lattices of type $mA_1$. We first construct a $2$-reflective modular form for $2U\oplus 5A_1$ whose $2$-reflective divisor of type $(0,1,(\frac{1}{2},\frac{1}{2},\frac{1}{2},\frac{1}{2},\frac{1}{2}),1,0)$ has multiplicity $9$.   Let $8A_1=\oplus _{i=1}^8\ZZ\alpha_i $ with $\alpha_i^2=2$. The Nikulin lattice is $N_8=\latt{8A_1, h}$, where $h=\frac{1}{2}\sum_{i=1}^8 \alpha_i$. It is known that there is a modular form with complete $2$-divisor for $2U\oplus N_8$ (see \S \ref{sec:quasipullback}). Thus there is a weakly holomorphic Jacobi form $\phi_{0,N_8}$ of weight $0$ for $N_8$ with the singular Fourier coefficients
$$
\sing(\phi_{0,N_8})=q^{-1}+56+\sum_{n\in \NN}\sum_{\substack{r\in N_8\\ r^2=2n+2}} q^n e^{2\pi i (\mathfrak{z},r)}.
$$
We consider the pull-back on $5A_1<N_8$
$$
\phi_{0,5A_1}(\tau,\mathfrak{z}_5)=q^{-1}+62+\sum_{i=1}^5 \zeta_i^{\pm 2} +O(q),
$$ 
where $\mathfrak{z}_5=\sum_{i=1}^5z_i\alpha_i$ and $\zeta_i=e^{2\pi iz_i}$.
We need to determine the singular Fourier coefficients in the $q^1$-term. This type of Fourier coefficients is of the form $\frac{1}{2}\sum_{i=1}^5 \alpha_i$ and it comes from the pull-back of vectors of norm $4$ in $N_8$  of type $\frac{1}{2}\sum_{i=1}^5 \alpha_i \pm \frac{1}{2}\alpha_6\pm \frac{1}{2}\alpha_7\pm \frac{1}{2}\alpha_8$. Thus the coefficient of $q\zeta_1\zeta_2\zeta_3\zeta_4\zeta_5$ is $8$. The Borcherds product of $\phi_{0,5A_1}$ gives the desired $2$-reflective modular form.  Suppose that $2U\oplus 5A_1$ has a modular form with complete $2$-divisor and we denote the corresponding Jacobi form of weight zero by $\psi_{0,5A_1}$. Then $g:=\phi_{0,5A_1}-\psi_{0,5A_1}$ is a non-zero weak Jacobi form of weight $0$ without $q^0$-term. It follows that $g/\Delta$ is a weak Jacobi form of weight $-12$, which is impossible because the minimal weight of weak Jacobi forms of index $1$ for $5A_1$ is $-10$ (see \cite[Theorem 3.6]{Wir92} or \cite[Theorem 3.1]{Wan21}). Thus, $2U\oplus 5A_1$ has no modular form with complete $2$-divisor. In view of the pull-back, we conclude that $2U\oplus mA_1$ has no modular form with complete $2$-divisor when $m\geq 6$. The proof is complete.
\end{proof}

Note that there are in fact two independent 2-reflective modular forms for $2U\oplus 5A_1$. The second one can be constructed as the quasi pull-back of $\Phi_{12}$ on $5A_1<N(8A_3)$ (see \S \ref{sec:quasipullback}).

Remark that there are lattices not of type $2U\oplus L$ which have a modular form with complete $2$-divisor, such as $U(2)\oplus \latt{-2}\oplus (k+1)\latt{2}$ with $1\leq k\leq 7$ (see \cite[Theorem 6.1]{GN18}).

\begin{proposition}
If $M$ is a maximal even lattice of signature $(2,10)$ having a modular form with complete $2$-divisor, then it is isomorphic to $\II_{2,10}$.
\end{proposition}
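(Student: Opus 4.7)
The plan is to use maximality together with Ma's lemma to reduce to the split form $M = 2U \oplus L(-1)$ with $L$ a maximal positive-definite even lattice of rank $8$, then apply Theorem \ref{th:complete2reflective} (for the subcase where $L$ has $2$-roots) and the Leech-type construction of \S \ref{sec:quasipullback} (for the no-$2$-roots subcase) to restrict $L$'s genus to a short list, and finally rule out every candidate except $E_8$ by exhibiting nontrivial isotropic elements in their discriminant forms.

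First, since $M$ has signature $(2,10)$ with $10 \geq 3$, Lemma 1.7 of \cite{Ma18} (already invoked in the proof of Theorem \ref{th:sign19}) guarantees an even overlattice $M'$ of $M$ containing $2U$; maximality forces $M = M'$, so $M$ itself contains $2U$. Because $2U$ is unimodular, $M$ splits as $M = 2U \oplus L(-1)$ with $L$ a positive-definite even rank-$8$ lattice, and even overlattices of $M$ correspond bijectively to even overlattices of $L$. Consequently $M$ is maximal if and only if $L$ is maximal.

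Next I would split on whether $L$ has $2$-roots. If $L$ has $2$-roots, then Theorem \ref{th:complete2reflective} restricts $L$'s genus to $E_8$, $D_8$, $2D_4$, or $N_8$ (the rank-$8$ entries of Table \ref{tab: GN1}). If $L$ has no $2$-roots, then by Theorem \ref{th:main2reflective}(b) the modular form has weight $12\beta_0 = 12$ and the associated Jacobi form satisfies $\phi_L = q^{-1} + 24 + O(q)$; the only rank-$8$ instance of such a Jacobi form among the Leech-sublattice constructions listed in \S \ref{sec:quasipullback} is $L = E_8(2)$. For each of the five candidate genera I would check maximality: $E_8$ is unimodular and hence maximal, while each of the others admits a proper even overlattice because its discriminant form contains a nonzero isotropic element. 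Explicitly, for $D_8$ the glue $[1]$ satisfies $[1]^2 = 2$; for $2D_4$ the diagonal $([1],[1])$ has norm $2$; for $N_8 = \langle 8A_1, h\rangle$, using the parametrization of $D(N_8)$ by even-weight $\epsilon \in \{0,1\}^8$, the classes $\tfrac{1}{2}(\alpha_{i_1}+\cdots+\alpha_{i_4})$ have norm $2$; and for $E_8(2)$ any $v/2$ with $v \in E_8$ of $E_8$-norm $4$ satisfies $(v/2,v/2)_{E_8(2)} = 2$. Since the discriminant form is a genus invariant, no representative of the four excluded genera is maximal, so $L \cong E_8$ and $M \cong 2U \oplus E_8(-1) = \II_{2,10}$.

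The main obstacle is the no-$2$-roots subcase, since the classification of Leech-type $2$-reflective lattices is declared open in the introduction. One would need either to verify that the list of ten Leech-sublattice examples in \S \ref{sec:quasipullback} exhausts the rank-$8$ candidates for the required $\phi_L$, or to invoke Scheithauer-type obstructions on the Weil representation of $D(L)$ to rule out alternative candidates. A cleaner route would be to prove directly that every positive-definite even rank-$8$ lattice with minimum norm $\geq 4$ admits a nontrivial even overlattice, which would eliminate the no-$2$-roots case wholesale under the maximality hypothesis.
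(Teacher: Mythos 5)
Your reduction to $M=2U\oplus L(-1)$ with $L$ maximal of rank $8$ is fine, and your treatment of the subcase where $L$ has $2$-roots is sound: Theorem \ref{th:complete2reflective} does confine $L$ to the genera of $E_8$, $D_8$, $2D_4$, $N_8$, and your isotropic-element computations correctly eliminate all but $E_8$. The genuine gap is the no-$2$-roots subcase, and you have in effect conceded it yourself. The ten Leech-sublattice examples in \S \ref{sec:quasipullback} are \emph{constructions}, not a classification; the paper explicitly states in the introduction and in \S \ref{sec:open} that the classification of Leech-type (i.e.\ type (b)) $2$-reflective lattices is open. So nothing entitles you to conclude that $E_8(2)$ is the only rank-$8$ lattice without $2$-roots admitting the required $\phi_L=q^{-1}+24+O(q)$, and your fallback claim --- that every maximal positive-definite even lattice of rank $8$ has a vector of norm $2$ --- is asserted, not proved (anisotropy of the discriminant form bounds its length but does not obviously bound the minimum of the lattice). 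As written, the argument does not close.

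For comparison, the paper's proof avoids the case split entirely and never touches the Leech-type classification. Refining \cite[Theorem 3.4]{Wan18}, one applies the differential operators of Lemma \ref{Lem:differential} to the weight-$0$ input $\phi_L$ and cancels both types of singular Fourier coefficients to produce a \emph{holomorphic} Jacobi form $g$ of singular weight $4$ and index $L$ with constant term $1$. Being of singular weight, $g$ is a linear combination of the theta functions \eqref{eq:ThetaFunction}; maximality of $L$ rules out any $\gamma\in L^\vee\setminus L$ with $(\gamma,\gamma)=2$, so the $q^1$-coefficient of $g(\tau,0)=E_4(\tau)$ counts exactly the $2$-roots of $L$. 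Hence $\lvert R_L\rvert=240$, which simultaneously kills the no-$2$-roots case and, via Theorem \ref{th:2-reflective} (Coxeter number $240/8=30$), forces $L\cong E_8$. If you want to salvage your two-case structure, you must either supply a proof that a maximal even positive-definite lattice of rank $8$ necessarily contains a $2$-root, or replace case (b) by an argument of this analytic kind.
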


\begin{proof}
It is a refinement of the proof of \cite[Theorem 3.4]{Wan18}. Firstly, the lattice $M$ can be written as $M=2U\oplus L(-1)$. There exists a weakly holomorphic Jacobi form of weight $0$ and index $L$. As the proof of \cite[Theorem 3.4]{Wan18}, we can construct a holomorphic Jacobi form of weight $4$ and index $L$, denoted by $g$. It is easy to check that the constant term of $g$ is not zero and we assume it to be $1$. The function $g$ has singular weight $4$. Thus, it is a $\CC$-linear combination of theta-functions for $L$ defined as \eqref{eq:ThetaFunction}. Since $L$ is maximal, there is no $\gamma\in L^\vee$ such that $\gamma\not\in L$ and $(\gamma,\gamma)=2$. Hence, the $q^1$-term of the Fourier expansion of $g$ comes only from the theta-function $\Theta_0^L$. In view of $g(\tau,0)=E_4(\tau)=1+240q+...$, the number of $2$-roots in $L$ is $240$. By Theorem \ref{th:2-reflective}, the Coxeter number of $L$ is $30$, which forces $L$ to be isomorphic to $E_8$. The proof is complete.
\end{proof}

\section{Application: automorphic correction of hyperbolic 2-reflective lattices}\label{sec:autocorrection}
An even lattice $S$ of signature $(1,n)$ is called hyperbolic 2-reflective if the subgroup generated by $2$-reflections is of finite index in the orthogonal group of $S$, i.e.
$$
W^{(2)}=\latt{\sigma_r: r\in S, r^2=-2}< \Orth(S)
$$
is of finite index. The lattice $S$ is called hyperbolic reflective if the subgroup generated by all reflections has finite index in $\Orth(S)$.  Hyperbolic reflective lattices are closely related to reflective modular forms. In \cite[Theorem 12.1]{Bor98}, Borcherds proved that if the lattice $U\oplus S$ has a reflective (resp. $2$-reflective) modular form with a Weyl vector of positive norm then $S$ is hyperbolic reflective (resp. $2$-reflective).

Hyperbolic 2-reflective lattices are of a special interest because of its close connection with the theory of K3 surfaces.  The classification of such lattices is now available thanks to the work of Nikulin and Vinberg (see \cite{Nik81} for $n\geq 4$, \cite{Nik84} for $n=2$, \cite{Vin07} for $n=3$, and a survey \cite{Bel16}). Table \ref{tab:hyperbolic} gives the number of hyperbolic 2-reflective lattices of fixed rank. The models of all these lattices can be found in \cite[\S 3.2]{GN18}. For $\rank(S)=10$, we need to add the lattice $U\oplus D_4\oplus 4A_1$ to the table in \cite[\S 3.2]{GN18}.

\begin{table}[ht]
\caption{The number of hyperbolic 2-reflective lattices}
\label{tab:hyperbolic}
\renewcommand\arraystretch{1.5}
\noindent\[
\begin{array}{|c|c|c|c|c|c|c|c|c|c|c|c|c|c|c|}
\hline 
\rank(S) & 3& 4& 5& 6& 7& 8& 9& 10& 11& 12& 13& 14& 15,...,19& \geq 20 \\ 
\hline 
\text{Number}& 26& 14& 9& 10& 9& 12& 10& 9& 4& 4& 3& 3& 1&  0\\
\hline
\end{array} 
\]
\end{table}

In \cite{Bor00}, Borcherds suggested that interesting hyperbolic reflective lattices should be associated to reflective modular forms. In view of this suggestion, Gritsenko and Nikulin \cite{GN18} considered the following automorphic correction of hyperbolic $2$-reflective lattices. 

\begin{definition}
Let $S$ be a hyperbolic $2$-reflective lattice. If there exists a positive integer $m$ such that $U(m)\oplus S$ has a $2$-reflective modular form, then we say that $S$ has an automorphic correction. 
\end{definition}

By means of the classification results in the previous section, we prove the following theorem.

\begin{theorem}\label{th:autocorrection}
Let $S$ be a hyperbolic $2$-reflective lattice of signature $(1,n)$ with $n\geq 5$. If $S$ is one of the following $18$ lattices 
\begin{align*}
&U\oplus E_8\oplus E_7& &U\oplus E_8\oplus D_6& &U\oplus E_8\oplus D_4\oplus A_1& &U\oplus E_8\oplus D_4& &U\oplus D_8\oplus D_4&\\
&U\oplus E_8\oplus 4A_1& &U\oplus E_8\oplus 3A_1& &U\oplus D_8\oplus 3A_1& &U\oplus E_8\oplus A_3& &U\oplus D_8\oplus 2A_1&\\
&U\oplus 2D_4\oplus 2A_1& &U\oplus E_8\oplus A_2& &U\oplus E_6\oplus A_2& &U\oplus D_4\oplus A_3& &U\oplus D_5\oplus A_2&\\
&U\oplus D_4\oplus A_2& &U\oplus A_4\oplus A_2& &U\oplus A_3\oplus A_2&
\end{align*}
then it has no automorphic correction. If $S$ is one of the other $51$ lattices, it has at least one automorphic correction.
\end{theorem}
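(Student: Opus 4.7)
The strategy is to split the $69$ hyperbolic $2$-reflective lattices of rank $\geq 6$ into those arising from the classification Theorem \ref{th:main2reflective} and those that do not, and to argue each direction separately. For the existence direction, the plan is to identify each of the $51$ claimed good lattices with some $U \oplus L(-1)$, where $L$ ranges over the $50$ lattices of Theorem \ref{th:main2reflective}(c) together with $L = 3E_8$ coming from case (a). For each such $L$, the 2-reflective modular form on $2U \oplus L(-1) = U \oplus S$ supplied by Theorem \ref{th:main2reflective} has a Weyl vector of positive norm, so Borcherds' criterion \cite[Theorem 12.1]{Bor98} applies and $m=1$ is an automorphic correction of $S$. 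A comparison with the Nikulin--Vinberg list (Table \ref{tab:hyperbolic}), taking into account the genus identifications recorded in Theorem \ref{th:main2reflective}(c), should then recover exactly these $51$ lattices.

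For the $18$ bad lattices, I would assume for contradiction that $M := U(m) \oplus S$ carries a $2$-reflective modular form for some $m \geq 1$. When $\rank S \geq 14$, which covers the first $6$ lattices in the list, $M$ has signature $(2,n)$ with $n \geq 14$, so Theorem \ref{th:non12} forces $M$ to be isomorphic to $\II_{2,18}$, $2U \oplus 2E_8(-1) \oplus A_1(-1)$, or $\II_{2,26}$. A direct comparison of discriminant forms, using $D(M) = D(U(m)) \oplus D(S)$ together with the nontrivial contribution $m^2$ to $|D(M)|$, produces an immediate contradiction for each of these three candidates. For the remaining $12$ lattices with $7 \leq \rank S \leq 13$, I would apply Lemma \ref{lem:2U} to $M$ or to one of its even overlattices: the bound $l(D(M)) \leq 2 + l(D(S))$ is small enough for these cases to force an overlattice $M' \cong 2U \oplus L'(-1)$. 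By Lemma \ref{Lem:reductionMa}, $M'$ remains $2$-reflective, so $L'$ must appear in the list of Theorem \ref{th:main2reflective}.

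The hard step will be the final verification: for each of the $12$ small-rank bad lattices and each admissible $L'$ at the matching rank, ruling out any finite-index embedding $M \hookrightarrow 2U \oplus L'(-1)$. The main arithmetic tool is the discriminant identity $[M':M]^2 \cdot |D(L')| = m^2 \cdot |D(S)|$, which forces $|D(L')|/|D(S)|$ to be the square of a rational of constrained denominator and eliminates most candidates at once. When discriminants are compatible, the root system constraints of Theorem \ref{th:2-reflective}, namely that $R(L')$ spans $L' \otimes \RR$ with equal Coxeter numbers on its non-$A_1$ irreducible components, must agree with the negative root lattice of $S$; since each bad $S$ contains a root system violating these constraints when matched against any candidate $L'$, no correction can exist. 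This finite but delicate case analysis is the technical heart of the argument and the step most likely to require a careful bookkeeping of genus invariants.
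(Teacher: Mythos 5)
Your existence half is essentially the paper's: the $51$ good lattices are exactly those $S=U\oplus L(-1)$ with $L$ on the list of Theorem \ref{th:main2reflective}, and the forms come from the quasi pull-backs of \S\ref{sec:quasipullback} (note that an automorphic correction only requires the $2$-reflective form to exist, so the Weyl-vector/Borcherds step you invoke is not needed here). Your treatment of the six bad lattices of rank $\geq 14$ via Theorem \ref{th:non12} is also fine, since the three surviving signatures $(2,18)$, $(2,19)$, $(2,26)$ cannot equal $(2,\operatorname{rank}S)$ for those $S$.

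For the twelve bad lattices of rank $7$--$13$, however, your argument has a genuine gap, and you have missed the one-line reduction the paper uses: $U(m)$ embeds into $U$ with index $m$, so $U(m)\oplus S$ is a finite-index sublattice of $U\oplus S$, and Lemma \ref{Lem:reductionMa} immediately gives that $2$-reflectivity of $U(m)\oplus S$ for \emph{any} $m$ forces $2$-reflectivity of $U\oplus S=2U\oplus R(-1)$ with $R$ the explicit root part of $S$; Theorem \ref{th:main2reflective} then decides every case at once. Your substitute -- realizing $U(m)\oplus S$ or an overlattice as $2U\oplus L'(-1)$ and matching $L'$ against the classification -- fails at two points. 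First, you assert that the root-system constraints on $L'$ "must agree with the negative root lattice of $S$", but the $2$-root system of $L'$ is not preserved under such isomorphisms: the paper's own identification $U\oplus U(2)\oplus 2D_4(-1)\cong 2U\oplus N_8(-1)$ turns $2D_4$ into $8A_1$, so a genus/discriminant match does not let you read off $R(L')$ from $S$. Second, "$L'$ must appear in the list of Theorem \ref{th:main2reflective}" includes case (b), the lattices whose whole genus has no $2$-roots; this class is \emph{not} classified in the paper (it is posed as an open problem in \S\ref{sec:open}), so your finite discriminant bookkeeping cannot exclude it. Only by passing to the overlattice $U\oplus S=2U\oplus R(-1)$, where $R$ is a genuine root lattice and hence case (b) is vacuous, does the argument close.
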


\begin{proof}
If $U(m)\oplus S$ is $2$-reflective, then $U\oplus S$ is also $2$-reflective. We then prove the result by Theorem \ref{th:2-reflective} (b) and (c). The automorphic corrections of $S$ can be found in \S \ref{sec:quasipullback} and \cite{GN18}.
\end{proof}

For $2\leq n \leq 4$, there are a lot of hyperbolic 2-reflective lattices not of type $U\oplus L(-1)$. Our argument does not work well in this case.

\begin{remark}
It is possible to use the classification of hyperbolic $2$-reflective lattices to prove Theorem \ref{th:main2reflective}. The Weyl vector of a Borcherds product is given by $(A,\vec{B},C)$ in Theorem \ref{th:BorcherdsJF}. For a $2$-reflective modular form with respect to the lattice of type $2U\oplus mA_1\oplus L_0$ with $m\geq 0$ and $L_0\neq \emptyset$ (see notations in Theorem \ref{th:2-reflective}), we have 
$$
(A,\vec{B}, C)=\left( h+1, \sum \rho_i + \frac{h-1}{2}\sum \alpha_j, h  \right),
$$
where $\rho_i$ is the Weyl vector of the irreducible components of the root sublattice of $L_0$ and $\alpha_j$ are the positive roots of $mA_1$.  We thus calculate the norm of the Weyl vector as
$$
2AC-(\vec{B},\vec{B})=\frac{h(h+1)}{12} \left( 24- n-5m + \frac{6m(3h-1)}{h(h+1)} \right).
$$
If the Weyl vector has positive norm, then the lattice $U\oplus mA_1\oplus L_0$ is hyperbolic $2$-reflective. 
We can show that for almost all lattices determined by Theorem \ref{th:2-reflective} the norm of Weyl vectors are positive. For example, when $m=0$ and $n<24$, we have $2AC-(\vec{B},\vec{B})=\frac{h(h+1)}{12}(24-n)>0$. Thus they are all hyperbolic $2$-reflective and we may use the classification of hyperbolic $2$-reflective lattices to determine $2$-reflective lattices.
\end{remark}

\begin{remark}\label{rem:pullback}
Let $L$ be a primitive sublattice of a Niemeier lattice $N(R)$. If the orthogonal complement of $L$ on $N(R)$ has 2-roots, then every reflective modular form for $2U\oplus L(-1)$ constructed as the quasi pull-back of $\Phi_{12}$ is a cusp form (see Theorem \ref{th: Borchpullback}) and then has a Weyl vector of positive norm, which yields that the corresponding Lorentzian lattice is hyperbolic reflective.

Note that the sublattices $6A_2< N(6D_4)$ and $12A_1<N(12A_2)$ do not satisfy the above assumption. By direct calculations, the Weyl vectors of the corresponding reflective modular forms have zero norm.

It is now easy to see that the lattice $U\oplus L(-1)$ are hyperbolic reflective for some $L$ in \S \ref{sec:quasipullback}, such as $L=2E_8\oplus D_4$, $2E_8\oplus 2A_1$, $2E_8\oplus A_1(2)$, $E_8\oplus D_9$, $E_8\oplus 2D_4$, $E_8\oplus D_7$, $2E_7$, $E_8\oplus D_4\oplus A_1(2)$, $11A_1$, $5A_2$, $A_5\oplus D_4$, $D_5\oplus A_2$, and so on.
\end{remark}

\section{Application: automorphic discriminants of moduli spaces of K3 surfaces}\label{sec:K3surfaces}
The moduli space of polarized K3 surfaces of degree $2n$ can be realized as the modular variety $\widetilde{\Orth}^{+}(T_n)\backslash\cD(T_n)$, where 
\begin{equation}
T_n=U\oplus U \oplus E_8(-1)\oplus E_8(-1)\oplus \latt{-2n}
\end{equation}
is an even lattice  of signature $(2,19)$. The discriminant of this moduli space is equal to the $(-2)$-Heegner divisor $\cH$. Nikulin \cite{Nik96} asked the question if the discriminant can be given by the set of zeros of some automorphic form. This question is equivalent to whether $T_n$ is 2-reflective.  Looijenga \cite{Loo03} gave the answer  that $T_n$ is not 2-reflective if $n\geq 2$.  Now, this result is immediately derived from Theorem \ref{th:2-reflective} because the set of $2$-roots of $2E_8\oplus \latt{2n}$ does not span the whole space $\RR^{17}$ when $n\geq 2$. Moreover, Theorem \ref{th:sign19} gives a generalization of this result. Nikulin \cite{Nik96} also asked the similar question for more general lattice-polarized K3 surfaces. Theorem \ref{th:non12} implies the nonexistence of such good automorphic forms for other large rank lattices. Many $2$-reflective modular forms for small rank lattices related to lattice-polarized K3 surfaces were constructed in \cite{GN17}.

As another application of our approach, we further prove the following result.

\begin{theorem}\label{th:reflective19}
The lattice $T_n$ is reflective if and only if $n=1$, $2$.
\end{theorem}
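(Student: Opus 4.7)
The ``if'' direction is constructive. For $n=1$ we have $T_1 \cong 2U\oplus 2E_8(-1)\oplus A_1(-1)$, which is $2$-reflective by Theorem \ref{th:sign19}, hence reflective. For $n=2$, observe that $T_2 = 2U\oplus\bigl(2E_8\oplus \langle 4\rangle\bigr)(-1)$ and $\langle 4\rangle \cong A_1(2)$. The positive-definite lattice $2E_8\oplus A_1(2)$ embeds primitively into the Niemeier lattice $3E_8$ with $K_0 = 2E_8$ and $K_1 = A_1(2)$ sitting inside the third copy of $E_8$. This configuration satisfies the hypotheses of the second argument in Section \ref{sec:quasipullback}, so the quasi pull-back $\Phi_{12}|_{T_2}$ of the Borcherds form along this embedding is a reflective modular form on $T_2$.

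For the ``only if'' direction, suppose $F$ is a reflective modular form on $T_n$ with $n\geq 3$, and derive a contradiction. By \cite{Bru14} and the isomorphism between vector-valued modular forms and Jacobi forms, $F = \Borch(\phi)$ for a weakly holomorphic $\phi\in J^!_{0,L}$ with $L = 2E_8\oplus \langle 2n\rangle$, and the singular Fourier coefficients of $\phi$ correspond to multiplicities of reflective Heegner divisors on $T_n$. First classify the reflective orbits: a primitive $v\in T_n$ with $v^2 = -2d$ is reflective iff $\div(v) \in \{d,2d\}$, which forces $\div(v)\mid 2n$. Since $D(T_n) = \ZZ/(2n)\ZZ$ with quadratic form $-j^2/(4n)\pmod{2\ZZ}$, the congruence $-2d/\div(v)^2 \equiv -k^2/(2n)\pmod{2\ZZ}$ on discriminant classes eliminates nearly every candidate, leaving only the $2$-root orbit $\cH_0$ together with a short list of orbits all of norm $v^2 = -2n$ (realized by $v=e$ with $\div=2n$, and, when $n$ admits proper factorizations, by a few translates by the two hyperbolic planes with $\div = d\mid n$).

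Next apply Lemma \ref{Lem:q^0-term} to $\phi$. Evaluating the vector system identity \eqref{eq:vectorsystem} at $\mathfrak{z} = te \in \langle 2n\rangle\otimes\CC$ gives one linear relation among $\beta_0$ and the multiplicities $a_j$ of the additional reflective mirrors; evaluating at $\mathfrak{z}\in 2E_8\otimes\CC$ and using Proposition \ref{prop:root} (with Coxeter number $30$ for each $E_8$) gives a second. Combined with the identity \eqref{eq:q^0-term} for $C$, this expresses $C$, the weight $k = f(0,0)/2$, and every $a_j$ as explicit linear combinations of $\beta_0$, depending only on $n$.

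Now compute the Weyl vector $(A,\vec B,C)$ of $\Borch(\phi)$ in Theorem \ref{th:BorcherdsJF}. The $\langle 2n\rangle$-component of $\vec B$ is a fixed integer multiple of $e^\vee$ determined by the $a_j$, while the $2E_8$-component equals $\beta_0\,\rho_{2E_8}$. Using $\rho_{E_8}^2 = 620$ and the closed-form expressions above, one checks that the squared norm
\[
2AC \;-\; (\vec B,\vec B)_L
\]
is strictly negative for every integer $n\geq 3$ (and $\beta_0\geq 1$; the case $\beta_0 = 0$ forces all $a_j = 0$, hence $F$ constant). This shows the first Fourier-Jacobi coefficient $\psi_{L,C}$ in \eqref{FJtheta} cannot be a holomorphic Jacobi form, contradicting holomorphicity of $F$.

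The main obstacle is the arithmetic verification in the last step: one must show the inequality $2AC < (\vec B,\vec B)_L$ uniformly in $n\geq 3$, using the full list of reflective orbits for each $n$. The key simplifications are that all non-$2$-root reflective orbits have $v^2 = -2n$ (so the singular Fourier coefficients contributing live in $q^0$ along the $\langle 2n\rangle^\vee$-direction), and that the vector system identity in the $\langle 2n\rangle$-direction pins down $C$ solely in terms of $\beta_0$, reducing the problem to an explicit one-parameter inequality in $n$.
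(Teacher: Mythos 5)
Your ``if'' direction is fine and matches the paper: $T_1$ and $T_2$ come from the quasi pull-backs on $2E_8\oplus A_1<3E_8$ and $2E_8\oplus A_1(2)<3E_8$ of \S\ref{sec:quasipullback}. The ``only if'' direction, however, has a genuine gap at its central step. Write the $q^0$-term data as in the paper: $c_0$ for the $2$-roots of $2E_8$, $c_1$ for $\pm\frac{1}{n}\alpha$, $c_2$ for $\pm\frac{1}{2n}\alpha$, with $c_1+c_2\geq 0$. Lemma \ref{Lem:q^0-term} gives $4c_1+c_2=60nc_0$ and $k+c_1+c_2=132c_0$. Taking $c_0=1$, one finds $A=31$, $C=30$, $\vec B=\rho_{2E_8}+\frac{2c_1+c_2}{4n}\alpha$, hence
\begin{equation*}
2AC-(\vec B,\vec B)\;=\;1860-1240-\frac{(2c_1+c_2)^2}{8n}\;=\;620-\frac{(60n-2c_1)^2}{8n}.
\end{equation*}
The constraint $c_1+c_2\geq 0$ only forces $0\leq c_1\leq 20n$, so $(60n-2c_1)^2$ can be as small as $400n^2$ and the right-hand side can be as large as $620-50n$, which is \emph{non-negative} for all $3\leq n\leq 12$. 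Your claimed uniform inequality $2AC<(\vec B,\vec B)$ therefore fails exactly in the range that matters; it only succeeds for $n\geq 13$. This is consistent with the paper's own version of this positivity test (applied to the $\langle 2n\rangle$-factor of the theta block \eqref{FJtheta}), which likewise yields only $1240n\geq 100n^2$, i.e.\ $n\leq 12$, and is nowhere near sufficient on its own.

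A second, related gap is your claim that every non-$2$-root reflective orbit has $v^2=-2n$, so that all reflective data sits in the $q^0$-term. This is false for precisely the hardest cases: for $n=6$ the class $\frac12\alpha$ has norm $3$ and order $2$, giving a singular Fourier coefficient $q\zeta^{\pm 1/2}$ of hyperbolic norm $-1$ in the $q^1$-term, which corresponds to a reflective vector of norm $-4$ and divisor $2$ (similarly for $n=10$). The paper's proof needs three further ingredients you omit: (i) if $\phi$ has no singular coefficient of hyperbolic norm $-1$, a combination of the differential operators of Lemma \ref{Lem:differential} kills the $q^{-1}$-term and produces a weak Jacobi form of weight $4$ whose product with $\eta^8$ violates the singular weight bound $17/2$, disposing of all $n\leq 12$ except those admitting an order-$2$ reflective class of hyperbolic norm $-1$; (ii) an arithmetic check showing only $n=6,10$ admit such a class; (iii) a second differential-operator construction killing $n=6$ and $n=10$. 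Without these, the cases $3\leq n\leq 12$ remain open in your argument.
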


\begin{proof}
We have seen from \S \ref{sec:quasipullback} that $T_1$ and $T_2$  are reflective.  We next suppose that $n\geq 3$ and $T_n$ is reflective. Then there exists a weakly holomorphic Jacobi form of weight $0$ and index $1$ for $2E_8\oplus \latt{2n}$ with first Fourier coefficients of the form
$$
\phi(\tau,\mathfrak{z})=c_0 q^{-1} + c_0\sum_{\substack{r\in 2E_8\\ r^2=2}} e^{2\pi i (\mathfrak{z},r)} + c_1\zeta^{\pm \frac{1}{n}} + c_2\zeta^{\pm \frac{1}{2n}} + 2k + O(q),
$$
where $c_0$, $c_1\in \NN$, $c_2\in \ZZ$ satisfying $c_1+c_2\geq 0$, $\zeta^{\pm \frac{1}{n}}=\exp(2\pi i (\mathfrak{z}, \pm\frac{1}{n}\alpha))$, $\zeta^{\pm \frac{1}{2n}}=\exp(2\pi i (\mathfrak{z}, \pm\frac{1}{2n}\alpha))$, $\alpha$ is the basis of the lattice $\latt{2n}$ with $\alpha^2=2n$. The reason we have $c_1+c_2\geq 0$ is that it is the multiplicity of the Heegner divisor $\cH(\frac{1}{2n}\alpha, -\frac{1}{4n})$.
By Lemma \ref{Lem:q^0-term}, we get
\begin{align*}
60nc_0&=4c_1+c_2,\\
k+c_1+c_2&=132c_0.
\end{align*}
We can assume $c_0=1$. The $q^0$-term of $\phi$ defines a holomorphic Jacobi form for $2E_8\oplus \latt{2n}$ as a generalized theta block. In particular, the part related to $\latt{2n}$
$$
\eta^{2k-16}(\tau) \left( \frac{\vartheta(\tau,2z)}{\eta(\tau)} \right)^{c_1}\left( \frac{\vartheta(\tau,z)}{\eta(\tau)} \right)^{c_2}
$$
is a holomorphic Jacobi form of index $30n$ for $A_1$. Thus, the hyperbolic norm of its first Fourier coefficient should be non-negative. We calculate it as
\begin{align*}
&4\times \frac{2k-16+2c_1+2c_2}{24}\times 30n - \left( \frac{2c_1+c_2}{2} \right)^2\\
=&1240n-\left( \frac{4c_1+c_2}{6}+ \frac{c_1+c_2}{3} \right)^2\\
\leq & 1240n-100n^2,
\end{align*}
which implies that $1240n\geq 100n^2$, i.e. $n\leq 12$. The last inequality follows from $c_1+c_2\geq 0$. 

If $\phi$ has no singular Fourier coefficients of hyperbolic norm $-1$, then as in \cite{Wan18}, by using the differential operators to kill the term $q^{-1}$ (consider a linear combination of $E_4\phi$ and $H_{2}(H_{0}(\phi))$), we can construct a non-zero weak Jacobi form $\phi_4$ of weight $4$ whose hyperbolic norms of singular Fourier coefficients are $>-1$, more precisely $\geq -2/3$ (see the description of reflective vectors in \S \ref{Sec:reflective}). Then $\eta^8\phi_4$ is a holomorphic Jacobi form of weight $8$ with a character for $2E_8\oplus \latt{2n}$, which contradicts the singular weight. 

Thus $\phi$ must have singular Fourier coefficients of hyperbolic norm $-1$. When $n\leq 12$, the singular Fourier coefficients of $\phi$ are determined by its $q^{-1}$, $q^0$, $q^1$ and $q^2$-terms. Since the singular Fourier coefficients of $\phi$ should correspond to reflective divisors, the singular Fourier coefficients of hyperbolic norm $-1$ are represented by $\frac{1}{2}\alpha$ with $\frac{\alpha^2}{4}=1 (\m 2)$ because the order must be $2$.  The only possible case is $n=6$ or $10$. 

In the case $n=6$, the possible singular Fourier coefficients of $\phi$ are: $q^{-1}$, $\zeta^{\pm 1/6}$, $\zeta^{\pm 1/12}$, and $q\zeta^{\pm 1/2}$ with hyperbolic norms $-2$, $-1/3$, $-1/12$, and $-1$, respectively.  Similarly, by using differential operators to kill the terms $q^{-1}$ and $q\zeta^{\pm 1/2}$ (consider a linear combination of $E_6\phi$, $E_4 H_{0}(\phi)$ and $H_4(H_2(H_{0}(\phi)))$), we can construct a non-zero weak Jacobi form $\phi_6$ of weight $6$ with only singular Fourier coefficients of types $\zeta^{\pm 1/6}$ and $\zeta^{\pm 1/12}$. Then $\eta^4\phi_6$ gives a  holomorphic Jacobi form of weight $8$ for $2E_8\oplus \latt{12}$, which contradicts the singular weight. Therefore, $T_6$ is not reflective. 
We can prove the case $n=10$ in a similar way. The proof is complete.
\end{proof}

\section{Further remarks and open questions}\label{sec:open}
We first give some remarks about the main results.

\begin{remark}
Similar to Theorem \ref{th:main2reflective}, replacing 2-roots with root system $\mathfrak{R}_L$ (see Proposition \ref{prop:reflectivetype}), we find that there are also exactly three types of reflective lattices containing $2U$. But $\II_{2,26}$ is not the unique reflective lattice of type $(a)$. In fact, 
$$
U\oplus \text{Coxeter-Todd lattice}\cong U\oplus 6A_2 \cong U \oplus E_6\oplus E_6^\vee(3)
$$
is also a reflective lattice of type (a). Besides, the reflective lattice of type (c) may have no a reflective modular form with a positive-norm Weyl vector, for example, $2U\oplus A_6^\vee(7)$ has a unique reflective modular form and this modular form has singular weight $3$ (see \cite{GW19}). Thus, the case of reflective is different from the case of $2$-reflective. We remark that every lattice having a reflective modular form of singular weight is of type (c) except $\II_{2,26}$.
\end{remark}

\begin{remark}
Our Jacobi forms approach is also useful to study the genus of a certain lattice because one has different Jacobi forms for different lattices in some genus.
From Theorem \ref{th:main2reflective} and its proof, we conclude 
\begin{enumerate}
\item The genus of $2E_8\oplus A_1$ contains exactly $4$ lattices: itself, the unique nontrivial even overlattices of $D_{16}\oplus A_1$, $A_{17}$, and $D_{10}\oplus E_7$. 
\item For $L= 2E_8$, $5A_1\oplus D_4$, $A_1\oplus 2D_4$, $A_1\oplus D_8$, or $E_8\oplus 2A_1$,  the genus of $L$ contains exactly $2$ lattices. 
\item Let $L$ take one of the rest $44$ lattices in the table of Theorem \ref{th:main2reflective}. The genus of $L$ contains only one lattice.
\end{enumerate} 

\end{remark}

\begin{remark}
Theorem \ref{th:non12} holds for meromorphic $2$-reflective modular forms. Firstly, from its proof, we see that Theorem \ref{th:2-reflective} is still true for meromorphic $2$-reflective modular forms. Secondly, in the proofs of \cite[Theorem 3.6]{Wan18} and Theorem \ref{th:nonsign12}, we only need to make minor correction for the cases $\rank(L)=12, 13, 14$. In these cases, we need to show that the constant $u$ (determined by the weight) of holomorphic Jacobi form $\phi_6$ is not zero. This can be done using Theorem \ref{th:2-reflective}. 
\end{remark}

\begin{remark}
For any $2$-reflective lattice of type $2U\oplus L(-1)$ with $\rank(L)>6$, the corresponding $2$-reflective modular form is unique up to a constant multiple. Indeed, if there exist two independent $2$-reflective modular forms, then there would exist a weakly holomorphic Jacobi form $\psi$ of weight $0$ and index $L$ without $q^{-1}$-term. Then $\eta^6\psi$ would be a holomorphic Jacobi form of weight $3$ with a character, which contradicts the singular weight.

Similarly, for any reflective lattice of type $2U\oplus L(-1)$ with $\rank(L)>12$, the corresponding reflective modular form is unique up to a constant multiple.
\end{remark}

\begin{remark}
By \S \ref{subsec:4th argument}, $2U\oplus E_8(-1)\oplus D_7(-1)$ is reflective. Thus there exist reflective lattices of signature $(2,17)$. This answers one part of \cite[Questions 4.13 (2)]{Wan18}. 
\end{remark}

In \cite[Theorem 4.11]{Wan18}, we proved that $\II_{2,26}$ is the unique reflective lattice of signature $(2,n)$ with $n\geq 23$ up to scaling. In fact, we can also prove that  reflective lattices of signature $(2,22)$ satisfy a restrictive condition.

\begin{proposition}\label{prop:sign20}
If $M=2U\oplus L(-1)$ is a reflective lattice of signature $(2,22)$ and $F$ is a reflective modular form for $\widetilde{\Orth}^+(M)$, then the weight of $F$ is $24\beta_0$ and the divisor of $F$ is given by
\begin{equation}\label{eq:div20}
\div(F)=\beta_0 \cH + \sum_v \beta_v \cH(v,-1/2),
\end{equation}
where the sum takes over all elements of norm $-1 \mod 2$ and order $2$ in the discriminant group of $M$, $\beta_0$ and $\beta_v$ are natural numbers.
\end{proposition}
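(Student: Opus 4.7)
Proof proposal.

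The plan is to imitate the singular-weight arguments of Theorems \ref{th:nonsign12} and \ref{th:reflective19}, specialised to $\rank(L)=20$, for which the singular weight of Jacobi forms of index $L$ is $10$. By \cite{Bru14}, $F=\Borch(\phi)$ for some $\phi\in J^!_{0,L}$, and every non-zero singular Fourier coefficient $f(n,\ell)$ of $\phi$ (i.e., with $2n-(\ell,\ell)<0$) must correspond to a reflective divisor of $F$. Since a primitive $v\in M$ of norm $-2d$ is reflective iff $\div(v)\in\{d,2d\}$, the hyperbolic norms of reflective Fourier coefficients lie in the discrete set $\{-2/d,\,-1/(2d):d\geq 1\}$. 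The divisor claimed in the proposition corresponds precisely to keeping the values $-2,-1,-1/2$, arising from $d=1$ and from $(d,\div)=(2,2)$.

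First I would perform the standard reductions. For $n\leq -2$ the coefficient $f(n,0)$ would force a primitive $(-2|n|)$-vector in $M$ of divisor $1$ to be reflective, which fails for $|n|\geq 2$; hence the only non-zero $q^{n}$-coefficient with $n<0$ is $f(-1,0)=\beta_0$. For the $q^0$-term, a case analysis on order, divisibility and norm in $D(L)$ shows that the only admissible classes with $f(0,\ell)\neq 0$ are (a) $\ell\in L$ with $\ell^2=2$ (two-roots), contributing multiplicity $\beta_0$ to $\cH$, and (b) $\ell\in L^\vee$ of order $2$ with $\ell^2=1$, contributing multiplicity $\beta_v$ to $\cH(v,-1/2)$. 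The coefficients at order-$2$ classes $\mu$ of norm $1/2$ automatically vanish, because $2\mu\in L$ is then a $2$-root of $L$ whose Fourier coefficient $f(0,2\mu)=\beta_0$ already accounts for the full multiplicity of the associated $\cH_\mu\subset \cH$.

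The main step is to rule out singular Fourier coefficients at hyperbolic norms outside $\{-2,-1,-1/2\}$. Supposing such a coefficient exists at a bad hyperbolic norm $-h$, I would build, in the spirit of the proof of Theorem \ref{th:reflective19}, a $\QQ$-linear combination of weight $8$ from $\phi$ of the form
\[
\Psi=\alpha_0 E_4^2\phi+\alpha_1 E_4 H_2H_0(\phi)+\alpha_2 E_6H_0(\phi)+\alpha_3 H_4H_2H_0(\phi),
\]
using the differential operators of Lemma \ref{Lem:differential}. Since $H_k$ scales $f(n,\ell)$ by $(2n-(\ell,\ell))/2$ plus a $q$-degree-dependent correction from $G_2$, while multiplication by $E_4,E_6$ depends only on the $q$-degree, the coefficients $\alpha_i$ can be tuned by a Vandermonde-type linear system to kill every Fourier coefficient of hyperbolic norm in $\{-2,-1,-1/2\}$ while preserving the one at $-h$. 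Multiplying the result by a suitable power of $\eta$ would then produce a non-zero holomorphic Jacobi form of weight strictly less than $10$, contradicting the singular-weight bound. Iterating this argument over the finitely many potentially occurring rational hyperbolic norms $-2/d,-1/(2d)$ with $d$ bounded in terms of $\rank(L)$ (the bound coming, as in Theorem \ref{th:reflective19}, from holomorphicity of the $q^0$-term theta block), and invoking dimension bounds on weak Jacobi form spaces for the few remaining cases, eliminates every bad coefficient and establishes the divisor structure of the proposition.

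Once the divisor is identified, the weight formula $k=24\beta_0$ follows by substituting the identified $q^0$-coefficients into both identities of Lemma \ref{Lem:q^0-term} (with $\rank(L)=20$ and $\sum_{n<0}f(n,0)\sigma_1(-n)=\beta_0$), combining with the vector identity $\sum_\ell f(0,\ell)(\ell,\mathfrak{z})^2=2C(\mathfrak{z},\mathfrak{z})$, and using holomorphicity of the first Fourier--Jacobi coefficient $\psi_{L,C}$ from Theorem \ref{th:BorcherdsJF} as a Jacobi form of weight $k$ and index $L(C)$. I expect the main obstacle to be the Vandermonde cancellation step, where one must verify that among the candidate bad hyperbolic norms the scalings induced by $H_k$ and by $E_4,E_6$ are sufficiently distinct, and that the construction can be arranged so that the bad coefficient is not itself inadvertently killed; subtle cases arise when distinct reflective vectors produce Fourier coefficients at close hyperbolic norms but at different discriminant classes, which must be separated using the full class information together with the $\widetilde{\Orth}^+(M)$-invariance of $\phi$.
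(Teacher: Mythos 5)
Your overall strategy (differential operators plus a singular-weight obstruction) is in the right family, but the central step is miscalibrated and would not close. The paper's proof is much more economical: it cancels \emph{only} the hyperbolic norm $-2$ coefficients, producing a weak Jacobi form $g$ of weight $4$, and then multiplies by $\eta^{12}$ to land \emph{exactly} at the singular weight $10$ for $\rank(L)=20$. The point is not that $J_{k,L}=\{0\}$ for $k<10$, but the stronger fact that a nonzero holomorphic Jacobi form of singular weight has all its nonzero Fourier coefficients at hyperbolic norm exactly $0$; since $\eta^{12}$ shifts hyperbolic norms by $+1$, every coefficient of $g$ with hyperbolic norm in $(-1,0)$, together with the constant term of $g$, is forced to vanish in one stroke. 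This simultaneously rules out all reflective divisors other than $\cH$ and $\cH(v,-1/2)$, forces $\beta_\mu=\beta_0$ on the components $\cH_\mu\subset\cH$, and (via the vanishing constant term) yields $k=24\beta_0$. Your plan of reaching a holomorphic form of weight \emph{strictly below} $10$ fails numerically: after killing $\{-2,-1,-1/2\}$ at weight $8$, a surviving coefficient at hyperbolic norm $-2/3$ (which is a genuine reflective possibility, coming from $(-6)$-vectors of divisor $3$) forces you to multiply by at least $\eta^{8}$ to restore holomorphy, landing at weight $12$ and producing no contradiction; the fallback to ``dimension bounds on weak Jacobi form spaces'' is not an argument. (Also, $H_4H_2H_0(\phi)$ has weight $6$, not $8$.)

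Two further gaps. First, your assertion that the coefficients at order-$2$ classes $\mu$ of norm $1/2$ ``automatically vanish'' is false: by the multiplicity formula of Theorem \ref{th:BorcherdsJF}, the multiplicity of $\cH_\mu$ equals $\beta_0$ \emph{plus} the coefficient at hyperbolic norm $-1/2$ in class $\mu$, so the statement $\beta_\mu=\beta_0$ is part of what the proposition asserts and must be proved (in the paper it again falls out of the singular-weight-$10$ argument). Second, the weight formula does not follow from Lemma \ref{Lem:q^0-term} together with holomorphicity of $\psi_{L,C}$: substituting the identified $q^0$-term into \eqref{eq:q^0-term} only gives a relation of the shape $k=12\beta_0+\tfrac{1}{10}\beta_0\abs{R_L}-\tfrac{1}{5}R_1$ (this is exactly the content of the corollary following the proposition), and one needs the independent input that the constant term of $g$ vanishes to conclude $k=24\beta_0$.
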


\begin{proof}
The proof is an improvement to the proof of \cite[Theorem 4.11]{Wan18}.
Let $\phi$ be the associated weakly holomorphic Jacobi form of weight $0$. Applying differential operators to $\phi$, we construct a weak Jacobi form $g$ of weight $4$ by canceling the singular Fourier coefficients of hyperbolic norm $-2$. 
If $F$ has other type of divisors, then the Jacobi form $g$ of weight 4 will have singular Fourier coefficients of hyperbolic norm $>-1$.  But $\eta^{12}g$ is a holomorphic Jacobi form of singular weight $10$ with a character. This contradicts the fact that the hyperbolic norm of non-zero Fourier coefficients of any holomorphic Jacobi form of singular weight is always zero. This also forces that the constant term of $g$ is zero, which yields  $k=24\beta_0$.
\end{proof}

\begin{corollary}
Assume that $2U\oplus L(-1)$ is a reflective lattice of signature $(2,22)$. Let $R_L$ denote the set of $2$-roots in $L$. We set
$$
R_1(L)=\{v\in L^\vee: (v,v)=1,\ 2v\in L  \}.
$$
Then $\abs{R_L}\geq 120$ and the set $R_L\cup R_1(L)$ generates the vector space  $\RR^{20}$. 
\end{corollary}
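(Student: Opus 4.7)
The plan is to read off the full list of nonzero singular Fourier coefficients of the weight-$0$ Jacobi form $\phi$ of index $L$ attached (via Borcherds' correspondence) to a reflective modular form $F$ on $M$, and then apply the two identities of Lemma \ref{Lem:q^0-term} to $\phi$. By Proposition \ref{prop:sign20}, $F$ has weight $24\beta_0$ with $\beta_0\ge 1$ and divisor $\beta_0\mathcal{H}+\sum_v \beta_v\mathcal{H}(v,-1/2)$, so I first enumerate which $(n,\ell)$ contribute. Besides $f(-1,0)=\beta_0$, $f(0,r)=\beta_0$ for $r\in R_L$, and $f(0,s)=\beta_{\bar s}$ for $s\in R_1(L)$, I expect the decomposition $\mathcal{H}=\mathcal{H}_0+\sum_{\mu\in\pi_M}\mathcal{H}_\mu$ from (6.1) to force an additional contribution $f(0,s)=\beta_0$ for every $s\in R_{1/2}(L):=\{s\in L^\vee:(s,s)=1/2,\ 2s\in L\}$; a quick hyperbolic-norm check rules out any further singular Fourier coefficients.

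Given this list, the lower bound $|R_L|\ge 120$ should fall out of \eqref{eq:q^0-term} by direct computation. Setting $\rank(L)=20$ and $N:=\sum_{s\in R_1(L)}\beta_{\bar s}$, the two expressions for $C$ become
\begin{equation*}
C=\frac{24\beta_0+\beta_0|R_L|+\beta_0|R_{1/2}(L)|+N}{24}=\frac{2\beta_0|R_L|+\tfrac{\beta_0}{2}|R_{1/2}(L)|+N}{40},
\end{equation*}
and clearing denominators yields the clean relation
\begin{equation*}
|R_L|=120+\frac{7}{2}|R_{1/2}(L)|+\frac{2N}{\beta_0}.
\end{equation*}
Non-negativity of each summand on the right then delivers $|R_L|\ge 120$.

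For the spanning statement I plan to use identity \eqref{eq:vectorsystem} together with the key observation that $R_{1/2}(L)\subset\tfrac12 R_L$: indeed any $s$ with $(s,s)=1/2$ and $2s\in L$ satisfies $(2s,2s)=2$, so $2s\in R_L$ and $s$ already lies in the $\RR$-span of $R_L$. Suppose $R_L\cup R_1(L)$ fails to span $L\otimes\RR$ and pick $\mathfrak{z}\ne 0$ in its orthogonal complement; this $\mathfrak{z}$ is automatically orthogonal to $R_{1/2}(L)$ as well. Then every surviving term on the left of \eqref{eq:vectorsystem} vanishes, which forces $C(\mathfrak{z},\mathfrak{z})=0$ and hence $C=0$ since $L$ is positive-definite. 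But the formula above, combined with $|R_L|\ge 120>0$, gives $C\ge 6\beta_0>0$, a contradiction.

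The main obstacle is really just careful bookkeeping in the first step: one must not overlook the $R_{1/2}(L)$ coefficients coming from the sub-divisors $\mathcal{H}_\mu$ inside $\mathcal{H}$, since dropping them would corrupt the arithmetic above and also would leave a possible gap in the spanning argument. Once the enumeration is complete and the inclusion $R_{1/2}(L)\subset\tfrac12 R_L$ is noted, both assertions of the corollary drop out of the two identities of Lemma \ref{Lem:q^0-term} with essentially no further input.
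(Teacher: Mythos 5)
Your overall strategy is the paper's: read off the singular Fourier coefficients of the weight-$0$ Jacobi form $\phi$ from Proposition \ref{prop:sign20} and feed them into the two identities of Lemma \ref{Lem:q^0-term}. But your enumeration contains a genuine error, located exactly at the step you single out as the key bookkeeping: the claimed contribution $f(0,s)=\beta_0$ for $s\in R_{1/2}(L)$ is wrong — these coefficients are $0$. There are two independent ways to see this. First, it double-counts. A component $\cD_u$ of $\cH$ with $(u,u)=-2$ and $\div(u)=2$ corresponds to the primitive vector $u/2$ of norm $-1/2$, and by the multiplicity formula of Theorem \ref{th:BorcherdsJF} its multiplicity is $f(0,s)+f(0,2s)=f(0,s)+\beta_0$, because the $d=2$ term is already a $2$-root coefficient. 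For this to equal the multiplicity $\beta_0$ prescribed by the summand $\beta_0\cH$ in \eqref{eq:div20}, one needs $f(0,s)=0$; equivalently, as \eqref{eq:divisor}--\eqref{F3} make explicit, the primitive coefficient attached to $\cH_\mu$ records the excess $\beta_\mu-\beta_0$, not $\beta_\mu$. Second, your claim contradicts the very proposition you invoke: the proof of Proposition \ref{prop:sign20} shows that $\phi$ has no singular Fourier coefficient of hyperbolic norm strictly between $-1$ and $0$ (such a coefficient would survive in $\eta^{12}g$ as a nonzero Fourier coefficient of nonzero hyperbolic norm in a holomorphic Jacobi form of singular weight), and $-1/2$ lies in that forbidden range.

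By good fortune the error is harmless for the two assertions of the corollary: the spurious term enters your identity with the positive coefficient $\tfrac{7}{2}$, so $\abs{R_L}\geq 120$ still follows, and since $R_{1/2}(L)\subset\tfrac12 R_L$ the spanning argument is unaffected. Nevertheless the displayed relation $\abs{R_L}=120+\tfrac72\abs{R_{1/2}(L)}+\tfrac{2N}{\beta_0}$ is not correct whenever $R_{1/2}(L)\neq\emptyset$. With the coefficients fixed, \eqref{eq:q^0-term} gives exactly the paper's identity
\begin{equation*}
\abs{R_L}=120+\frac{2}{\beta_0}\sum_{v\in R_1(L)}\beta_v ,
\end{equation*}
and then $C=\frac{1}{40}\bigl(2\beta_0\abs{R_L}+\sum_v\beta_v\bigr)\geq 6\beta_0>0$ together with \eqref{eq:vectorsystem} yields the spanning statement precisely as you argue in your last step.
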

\begin{proof}
Suppose that $F$ is a reflective modular form for the lattice. Then $F$ has weight $24\beta_0$ and its divisor is of the form \eqref{eq:div20}. We denote the associated weakly holomorphic Jacobi form of weight $0$ and index $L$ by $\phi$. We define $R=\abs{R_L}$ and $R_1=\sum_{v\in R_1(L)} \beta_v$. By \eqref{eq:q^0-term}, we have
$$
\frac{1}{24}\left( \beta_0(R + 48) + R_1  \right) - \beta_0= \frac{1}{40}(2\beta_0R+R_1),
$$
which yields 
$$
R=120+\frac{2}{\beta_0}R_1.
$$
Then we have $R\geq 120$. The second assertion follows from \eqref{eq:vectorsystem}.
\end{proof}

In the end, we pose some interesting questions related to our work. Questions (1) and (2) have been formulated in \cite{Wan18}. But here, we further make conjectures and provide some evidence to support them.

\begin{enumerate}
\item Are there $2$-reflective lattices of signature $(2,13)$? 

By Theorem \ref{th:main2reflective}, there is no $2$-reflective lattice of signature $(2,13)$ which can be represented as $2U\oplus L(-1)$ satisfying that $L$ has $2$-roots.
If $M=2U\oplus L(-1)$ is a $2$-reflective lattice of signature $(2,13)$, then every lattice in the genus of $L$ has no 2-root. It is very possible that such $L$ does not exist. This suggests us that there might be no $2$-reflective lattice of signature $(2,13)$.

\item Are there reflective lattices of signature $(2,21)$? 

By \cite{Ess96}, there is no hyperbolic reflective lattice of signature $(1,20)$. In view of the relation between reflective modular forms and hyperbolic reflective lattices, we conjecture that the above question has a negative answer.

\item Let $2U\oplus L(-1)$ be a reflective lattice of signature $(2,22)$.  Is $L$ in the genus of $2E_8\oplus D_4$?

By \cite{Ess96}, $U\oplus 2E_8\oplus D_4$ is the unique maximal hyperbolic reflective lattice of signature $(1,21)$.
This result and Proposition \ref{prop:sign20} indicate that the answer may be positive.

\item Classify all $2$-reflective lattices of type $2U\oplus L(-1)$ satisfying that every lattice in the genus of $L$ has no $2$-roots. Corresponding to Theorem \ref{th:complete2reflective}, we conjecture that if the lattice $2U\oplus L(-1)$ has a modular form with complete $2$-divisor and $L$ has no 2-root then $L$ is a primitive sublattice of Leech lattice satisfying the $\norm_2$ condition.

\item Are Borcherds products of singular weight reflective? 

This question was mentioned in the introduction of \cite{Sch17}. At present, all known Borcherds products of singular weight are reflective except some pull-backs. For example, the Borcherds modular form $\Phi_{12}$ for $\II_{2,26}=2U\oplus 3E_8$ is a Borcherds product of 
$$
\frac{\Theta_{3E_8}(\tau,\mathfrak{z})}{\Delta(\tau)}=q^{-1}+24+\sum_{\substack{v\in 3E_8\\v^2=2}}e^{2\pi i (v,\mathfrak{z})}+O(q)\in J_{0,3E_8}^{!}.
$$
It is reflective and of singular weight. We consider the pull-back of $\frac{\Theta_{3E_8}}{\Delta}$ on the lattice $3D_8<3E_8$, which gives a Borcherds product of singular weight. By \cite[Theorem 4.7]{Wan18}, it is not reflective. Therefore, we suggest formulating the following conjecture.
\begin{conjecture}
Let $F$ be a Borcherds product of singular weight for a lattice $M$ of signature $(2,n)$ with $n\geq 3$. Then there exists an even lattice $M'$ such that $M'\otimes \QQ=M\otimes\QQ$ and $F$ can be viewed as a reflective modular form for $M'$. 
\end{conjecture}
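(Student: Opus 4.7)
The plan is to reduce the conjecture to a lattice-theoretic construction of $M'$ from the divisor data of $F$. Write $F = \Borch(\varphi)$ with $\varphi \in J_{0,L}^!$ for $M = 2U \oplus L(-1)$ at a chosen one-dimensional cusp. The singular weight condition $f(0,0)/2 = n/2-1$ gives $f(0,0) = \rank(L)$, and by Lemma~\ref{Lem:q^0-term} the finite set $X = \{\ell \in L^\vee : f(0,\ell) \neq 0\}$ forms a vector system with $C = 1$. By Theorem~\ref{th:BorcherdsJF}, the divisor $\div(F)$ is a locally finite sum of rational quadratic divisors indexed by the singular Fourier coefficients $f(n,\ell)$ with $2n - (\ell,\ell) < 0$.

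First, I would enumerate the rational hyperplanes $H$ appearing in $\div(F)$ and for each pick a primitive $v \in M^\vee$ with $H = v^\perp \cap \cD(M)$. The goal is to find an even lattice $M'$ with $M' \otimes \QQ = M \otimes \QQ$ such that the primitive vector of $M'$ along each such line is reflective in $M'$, i.e.\ satisfies $2(v', M') \subseteq (v', v')\ZZ$. A natural candidate is $M' = 2U \oplus L'(-1)$, where $L'$ is obtained from $L$ by adjoining into $L \otimes \QQ$ those rational vectors which rescale the divisor lines into the required divisibility pattern ($(v')^2 = -2d$ with $\div(v') \in \{d, 2d\}$). For the pull-back of $\Phi_{12}$ from $\II_{2,26}$ to $2U \oplus 3D_8(-1)$, the singular Fourier coefficients of norm $-2$ project $(-2)$-vectors of $\II_{2,26}$ outside $3D_8^\vee$, and adjoining these projections recovers $3E_8 \supset 3D_8$, confirming $M' = \II_{2,26}$ in this example.

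The next step is to verify that the construction is finite (so $M'$ is a well-defined even lattice) and compatible, i.e.\ every reflection $\sigma_{v'}$ arising from a divisor component preserves $M'$. One expects this to follow from the vector-system identity~\eqref{eq:vectorsystem} combined with the theta-block expression~\eqref{FJtheta} for the holomorphic first Fourier-Jacobi coefficient of $F$ and the Weyl-vector data $(A, \vec{B}, C)$ from Theorem~\ref{th:BorcherdsJF}. The modular transformation of $F$ under $\widetilde{\Orth}^+(M')$ should then lift from that under $\widetilde{\Orth}^+(M)$ up to a character correction, since these two groups are commensurable.

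The hard part is controlling the singular Fourier coefficients $f(n, \ell)$ with $n > 0$: they contribute divisors whose reflectivity in $M'$ is not directly dictated by the vector system $X$, and must be handled by a finer mechanism. A promising line of attack is to expand $F$ at every one-dimensional cusp, extract the corresponding vector-system data at each, and cross-reference with the singular-weight classification of Scheithauer \cite{Sch06, Sch17} and Dittmann \cite{Dit18} in the squarefree level case. Verifying the conjecture directly in the known cases should reveal the structural feature that governs the construction, which one can then hope to promote to a uniform statement linking the principal part of the vector-valued input of the Borcherds lift to a canonical choice of $M'$ inside $M \otimes \QQ$.
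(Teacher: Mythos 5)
The statement you are addressing is posed in the paper as an open \emph{conjecture} (item (6) of the final section); the paper gives no proof and only cites supporting evidence, namely Scheithauer's result that $\Phi_{12}$ is the unique Borcherds product of singular weight on unimodular lattices \cite{Sch17} and the classifications on simple lattices in \cite{DHS15, OS18}, in all of which the singular-weight products happen to be reflective. So there is no proof in the paper to compare against, and your proposal must stand on its own as an attack on an open problem. It does not close it.

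The concrete gaps are these. (i) You work throughout with $M=2U\oplus L(-1)$, but the conjecture concerns an arbitrary even lattice of signature $(2,n)$, which need not split off two hyperbolic planes; and reflectivity of a given divisor is not preserved under passing to overlattices or finite-index sublattices (see the remark following Lemma 3.5 in the paper), so this is not a harmless normalization. (ii) The central construction --- adjoining to $L$ those rational vectors that ``rescale the divisor lines into the required divisibility pattern'' --- is never shown to yield an even lattice: the $\ZZ$-span of $L$ and these vectors may fail to be even or integral, and there is no argument that the corrections demanded by different components of $\div(F)$ are simultaneously realizable by a single $M'$, nor that each reflection $\sigma_{v'}$ actually preserves the resulting $M'$. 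The $3D_8\subset 3E_8$ example is consistent with the idea but is not a mechanism. (iii) You explicitly leave the singular coefficients $f(n,\ell)$ with $n>0$ unhandled, and proposing to ``cross-reference'' with the squarefree-level and simple-lattice classifications is circular, since those cover only special cases and the conjecture is interesting precisely outside them. (iv) A smaller but real error: the constant $C$ of Lemma \ref{Lem:q^0-term} is not $1$ for a singular-weight input; for $\Phi_{12}$ at the cusp $3E_8$ one computes $C=30$, and in general $C$ is the last component of the Weyl vector in Theorem \ref{th:BorcherdsJF}, not a quantity you may normalize to $1$. In short, what you have is a plausible research programme whose hardest steps are exactly the ones left open, which matches the status the paper assigns to the statement.
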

The first step towards this conjecture was due to Scheithauer. It is known that $\Phi_{12}$ is the unique reflective modular form of singular weight for unimodular lattices. In \cite[Theorem 4.5]{Sch17}, Scheithauer proved that $\Phi_{12}$ is the unique Borcherds product of singular weight for unimodular lattices. This means that the above conjecture is true for unimodular lattices. Besides, in \cite{DHS15, OS18} the authors gave a classification of Borcherds products of singular weight on simple lattices. All Borcherds products of singular weight in their papers are reflective, which aslo supports the conjecture. 
\end{enumerate}

\bigskip

\noindent
\textbf{Acknowledgements} 
The work was substantially done when the author visited the Laboratory of Mirror Symmetry NRU HSE in October and November 2018. The author is grateful to the laboratory for its invitation and  hospitality. The author would like to thank Valery Gritsenko for helpful discussions and constant encouragement. The author is supported by the Institute for Basic Science (IBS-R003-D1). The author thanks the referee for careful reading and valuable comments.

\bibliographystyle{amsalpha}

\end{document}